\theoremstyle{theorem}
\newtheorem{Def}{Definition}[section]
\newtheorem{Prop}[Def]{Proposition}
\newtheorem{Lem}[Def]{Lemma}
\newtheorem{Thm}[Def]{Theorem}
\newtheorem{Ass}[Def]{Assumption}
\newtheorem{Hyp}[Def]{Hypothesis}
\theoremstyle{definition}
\newtheorem{Rem}[Def]{Remark}
\newcommand{\p}{\mathbb{P}}
\newcommand{\e}{\mathbb{E}}
\newcommand{\real}{\mathbb{R}}
\newcommand{\n}{\mathbb{N}}
\newcommand{\1}{{\bf 1}}
\newcommand{\rd}{\mathrm{d}}
\newcommand{\x}{\textbf{x}}
\newcommand{\y}{\textbf{y}}
\newcommand{\J}{\textbf{J}}
\newcommand{\g}{\textbf{g}}
\newcommand{\ochi}{\overline{\chi}}
\begin{document}

\title{
Semi-implicit Euler-Maruyama approximation for non-colliding particle systems 
}
\author{
	Hoang-Long Ngo\footnote{Hanoi National University of Education, 136 Xuan 
		Thuy - Cau Giay - Hanoi - Vietnam, email: ngolong@hnue.edu.vn}
	$\quad $ and $\quad$ 
	Dai Taguchi\footnote{Osaka University, 1-3, Machikaneyama-cho, Toyonaka, Osaka 560-8531, Japan, email: dai.taguchi.dai@gmail.com}
}
\maketitle
\begin{abstract}
We introduce a semi-implicit Euler-Maruyama approximation which preservers the non-colliding property for some class of non-colliding particle systems such as Dyson Brownian motions, Dyson-Ornstein-Uhlenbeck processes and Brownian particles systems with nearest neighbour repulsion, and study its rates of convergence in both $L^p$-norm and  path-wise sense. \\
\textbf{2010 Mathematics Subject Classification}: 60H35; 41A25; 60C30\\
\textbf{Keywords}:
Implicit Euler-Maruyama approximation $\cdot$
non-colliding particle systems$\cdot$
rate of convergence
\end{abstract}


\section{Introduction}

Let $X=(X(t)=(X_1(t),\ldots,X_d(t))^{*})_{t \geq 0}$ be a solution of the following system of stochastic differential equations (SDEs)
\begin{align} \label{def:X}
	\mathrm{d}X_i(t)
	=\left\{
	\sum_{j \not = i}\frac{\gamma_{i,j}}{X_i(t) - X_j(t)}
	+ b_i(X(t) )
	\right\}dt
	+ \sum_{j=1}^d \sigma_{i,j}(X(t))\mathrm{d}W_j(t),
	~i=1,\ldots,d,
\end{align}
with $X(0) \in \Delta_d=\{\x=(x_1,\ldots,x_d)^{*} \in \real^d: x_1 < x_2 <\cdots < x_d\}$,  $\gamma_{i,j} = \gamma_{j,i} \geq 0$, and $W=(W(t)=(W_1(t),\ldots,W_d(t))^{*})_{t \geq 0}$  a $d$-dimensional standard Brownian motion defined on a probability space $(\Omega,\mathcal{F},\p)$ with a filtration $(\mathcal{F}_t)_{ t \geq 0}$ satisfying the usual conditions.

The systems of SDEs \eqref{def:X} are used to model the stochastic evolution of $d$ particles with electrostatic repulsion and restoring force. An interesting feature of these systems is their deep connection with the theory of eigenvalue distribution of  randomly-diffusing symmetric matrices and Jack symmetric polynomials (see \cite{Dyson, Bru89, Bru90, Katori, Ra}). 
The existence and uniqueness of  a strong non-colliding solution to such kind of systems have been studied intensively by many authors (see \cite{RogerShi, CepaLepingle, GrMa14, Inukai} and the references therein). However, there are still few results on the numerical approximation for these systems, in spite of their practical importance. To the best of our knowledge, the paper of Li and Menon \cite{LiMe} is the only work in this direction. These authors introduced an explicit tamed Euler-Maruyama approximation for Dyson Brownian motion and studied its consistency via a couple of numerical experiments. However, their scheme, unfortunately, does not preserve the non-colliding property of solution, which is an important characteristic of the Dyson Brownian motion.

Many authors have studied the numerical approximation for one-dimensional SDEs with boundary (Bessel process and Cox-Ingersoll-Ross (CIR) process).
Dereich, Neuenkirch, and Szpruch \cite{DeNeSz} introduced an implicit Euler-Maruyama scheme for CIR process and showed that the rate of convergence is $1/2$.
That result was extended to one-dimensional SDEs with boundary condition by Alfonsi \cite{Al} and Neuenkirch and Szpruch \cite{NeSz}. It was proved that if the drift coefficient is one-sided Lipschitz and  smooth, then the implicit Euler-Maruyama scheme is well defined and converges to the unique solution in $L^p$ sense with convergence rate of order  $1/2$ or $1$ provided that the boundaries are not accessible. In the case of CIR and Bessel processes with accessible boundaries, the rates of strong convergence of discrete approximation schemes may be very slow (see Hutzenthaler et. al \cite{HJN} and Hefter and Jentzen \cite{HJ}).  It should be noted that if we consider $d=2, \ b_i = 0$ and $(\sigma_{i,j})_{1\leq i,j \leq d}$ is a diagonal and constant matrix, then $X_2 - X_1$ is a Bessel process. The numerical approximation for multidimensional SDEs with boundary has been studied by Gy\"ongy \cite{Gyongy} and Jentzen et. al  \cite{JKN}. These authors introduced various explicit and implicit Euler-Maruyama schemes and studied their convergence in the path-wise sense. 

The main aims of this paper are to introduce a numerical approximation method which preserves the non-colliding property of solution to the system \eqref{def:X} and to study its strong rate of convergence both in $L^p$-norm and in path-wise sense. To the best of our knowledge, this is the first paper to discuss the strong rate of approximation for multidimensional stochastic differential equations whose solution stays in a domain. Note that the singular coefficients $\frac{1}{X_i - X_j}$ make the system difficult to deal with.  In order to overcome this obstacle, we need an upper bound for both moments and inverse moments of $X_i - X_j$. 

The remainder of this paper is organised as follows. In the next section, we introduce a semi-implicit Euler-Maruyama approximation $X^{(n)}$ for equation \eqref{def:X} and study its consistency. More precisely, we first show  the rate that $X^{(n)}$ converges to $X$ is of order $1/2$ in the path-wise sense. Then under some key conditions on the integrability of $X$, we show that the rate is of order almost $1/2$ in the $L^p$-norm. Finally, under further conditions on the regularity of $b_i$, we show that the rate is of order $1$ in the $L^p$-norm. In Section \ref{Sec:moment}, we study some generalized classes of interacting Brownian particle systems and Brownian particles with nearest neighbour repulsion. We first show the existence and uniqueness for the solution of these systems and then we show that the solution satisfies the key integrability condition which allows us to obtain the rates of convergence of $X^{(n)}$. In the Appendix, we discuss how to compute the implicit scheme in some particular cases.

\section{Approximation for non-colliding processes}\label{sec_semi_EM} \label{Sec:2}

Throughout this paper, we suppose that the following assumptions hold. 
\begin{Ass}\label{Ass_1}
	\begin{enumerate}
		\item[(A1)] $X(0) \in \Delta_d$ almost surely. 
		\item[(A2)] The parameters $\gamma_{i,j}$ are non-negative constants satisfying $\gamma_{i,j} = \gamma_{j,i}$ for $i,j =1, \ldots,d$ with $i \neq j$ and $\gamma_{i,i+1}>0$ for $i=1,\ldots,d-1$.
		\item[(A3)] The  coefficients $b_i:\real^d \to \real$, $i=1,\ldots, d$ are globally Lipschitz continuous, that is,
		\begin{align*}
			\|b\|_{Lip}
			:= \sup_{i=1,\ldots,d} \sup_{x\not = y} \frac{|b_i(x) - b_i(y)|}{|x-y|}
			< \infty.
		\end{align*}
		\item [(A4)] The coefficients $\sigma_{i,j}:\real^d \to \real$, $i,j=1,\ldots,d$ are globally Lipschitz continuous and bounded, that is,
		\begin{align*}
			\|\sigma\|_{Lip}&:= \sup_{i,j=1,\ldots,d} \sup_{x\not = y} \frac{|\sigma_{i,j}(x) - \sigma_{i,j}(y)|}{|x-y|}
			< \infty,\\
			\sigma_d^2
			&:=\sup_{i=1,\ldots,d} \sup_{x \in \real^d} \sum_{k=1}^d \sigma_{i,k}(x)^2<\infty.
		\end{align*}
	\end{enumerate}
\end{Ass}

\subsection{Explicit Euler-Maruyama scheme}

Let us first consider the explicit Euler-Maruyama approximation for non-colliding particle system \eqref{def:X} which is defined by $\widetilde{X}^{(n)}(0)=X(0)$ and for $t \in (0,T]$ and $i = 1, \ldots, d,$ 
\begin{align*}
	\mathrm{d}\widetilde{X}_i^{(n)}(t)
	&=\left\{\sum_{j \not = i}\frac{\gamma_{i,j}}{\widetilde{X}_i^{(n)}(\eta_n(t))-\widetilde{X}_j^{(n)}(\eta_n(t))} +b_i\left(\widetilde{X}^{(n)}(\eta_n(t))\right) \right\}\mathrm{d}t\\
	&\quad+ \sum_{j=1}^{d}\sigma_{i,j}\left(\widetilde{X}^{(n)}(\eta_n(t))\right) \mathrm{d} W_j(t),
\end{align*}
where $\eta _n(s) = kT/n=:t_k^{(n)}$ if $ s \in \left[kT/n, (k+1)T/n \right)$.
For $X(0) \in \Delta_d$, the explicit Euler-Maruyama scheme is well-defined. 
Since, for each $i=1,\ldots, {d-1}$,  the quantity
\begin{align*}
	&\widetilde{X}^{(n)}_{i+1}(t_1^{(n)}) - \widetilde{X}_i^{(n)}(t_1^{(n)})\\
	&= \widetilde{X}_{i+1}(0)-\widetilde{X}_{i}(0)\\
	&\quad+\left\{
	\sum_{k \neq i+1}
	\frac{\gamma_{i+1,k}}{\widetilde{X}_{i+1}(0)-\widetilde{X}_{k}(0)}
	-\sum_{k \neq i}
	\frac{\gamma_{i,k}}{\widetilde{X}_{i}(0)-\widetilde{X}_{k}(0)}
	+b_{i+1}(\widetilde{X}(0))-b_{i}(\widetilde{X}(0))
	\right\} \frac{T}{n}\\
	&\qquad +\sum_{j=1}^{d} \left\{ \sigma_{i+1,j}(\widetilde{X}(0))-\sigma_{i,j}(\widetilde{X}(0)) \right\}W_{j}(t_1^{(n)})
\end{align*}
is normally distributed provided that $\sigma_{i+1,j}(\widetilde{X}(0)) \not = \sigma_{i,j}(\widetilde{X}(0))$. This implies   
\begin{align*}
	\p\Big(\widetilde{X}^{(n)}(t^{(n)}_1) \in \Delta_d\Big)<1.
\end{align*}
Therefore the explicit Euler-Maruyama scheme is not suitable for approximating the non-colliding process \eqref{def:X}.

\subsection{Semi-implicit Euler-Maruyama scheme}
In the following we propose a semi-implicit Euler-Maruyama scheme for \eqref{def:X}, which preserves the non-colliding property of the solution. The construction of the semi-implicit scheme is based on the following result. 

\begin{Prop}\label{IEM_sol}
	Let $a=(a_1,\ldots,a_d)^{*} \in \real^d, \ c_{i,j} = c_{j,i} \geq 0$,  for any $1\leq i < j \leq d$ and $c_{i,i+1}>0$. The following system of equations has a unique solution,
	\begin{equation} \label{eqn_xi}
		\xi_i = a_i + \sum_{j\not = i} \frac{c_{i,j}}{\xi_i - \xi_j}, \quad i=1,\ldots, d,
	\end{equation}
	which satisfies $\xi_1<\xi_2<\cdots < \xi_d$.
\end{Prop}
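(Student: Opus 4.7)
My plan is to cast the problem variationally: I introduce the potential
\[
	F(\xi) = \frac{1}{2}\sum_{i=1}^{d}(\xi_i - a_i)^2 \; - \sum_{1\le i<j\le d} c_{i,j}\, \log(\xi_j - \xi_i),
\]
defined on the open convex set $\Delta_d$. A direct differentiation, using $c_{i,j}=c_{j,i}$ to rewrite $\frac{1}{\xi_j-\xi_i}=-\frac{1}{\xi_i-\xi_j}$, shows
\[
	\frac{\partial F}{\partial \xi_i}(\xi) = (\xi_i - a_i) - \sum_{j\neq i} \frac{c_{i,j}}{\xi_i-\xi_j},
\]
so critical points of $F$ in $\Delta_d$ are exactly the solutions of \eqref{eqn_xi} lying in $\Delta_d$. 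The strategy is therefore to show that $F$ admits a unique critical point, by establishing strict convexity and coercivity.

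For strict convexity, I would argue that the quadratic part has Hessian equal to the identity (strictly positive definite), while each summand $-c_{i,j}\log(\xi_j-\xi_i)$ is convex: it is the composition of the convex function $t\mapsto -\log t$ on $(0,\infty)$ with the linear form $\xi\mapsto \xi_j-\xi_i$, which is positive on $\Delta_d$. The sum of a strictly convex function and convex functions is strictly convex on $\Delta_d$; in particular any critical point is the unique global minimizer.

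For existence of the minimizer, I would prove coercivity, namely that $F(\xi)\to +\infty$ as $\xi$ approaches $\partial\Delta_d$ or $|\xi|\to\infty$. At infinity, the quadratic term $\tfrac{1}{2}\sum(\xi_i-a_i)^2$ dominates the at-most logarithmic terms $-c_{i,j}\log(\xi_j-\xi_i)$, which can decay only like $-\log|\xi|$. On sequences converging to a boundary point of $\Delta_d$ that stay in a bounded region, at least one consecutive gap $\xi_{i+1}-\xi_i$ tends to $0^+$, and the corresponding term $-c_{i,i+1}\log(\xi_{i+1}-\xi_i)$ blows up to $+\infty$; the remaining terms $-c_{j,k}\log(\xi_k-\xi_j)$ with $(j,k)\neq(i,i+1)$ stay bounded below on any bounded region of $\overline{\Delta_d}$ that keeps the gap $\xi_k-\xi_j$ bounded above. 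Combining the two regimes by a compactification argument (or by working on sublevel sets $\{F\le M\}$), I conclude that $F$ attains its infimum on a non-empty compact subset of $\Delta_d$; by strict convexity the minimizer is unique, and it is a critical point since $F$ is $C^1$ on the open set $\Delta_d$. Hence \eqref{eqn_xi} has exactly one solution, and that solution lies in $\Delta_d$, i.e.\ satisfies $\xi_1<\cdots<\xi_d$.

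The main subtlety I expect is the boundary coercivity argument, where the hypothesis $c_{i,i+1}>0$ for every $i=1,\dots,d-1$ is indispensable: if only $c_{i,j}\ge 0$ were known for generic pairs, a merging of a consecutive pair $(i,i{+}1)$ with $c_{i,i+1}=0$ would not force $F\to+\infty$, and the minimizer could escape to the boundary. All other ingredients are standard convex-analysis facts, so the proof will essentially reduce to carefully organizing the coercivity estimate.
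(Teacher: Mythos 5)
Your argument is correct, and it takes a genuinely different route from the paper. You recast the system as the critical-point equation $\nabla F=0$ for the potential $F(\xi)=\tfrac12\sum_i(\xi_i-a_i)^2-\sum_{i<j}c_{i,j}\log(\xi_j-\xi_i)$ on $\Delta_d$, and get existence and uniqueness simultaneously from strict convexity plus coercivity (the quadratic term beats the logarithms at infinity; the hypothesis $c_{i,i+1}>0$ forces $F\to+\infty$ at the boundary, exactly where you say it is indispensable). The paper instead splits the two halves: existence is obtained by a homotopy/ODE continuation argument in the style of Hairer--Wanner, integrating $\tfrac{d\x}{dt}=\frac{\partial \g(\x)}{\partial \x}\frac{d\x}{dt}+\g(\J)$ from $\J=(1,\dots,d)^*$ to time $1$ and using a velocity bound together with the blow-up of $\g$ at $\partial\Delta_d$ to rule out exit from the domain; uniqueness is proved by the direct monotonicity computation $\sum_{i<j}c_{i,j}\{(\xi_i-\xi_j)-(\mu_i-\mu_j)\}\{\frac{1}{\xi_i-\xi_j}-\frac{1}{\mu_i-\mu_j}\}\le 0$. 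That computation is precisely the monotonicity of $-\nabla$ of your logarithmic part, so the two uniqueness arguments are the same fact in different clothing; the real divergence is in existence, where your variational/compactness argument is more elementary and self-contained, while the paper's homotopy construction is closer in spirit to the iterative numerical schemes it develops in Section \ref{Sec:4}. The only point worth writing out fully in your version is the coercivity estimate you flag yourself (e.g.\ $F(\xi)\ge \tfrac12\sum_i(\xi_i-a_i)^2-\bigl(\sum_{i<j}c_{i,j}\bigr)\log^+(2|\xi|)$ plus the single blowing-up consecutive term on bounded sets), which indeed closes the proof.
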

\begin{proof} 
	The following proof is based on a homotopy argument presented in \cite[page 230]{EHairer}.
	Denote $\J = (1,2,\ldots, d)^* \in \Delta_d$ and 
	$$g_i(\x) = g_i(x_1,\ldots, x_d) = a_i - i + \sum_{j\not = i} \frac{c_{i,j}}{x_i - x_j}, \quad i=1,\ldots, d.$$
	Note that $\g=(g_1,\ldots, g_d)^* \in C^\infty(\Delta_d;\real^d)$. 
	We consider the following differential equation
	\begin{equation} \label{dyJ}
		\begin{cases} 
			\frac{d \x}{dt} = \frac{\partial \g(\x)}{\partial \x} \frac{d \x}{dt} + \g(\J), \quad t>0,\\
			\x(0)= \J,
		\end{cases}
	\end{equation}
	where $ \frac{\partial \g(\x)}{\partial \x} = (\frac{\partial g_i(\x)}{\partial x_j})_{i,j}.$
	Since $c_{i,j}=c_{j,i}$,  for any $\y=(y_1,\ldots,y_d)^* \in \real^d$, we have 
	\begin{align*}
		\langle \frac{\partial \g(\x)}{\partial \x} \y, \y \rangle &= \sum_{i,j} \frac{\partial g_i(\x)}{\partial x_j}y_jy_i \\
		&= \sum_{i \not = j} \frac{c_{i,j}}{(x_i-x_j)^2}y_jy_i -\sum_{i \not = j} \frac{c_{i,j}}{(x_i-x_j)^2}y_i^2\\
		&= - \frac 12 \sum_{i \not = j} \frac{c_{i,j}}{(x_i-x_j)^2}(y_i-y_j)^2 \leq 0. 
	\end{align*}
	Therefore, $I_d-\frac{\partial \g(\x)}{\partial \x}$ is a strictly positive definite matrix.
	Since $\g \in C^\infty(\Delta_d;\real^d)$, equation \eqref{dyJ} has a unique local solution which can be continued up to the boundary of $\Delta_d$.
	Denote $t^*=\inf\{t>0: \x(t) \not \in \Delta_d\}$. For $t<t^*$, thanks to the initial condition $\x(0) =\J$, we have 
	\begin{equation*} 
		\x(t) = \g(\x(t))+ \J + (t-1)\g(\J).
	\end{equation*} 
	Moreover, 
	$$\Big|\frac{d \x}{dt}\Big|^2 = \langle \frac{\partial \g(\x)}{\partial \x} \frac{d \x}{dt}, \frac{d \x}{dt}\rangle + \langle \g(\J), \frac{d \x}{dt} \rangle \leq \langle \g(\J), \frac{d \x}{dt} \rangle \leq \Big|\frac{d \x}{dt}\Big| |\g(\J)|.$$
	Thus 
	$$ \Big|\frac{d \x}{dt}\Big| \leq  |\g(\J)|.$$
	This estimation together with the fact that  $g(\x) $ blows up at the boundary of $\Delta_d$ implies that $t^*=\infty$. Let $t=1$, we get 
	$$\x(1) = \g(\x(1)) + \J \in \Delta_d,$$
	which means that $\xi = \x(1)$ is a solution to equation \eqref{eqn_xi}.
	
	Now we consider the uniqueness of solution to equation \eqref{eqn_xi} in $\Delta_d$.
	Let $\xi=(\xi_1,\ldots,\xi_d)$, $\mu=(\mu_1,\ldots,\mu_d) \in \Delta_d$ be solutions of the equation \eqref{eqn_xi}.
	Then, since $c_{i,j}=c_{j,i} \geq 0$, it follows from the identity
	\begin{align} \label{id1}
		\sum_{i=1}^{d} A_i \sum_{j \neq i} B_{i,j}=\sum_{i<j} \{A_iB_{i,j}+A_jB_{j,i}\},~A_i, B_{i,j} \in \real,
	\end{align}
	that
	\begin{align*}
		|\xi-\mu|^2
		&=\langle \xi-\mu, \xi-\mu \rangle\\
		&=\sum_{i=1}^{d} (\xi_i-\mu_i) \sum_{j \neq i} c_{i,j} \left\{ \frac{1}{\xi_i-\xi_j} - \frac{1}{\mu_i-\mu_j} \right\}\\
		&=\sum_{i<j} c_{i,j} \{ (\xi_i-\mu_i)-(\xi_j-\mu_j) \} \left\{ \frac{1}{\xi_i-\xi_j} - \frac{1}{\mu_i-\mu_j} \right\} \\
		&=\sum_{i<j} c_{i,j} \{ (\xi_i-\xi_j)-(\mu_i-\mu_j) \} \left\{ \frac{1}{\xi_i-\xi_j} - \frac{1}{\mu_i-\mu_j} \right\} 
		\leq 0.
	\end{align*}
	This concludes $\xi=\mu$.
\end{proof}
\begin{Rem}
	An interesting consequence of Proposition \ref{IEM_sol} is that the non-linear system of equations \eqref{eqn_xi} has exactly $d !$ solutions on $\real^d$.
\end{Rem}

\begin{Rem}
	The system of equations \eqref{eqn_xi} does not have a closed form solution in general. In Section \ref{Sec:4} we will construct an approximation scheme for its solution in some particular cases.
\end{Rem}

Based on Proposition \ref{IEM_sol}, a semi-implicit Euler-Maruyama scheme for non-colliding process \eqref{def:X} is defined as follows: $X^{(n)}(0):=X(0)$ and for each $k=0,\ldots,n-1$, $X^{(n)}(t_{k+1}^{(n)})$ is the unique solution in $\Delta_d$ of the following equation:
\begin{align*}
	X_i^{(n)}(t_{k+1}^{(n)})
	=X_i^{(n)}(t_{k}^{(n)})
	&+ \left\{ \sum_{j \neq i} \frac{\gamma_{i,j}}{X_i^{(n)}(t_{k+1}^{(n)})-X_j^{(n)}(t_{k+1}^{(n)})}+ b_i\left(X^{(n)}(t_{k}^{(n)})\right) \right\} \frac{T}{n}\\
	&+ \sum_{j=1}^{d} \sigma_{i,j} \left(X^{(n)}(t_{k}^{(n)})\right) \left\{ W_j(t_{k+1}^{(n)})-W_j(t_{k}^{(n)}) \right\}.
\end{align*}
We then define for $t \in (0,T] \setminus \{t_1^{(n)},\ldots,t_n^{(n)}\}$,
\begin{align*}
	X_i^{(n)}(t)
	=X_i^{(n)}(\eta_n(t))
	&+ \left\{ \sum_{j \neq i} \frac{\gamma_{i,j}}{X_i^{(n)}(\kappa_n(t))-X_j^{(n)}(\kappa_n(t))}+ b_i(X^{(n)}\left(\eta_n(t))\right) \right\} (t-\eta_n(t))\\
	&+ \sum_{j=1}^{d}\sigma_{i,j} \left(X^{(n)}(\eta_n(t))\right) \left\{ W_j(t)-W_j(\eta_n(t)) \right\},
\end{align*}
where $\kappa_n(s) = (k+1)T/n=t_{k+1}^{(n)}$ if $ s \in \left[kT/n, (k+1)T/n \right)$.
Hence $X^{(n)}(t)$ satisfies 
\begin{align*}
	X_i^{(n)}(t)
	=X_i(0)
	&+\int_{0}^{t}
	\left\{ \sum_{j \neq i} \frac{\gamma_{i,j}}{X_{i}^{(n)}(\kappa_n(s))-X_{j}^{(n)}(\kappa_n(s))}+ b_i(X^{(n)}\left(\eta_n(s))\right) \right\} \mathrm{d}s \notag\\
	&+\sum_{j=1}^{d} \int_{0}^{t} \sigma_{i,j}(X^{(n)}(\eta_n(s))) \mathrm{d}W_j(s).
\end{align*}
We denote $X_{i,j}(t)=X_i(t)-X_j(t)$ and $X_{i,j}^{(n)}(t):=X_i^{(n)}(t)-X_j^{(n)}(t)$.

We also repeatedly use the following representation of the estimation error, $e_i(t):=X_i(t)-X_i^{(n)}(t)$
and $e(t):=(e_1(t),\ldots,e_d(t))^{*}$.
Then for $k=0,\ldots,n-1$, we have
\begin{align}\label{def_ei}
	e_i(t_{k+1}^{(n)})
	&=e_i(t_{k}^{(n)})
	+\sum_{j \neq i}  \left\{\frac{\gamma_{i,j}}{X_{i,j}(t_{k+1}^{(n)})}-\frac{\gamma_{i,j}}{X_{i,j}^{(n)}(t_{k+1}^{(n)})} \right\} \frac{T}{n} \\
	&\quad+\left\{b_i(X(t_{k}^{(n)}))-b_i(X^{(n)}(t_{k}^{(n)}))\right\} \frac{T}{n}\notag\\
	&\quad+\sum_{j=1}^{d}
	\left\{
	\sigma_i(X(t_{k}^{(n)}))
	-\sigma_i(X^{(n)}(t_{k}^{(n)}))
	\right\}
	\{W_j(t_{k+1}^{(n)})-W_j(t_{k}^{(n)})\}
	+r_i(k) \notag,
\end{align}
where
\begin{align}\label{def_rik}
	r_i(k)
	&:= \sum_{j \neq i} \int_{t_{k}^{(n)}}^{t_{k+1}^{(n)}} \left\{\frac{\gamma_{i,j}}{X_{i,j}(s)}-\frac{\gamma_{i,j}}{X_{i,j}(t_{k+1}^{(n)})} \right\} \mathrm{d}s
	+\int_{t_{k}^{(n)}}^{t_{k+1}^{(n)}} \left\{b_i(X(s))-b_i(X(t_{k}^{(n)}))\right\} \mathrm{d}s \\
	&\quad+
	\sum_{j=1}^{d} \int_{t_{k}^{(n)}}^{t_{k+1}^{(n)}} \left\{\sigma_{i,j}(X(s)) -\sigma_{i,j}(X(t_{k}^{(n)})) \right\} \mathrm{d}W_j(s) \notag.
\end{align}

\subsection{The case of constant diffusion coefficient}

In this  subsection, we consider the convergence of $X^{(n)}$ where  diffusion coefficient is a constant. 

The following result states that $X^{(n)}$ converges to $X$ at the rate of order almost $1/2$ in the path-wise sense provided that the system \eqref{def:X} has a strong solution in $\Delta_d$ on $[0,T]$.

\begin{Thm} \label{pathwise1}
	Assume that $\sigma_{i,j}(x) \equiv \sigma_{i,j}$ and system of equations \eqref{def:X} has a unique strong solution in $\Delta_d$ on $[0,T]$. Then there exists a finite random variable $\eta$ which does not depend on $n$ such that 
	$$\sup_{k=1,\ldots, n} |X(t^{(n)}_k) - X^{(n)}(t^{(n)}_k)| \leq \frac{\sqrt{\log n}}{\sqrt{n}}\eta \quad \text{a.s.}$$
\end{Thm}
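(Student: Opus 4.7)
The starting observation is that once $\sigma_{i,j}$ is a constant, the martingale increment in the local error $r_i(k)$ given by \eqref{def_rik} vanishes identically, so \eqref{def_ei} becomes a purely pathwise recursion in which the only stochastic object is the solution $X$ itself. The strategy is therefore to derive an energy-type bound on $|e(t_{k+1})|$ by taking the inner product of $e(t_{k+1})$ with the recursion \eqref{def_ei}, kill the singular interaction term using the monotonicity identity from the uniqueness half of Proposition \ref{IEM_sol}, and then iterate a discrete Gronwall inequality whose driving term $\sum_k |r(k)|$ is controlled pathwise by the modulus of continuity of $X$ on $[0,T]$.

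For the energy step, I write
\[
\langle e(t_{k+1}), e(t_{k+1})\rangle = \langle e(t_{k+1}), e(t_k)\rangle + \frac{T}{n}\langle e(t_{k+1}), \Delta_{\text{int}}(k+1)\rangle + \frac{T}{n}\langle e(t_{k+1}), \Delta_b(k)\rangle + \langle e(t_{k+1}), r(k)\rangle,
\]
where $\Delta_{\text{int}}(k+1)$ collects the $\gamma_{i,j}$-terms at time $t_{k+1}^{(n)}$. Using identity \eqref{id1} exactly as in the proof of Proposition \ref{IEM_sol}, together with $e_i-e_j = X_{i,j}-X_{i,j}^{(n)}$ and $\frac{1}{X_{i,j}}-\frac{1}{X_{i,j}^{(n)}} = -\frac{X_{i,j}-X_{i,j}^{(n)}}{X_{i,j}\,X_{i,j}^{(n)}}$, I get
\[
\langle e(t_{k+1}), \Delta_{\text{int}}(k+1)\rangle = -\sum_{i<j}\gamma_{i,j}\,\frac{(X_{i,j}(t_{k+1}^{(n)})-X_{i,j}^{(n)}(t_{k+1}^{(n)}))^2}{X_{i,j}(t_{k+1}^{(n)})\,X_{i,j}^{(n)}(t_{k+1}^{(n)})} \le 0,
\]
since both $X(t_{k+1}^{(n)})$ and $X^{(n)}(t_{k+1}^{(n)})$ lie in $\Delta_d$, so the denominator is positive. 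Bounding the drift term by $\sqrt{d}\|b\|_{Lip}|e(t_{k+1})||e(t_k)|$ and using Cauchy-Schwarz on the remainder yields the one-step bound $|e(t_{k+1})| \le (1+C T/n)|e(t_k)| + |r(k)|$ with $C = \sqrt{d}\|b\|_{Lip}$, and, since $e(0)=0$, discrete Gronwall gives $\sup_{k\le n}|e(t_k^{(n)})| \le e^{CT}\sum_{j=0}^{n-1}|r(j)|$.

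The remaining task is an almost-sure bound on $\sum_{j=0}^{n-1}|r(j)|$. I introduce the a.s.\ finite, a.s.\ positive random quantities
\[
\rho := \min_{i<j}\inf_{s\in[0,T]}|X_{i,j}(s)|, \qquad \omega_n := \sup_{|t-s|\le T/n,\, s,t\in[0,T]}|X(t)-X(s)|,
\]
where $\rho>0$ a.s.\ because $X$ stays in $\Delta_d$ on the compact interval $[0,T]$. Since $\frac{1}{X_{i,j}(s)}-\frac{1}{X_{i,j}(t_{k+1}^{(n)})}$ is bounded in modulus by $\rho^{-2}|X_{i,j}(s)-X_{i,j}(t_{k+1}^{(n)})| \le 2\rho^{-2}\omega_n$ on $[t_k^{(n)},t_{k+1}^{(n)}]$, and the martingale piece of $r_i(k)$ vanishes, I get $|r(k)| \le K(\rho,\gamma,\|b\|_{Lip},d)\,(T/n)\,\omega_n$, so $\sum_{j=0}^{n-1}|r(j)| \le K T \omega_n$. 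Because $X=X(0)+\int_0^{\cdot} \mu(X(s))\,ds + \sigma W$ with $\sigma$ constant and $\mu(X(\cdot))$ bounded along the path on $[0,T]$ (the singular term is bounded by $\rho^{-1}$ times constants), I obtain $\omega_n \le M T/n + \|\sigma\|\,\omega_n^W$ for an a.s.\ finite random $M$, where $\omega_n^W$ is the corresponding modulus of the Brownian path. L\'evy's modulus of continuity theorem then gives $\omega_n^W \le \tilde\eta\sqrt{(T/n)\log n}$ for all $n$ large, with $\tilde\eta$ a.s.\ finite, and this absorbs the $MT/n$ term. Combining everything yields $\sup_{k\le n}|e(t_k^{(n)})| \le \eta\sqrt{(\log n)/n}$ a.s.

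The conceptually delicate step is the energy step: one must work at time $t_{k+1}^{(n)}$ (so both denominators are positive) and exploit the bilinear symmetry of $\gamma_{i,j}$ to convert the dangerous singular contribution into a manifestly nonpositive quantity. Once that is in hand, the rest is a pathwise Gronwall argument fed by L\'evy's modulus of continuity, and the $\sqrt{\log n}$ factor enters only through $\omega_n^W$.
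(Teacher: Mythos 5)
Your proposal is correct and follows essentially the same route as the paper: the same energy step at time $t_{k+1}^{(n)}$ using identity \eqref{id1} and the monotonicity of $x\mapsto 1/x$ to discard the singular term, a discrete Gronwall bound of the form $\sup_k|e(t_k^{(n)})|\le C\sum_j|r(j)|$, and a pathwise control of the remainder via the a.s.\ positivity of $\inf_{s,i\neq j}|X_{i,j}(s)|$ together with L\'evy's modulus of continuity for the driving Brownian motion. Your one-step linear recursion $|e(t_{k+1}^{(n)})|\le(1+CT/n)|e(t_k^{(n)})|+|r(k)|$ is a marginally cleaner packaging of the paper's quadratic version, but the argument is the same.
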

\begin{proof}
	Using the identity \eqref{id1}
	and the fact that  $e_j-e_i=X_{j,i}-X_{j,i}^{(n)}$, we get
	\begin{align*}
		&|e(t_{k+1}^{(n)})|^2
		=\sum_{i=1}^{d} |e_{i}(t_{k+1}^{(n)})|^2\\
		=&\sum_{i=1}^{d} e_i(t_{k}^{(n)}) e_i(t_{k+1}^{(n)})
		+ \sum_{i=1}^{d} e_i(t_{k+1}^{(n)}) \sum_{j \neq i} \left\{\frac{\gamma_{i,j}}{X_{i,j}(t_{k+1}^{(n)})}-\frac{\gamma_{i,j}}{X_{i,j}^{(n)}(t_{k+1}^{(n)})} \right\} \frac{T}{n}\\
		&+\sum_{i=1}^{d} e_i(t_{k+1}^{(n)}) \left\{b_i(X(t_{k}^{(n)}))-b_i(X^{(n)}(t_{k}^{(n)}))\right\} \frac{T}{n}
		+\sum_{i=1}^{d} e_i(t_{k+1}^{(n)}) r_i(k)\\
		=&\sum_{i=1}^{d} e_i(t_{k}^{(n)}) e_i(t_{k+1}^{(n)})
		+ \sum_{i<j} \{X_{j,i}(t_{k+1}^{(n)})-X_{j,i}^{(n)}(t_{k+1}^{(n)})\} \left\{\frac{\gamma_{i,j}}{X_{j,i}(t_{k+1}^{(n)})}-\frac{\gamma_{i,j}}{X_{j,i}^{(n)}(t_{k+1}^{(n)})} \right\} \frac{T}{n}\\
		&+\sum_{i=1}^{d} e_i(t_{k+1}^{(n)}) \left\{b_i(X(t_{k}^{(n)}))-b_i(X^{(n)}(t_{k}^{(n)}))\right\} \frac{T}{n}
		+\sum_{i=1}^{d} e_i(t_{k+1}^{(n)}) r_i(k).
	\end{align*}
	Using the fact that $(x-y)(\frac{1}{x}-\frac{1}{y}) \leq 0$ and  $xy \leq x^2/2+y^2/2$, we have
	\begin{align*}
		|e(t_{k+1}^{(n)})|^2
		\leq &\frac{1}{2}|e(t_{k}^{(n)})|^2
		+\frac{1}{2}|e(t_{k+1}^{(n)})|^2\\
		&\qquad	+ |e(t_{k}^{(n)})| \sum_{i=1}^{d} |e_i(t_{k+1}^{(n)})| \frac{T\|b\|_{Lip}}{n}
		+\sum_{i=1}^{d} e_i(t_{k+1}^{(n)}) r_i(k).
	\end{align*}
	Hence we have, for any $k=0,\ldots,n-1$,
	\begin{align*}
		|e(t_{k+1}^{(n)})|^2
		\leq &|e(t_{k}^{(n)})|^2
		+|e(t_{k}^{(n)})| \sum_{i=1}^{d} |e_i(t_{k+1}^{(n)})| \frac{C_1}{n}
		+2\sum_{i=1}^{d} |e_i(t_{k+1}^{(n)})| |r_i(k)|,\\
		\leq& \sum_{\ell=0}^{k} |e(t_{\ell}^{(n)})| \sum_{i=1}^{d} |e_i(t_{\ell+1}^{(n)})| \frac{	C_1}{n}
		+2 \sum_{\ell=0}^{k} \sum_{i=1}^{d} |e_i(t_{\ell+1}^{(n)})| |r_i(\ell)|,
	\end{align*}
	where $C_1:=2T\|b\|_{Lip}$.
	By taking the supremum with respect to $k$, we obtain for any $m=1,\ldots,n$
	\begin{align*}
		&\sup_{k=1,\ldots, m}|e(t_{k}^{(n)})|^2
		\leq \sum_{\ell=0}^{m-1} |e(t_{\ell}^{(n)})| \sum_{i=1}^{d} |e_i(t_{\ell+1}^{(n)})| \frac{C_1}{n}
		+2 \sum_{\ell=0}^{m-1} \sum_{i=1}^{d} |e_i(t_{\ell+1}^{(n)})| |r_i(\ell)|\\
		\leq& \sup_{k=1,\ldots, m}|e(t_{k}^{(n)})| \sum_{\ell=0}^{m-1} \sup_{k=1,\ldots, \ell}|e(t_{k}^{(n)})| \frac{dC_1}{n}
		+2 \sup_{k=1,\ldots, m}|e(t_{k}^{(n)})| \sum_{\ell=0}^{m-1} \sum_{i=1}^{d} |r_i(\ell)|.
	\end{align*}
	and thus,
	\begin{align*}
		\sup_{k=1,\ldots, m}|e(t_{k}^{(n)})|
		\leq& \sum_{\ell=0}^{m-1} \sup_{k=1,\ldots, \ell}|e(t_{k}^{(n)})| \frac{dC_1}{n}
		+2 \sum_{\ell=0}^{m-1} \sum_{i=1}^{d} |r_i(\ell)|.
	\end{align*}
	By using discrete Gronwall's inequality (e.g. Chapter XIV, Theorem 1 and Remark 1,2 in \cite{MiPeFi}, page 436-437), we obtain,
	\begin{align}
		\sup_{k=1,\ldots, m}|e(t_{k}^{(n)})|
		\leq& 
		2\left\{
		1+
		\sum_{\ell=0}^{m-1} \frac{dC_1}{n} \exp\left( \sum_{\ell=0}^{m-1} \frac{dC_1}{n} \right)
		\right\}
		\sum_{\ell=0}^{m-1} \sum_{i=1}^{d} |r_i(\ell)| \notag
		\\
		\leq &
		2\left\{
		1+dC_1 \exp \left( dC_1 \right)
		\right\}
		\sum_{\ell=0}^{m-1} \sum_{i=1}^{d} |r_i(\ell)|. \label{est_sup_e}
	\end{align}
	Therefore,
	\begin{align*}
		&\dfrac{ \sup_{k=1,\ldots, n}|e(t_{k}^{(n)})|}{2\left\{
			1+dC_1 \exp \left( dC_1 \right)
			\right\}}\\
		&\leq  \sum_{i=1}^{d} \sum_{j \neq i} \int_{0}^{T} \left|\frac{\gamma_{i,j}}{X_{i,j}(s)}-\frac{\gamma_{i,j}}{X_{i,j}(\kappa_n(s))} \right| \mathrm{d}s
		+\sum_{i=1}^{d}
		\int_{0}^{T} \left|b_i(X(s))-b_i(X(\eta_n(s)))\right| \mathrm{d}s\\ 
		&\leq  \sum_{i=1}^{d} \sum_{j \neq i} \int_{0}^{T} \gamma_{i,j} \left|\frac{X_{i,j}(s)- X_{i,j}(\kappa_n(s))}{\inf_{0\leq s\leq T} X_{i,j}(s)^2} \right| \mathrm{d}s
		+d \|b\|_{Lip}
		\int_{0}^{T} \left| X(s) - X(\eta_n(s)) \right| \mathrm{d}s.
	\end{align*}
	Moreover, for any $0 \leq s \leq t\leq T$,
	\begin{align*}
		|X_i(t) - X_i(s)|
		&\leq \int_s^t  \left\{\sum_{j \not = i} \frac{ \gamma_{i,j} }{|X_i(u) - X_j(u)|}+\|b\|_{Lip} |X(u)|+ |b_i(0)|\right\}du\\
		&\quad+ \sum_{j=1}^d |\sigma_{i,j}||W_j(t) - W_j(s)|.
	\end{align*}
	Since $X_t\in \Delta_d$ for $t\in[0,T]$, we have
	\begin{align*}
		\sup_{u\in [0,T]} \left\{\sum_{j \not = i} \frac{ \gamma_{i,j} }{|X_i(u) - X_j(u)|} +\|b\|_{Lip} |X(u)|+ |b_i(0)|\right\} < \infty,
	\end{align*}
	and
	$$ \inf_{0\leq s \leq T} \inf_{i \not =  j} X_{i,j}(s)^2 > 0.$$
	These estimates together with L\'evy's modulus of continuity theorem yield the desired result.
\end{proof}

\begin{Rem}
	The class of SDEs \eqref{def:X} with $\sigma_{i,j}(x) \equiv \sigma_{i,j}$ contains both Dyson Brownian motions (e.g. \cite{Al,Dyson}) and Dyson-Ornstein-Uhlenbeck processes (e.g. \cite{Ra}).
\end{Rem}

In order to show the convergence of the semi-implicit Euler Maruyama scheme in $L^p$-norm, we need the following hypothesis on the integrablity and Kolmogorov type condition of $X$.

\begin{Hyp} \label{Ip}
	There exist constants $\hat{p}>0$ and $0<\hat{C}<\infty$ such that 
	$$\sup_{t\in[0,T]} \e[|X(t)|^{\hat{p}}] +  \max_{0\leq i < d}\sup_{t\in[0,T]} \e[|X_{i,i+1}(t)|^{-\hat{p}}] < \hat{C},$$
	and 
	$$ \e[|X(t)-X(s)|^{\hat{p}}] \leq \hat{C}|t-s|^{\hat{p}/2}, \quad \text{for all  } \ 0 \leq s < t \leq T.$$ 
\end{Hyp}

In Section \ref{Sec:moment} we will introduce some conditions on $\gamma_{i,j}$, $b_i$ and $\sigma_{i,j}$, which guarantee that Hypothesis \ref{Ip} holds.

\begin{Thm}\label{main_1}
	Suppose that the assumptions of Theorem \ref{pathwise1} hold. Moreover, suppose that Hypothesis \ref{Ip} holds for some $\hat{p} = 3p\geq 3$. 
	Then there exists $C>0$ which depends on $d$ such that for any $n \in \n$,
	\begin{align*}
		\e\left[\sup_{k=1,\ldots,n}|X(t_{k}^{(n)})-X^{(n)}(t_{k}^{(n)})|^p\right]^{1/p}
		\leq \frac{C}{n^{1/2}}.
	\end{align*}
\end{Thm}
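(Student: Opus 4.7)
The plan is to take the $L^p$-norm of the pathwise estimate \eqref{est_sup_e} obtained in the proof of Theorem \ref{pathwise1}. Since $\sigma_{i,j}$ is constant, the stochastic-integral term in $r_i(\ell)$ vanishes, so
\begin{align*}
r_i(\ell) = \sum_{j \neq i} \int_{t_{\ell}^{(n)}}^{t_{\ell+1}^{(n)}} \gamma_{i,j}\left\{\frac{1}{X_{i,j}(s)} - \frac{1}{X_{i,j}(t_{\ell+1}^{(n)})}\right\} \mathrm{d}s + \int_{t_{\ell}^{(n)}}^{t_{\ell+1}^{(n)}} \{b_i(X(s)) - b_i(X(t_\ell^{(n)}))\}\,\mathrm{d}s.
\end{align*}
Taking the $L^p$-norm in \eqref{est_sup_e} with $m=n$ and applying the triangle inequality reduces the task to proving the step-wise bound $\|r_i(\ell)\|_{L^p} \leq C/n^{3/2}$; summing over the $n$ time steps and $d$ coordinates then produces the desired rate $C/n^{1/2}$.

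For the drift part of $r_i(\ell)$ I would use global Lipschitz continuity of $b_i$ together with the Kolmogorov-type estimate in Hypothesis \ref{Ip} (and Jensen's inequality to pass from the $\hat p$-norm to the $p$-norm), giving
\begin{align*}
\|b_i(X(s)) - b_i(X(t_\ell^{(n)}))\|_{L^p} \leq \|b\|_{Lip}\,\|X(s) - X(t_\ell^{(n)})\|_{L^{3p}} \leq C(s - t_\ell^{(n)})^{1/2};
\end{align*}
integrating this pointwise bound over an interval of length $T/n$ contributes $O(n^{-3/2})$ to $\|r_i(\ell)\|_{L^p}$.

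The main obstacle will be the singular contribution, which I would attack by writing
\begin{align*}
\frac{1}{X_{i,j}(s)} - \frac{1}{X_{i,j}(t_{\ell+1}^{(n)})} = \frac{X_{i,j}(t_{\ell+1}^{(n)}) - X_{i,j}(s)}{X_{i,j}(s)\,X_{i,j}(t_{\ell+1}^{(n)})},
\end{align*}
and then applying the three-factor H\"older inequality with each exponent equal to $3p=\hat p$:
\begin{align*}
\left\|\frac{X_{i,j}(t_{\ell+1}^{(n)}) - X_{i,j}(s)}{X_{i,j}(s)\,X_{i,j}(t_{\ell+1}^{(n)})}\right\|_{L^p} \leq \|X_{i,j}(t_{\ell+1}^{(n)}) - X_{i,j}(s)\|_{L^{3p}}\, \|X_{i,j}(s)^{-1}\|_{L^{3p}}\, \|X_{i,j}(t_{\ell+1}^{(n)})^{-1}\|_{L^{3p}}.
\end{align*}
The first factor is $O((t_{\ell+1}^{(n)}-s)^{1/2})$ by Hypothesis \ref{Ip} and the crude bound $|X_{i,j}(t)-X_{i,j}(s)| \leq 2|X(t)-X(s)|$. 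Since Hypothesis \ref{Ip} only provides inverse-moment control for adjacent pairs, I would close this step by observing that $X(t) \in \Delta_d$ forces, for $i<j$, the telescoping inequality $X_j(t) - X_i(t) = \sum_{k=i}^{j-1}(X_{k+1}(t)-X_k(t)) \geq X_{i+1}(t) - X_i(t)>0$, whence $|X_{i,j}(t)|^{-1} \leq |X_{i,i+1}(t)|^{-1}$, extending the inverse-moment bound to all pairs $i \neq j$. Integrating the resulting pointwise $O((t_{\ell+1}^{(n)}-s)^{1/2})$ bound over an interval of length $T/n$ then yields $O(n^{-3/2})$, which completes the estimate of $\|r_i(\ell)\|_{L^p}$ and hence the theorem.
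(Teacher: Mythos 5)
Your proposal is correct and follows essentially the same route as the paper: both start from the pathwise bound \eqref{est_sup_e}, reduce the problem to estimating the remainders $r_i(\ell)$, and treat the singular term via the identity $\tfrac{1}{x}-\tfrac{1}{y}=\tfrac{y-x}{xy}$ combined with the three-factor H\"older inequality at exponent $3p=\hat p$ and Hypothesis \ref{Ip}. The only cosmetic difference is that you sum termwise $L^p$-bounds of order $n^{-3/2}$ via Minkowski's integral inequality, whereas the paper applies H\"older's inequality in time to the aggregated integral over $[0,T]$; your explicit observation that $|X_{i,j}(t)|^{-1}\le|X_{i,i+1}(t)|^{-1}$ for $i<j$ simply makes precise a step the paper leaves implicit.
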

\begin{proof}
	We will use the estimate \eqref{est_sup_e} to show the desired result. Note that from \eqref{def_rik} we get
	\begin{align} \label{est_ril}
		\sum_{\ell=0}^{m-1} \sum_{i=1}^{d} |r_i(\ell)|
		&\leq  \sum_{i=1}^{d} \sum_{j \neq i} \int_{0}^{t_{m}^{(n)}} \left|\frac{\gamma_{i,j}}{X_{i,j}(s)}-\frac{\gamma_{i,j}}{X_{i,j}(\kappa_n(s))} \right| \mathrm{d}s\\
		&\quad 	+\sum_{i=1}^{d}
		\int_{0}^{t_{m}^{(n)}} \left|b_i(X_i(s))-b_i(X_i(\eta_n(s)))\right| \mathrm{d}s. \notag 
	\end{align}
	
	It follows from H\"older's inequality that
	\begin{align*}
		&\e\left[\left(\int_{0}^{t_{m}^{(n)}} \left | \frac{1}{X_{i,j}(s)}-\frac{1}{X_{i,j}(\kappa_n(s))} \right| \mathrm{d}s\right)^{p}\right]
		\leq T^{p-1} \int_{0}^{T} \e\left[ \frac{|X_{i,j}(s)-X_{i,j}(\kappa_n(s))|^p}{|X_{i,j}(s)|^p |X_{i,j}(\kappa_n(s))|^p} \right] \mathrm{d} s\\
		&\leq T^{p-1} \int_{0}^{T}
		\Big( \e\left[ |X_{i,j}(s)-X_{i,j}(\kappa_n(s))|^{3p}\right] \Big)^{\frac{1}{3}}	
		\Big( \e\left[|X_{i,j}(s)|^{-3p} \right] \Big)^{1/3} \\
		&\qquad \times	\Big( \e\left[|X_{i,j}(\kappa_n(s))|^{-3p}\right]\Big)^{1/3} \mathrm{d}s.
	\end{align*}
	This estimate together with  Hypothesis \ref{Ip} implies 
	\begin{align*}
		&\e\left[\left(\int_{0}^{t_{m}^{(n)}} \left|\frac{1}{X_{i,j}(s)}-\frac{1}{X_{i,j}(\kappa_n(s))} \right| \mathrm{d}s\right)^{p}\right] \leq \frac{C}{n^{p/2}},
	\end{align*}
	for some constant $C>0$.
	Since each $b_i$ is Lipschitz continuous for $i=1,\ldots,d$, by using Hypothesis \ref{Ip}, we have
	\begin{align*}
		&\e\left[\left(\int_{0}^{t_{m}^{(n)}} \left|b_i(X(s))-b_i(X(\eta_n(s)))\right| \mathrm{d}s \right)^p\right]\\
		&\quad	\leq T^{p-1} \|b\|_{Lip}^p \int_{0}^{T} \e\left[ \left|X(s)-X(\eta_n(s)) \right|^p \right]\mathrm{d}s
		\leq \frac{C}{n^{p/2}},
	\end{align*}
	for some constant $C>0$.
	It then follows from \eqref{est_ril} that 
	\begin{align*}
		\e\left[
		\left( \sum_{\ell=0}^{m-1} \sum_{i=1}^{d} |r_i(\ell)| \right)^{p}
		\right]
		\leq \frac{C}{n^{p/2}},
	\end{align*}
	for some constant $C>0$.
	This estimate together with \eqref{est_sup_e} yields the desired result.
\end{proof}

Now we prove that if the drift coefficients $b_i$ are smooth, then the semi-implicit Euler-Maruyama scheme converges at the strong rate of order $1$.
\begin{Thm}\label{main_2}
	Suppose that the assumptions of Theorem \ref{pathwise1} hold. Moreover, suppose that Hypothesis \ref{Ip} holds for some $\hat{p} = 4p\geq 8$ and  $b_{i} \in C_b^2(\real^d;\real)$.
	Then there exists $C>0$ which depends on $d$ such that, for any $n \in \n$ with $T/n \leq 1$,
	\begin{align*}
		\e\left[\sup_{k=1,\ldots,n}|X(t_{k}^{(n)})-X^{(n)}(t_{k}^{(n)})|^p\right]^{1/p}
		\leq \frac{C}{n}.
	\end{align*}
\end{Thm}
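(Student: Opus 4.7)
The strategy is to refine the analysis of the per-step remainder $r_i(k)$ in \eqref{def_rik}. In Theorem \ref{main_1}, $|r_i(k)|$ has $L^p$-size $O(n^{-3/2})$, but summed via absolute values over $n$ steps this yields only rate $n^{-1/2}$; the extra gain must come from exploiting martingale cancellation and the additional smoothness $b_i\in C_b^2$. I would apply It\^o's formula (plus stochastic Fubini) to $b_i(X(\cdot))$ and to $1/X_{i,j}(\cdot)$ in order to write
\begin{align*}
r_i(k) = R^{d}_i(k) + R^{m}_i(k),
\end{align*}
where $R^{d}_i(k)$ is a bounded-variation ``drift-like'' contribution of $L^p$-norm $O(n^{-2})$, and $R^{m}_i(k) = \int_{t_k^{(n)}}^{t_{k+1}^{(n)}}\Psi_i(u)\cdot\rd W(u)$ is a Brownian stochastic integral with $\mf_u$-predictable integrand bounded by $|\Psi_i(u)|\leq C(\kappa_n(u)-u)\phi_i(X(u))$. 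Concretely, applying It\^o to $b_i(X(s))$ yields
\begin{align*}
\int_{t_{k}^{(n)}}^{t_{k+1}^{(n)}}\{b_i(X(s))-b_i(X(t_{k}^{(n)}))\}\,\rd s
=\int_{t_{k}^{(n)}}^{t_{k+1}^{(n)}}(t_{k+1}^{(n)}-u)\mathcal{L}b_i(X(u))\,\rd u
+\int_{t_{k}^{(n)}}^{t_{k+1}^{(n)}}(t_{k+1}^{(n)}-u)(\sigma^{*}\nabla b_i)(X(u))\cdot\rd W(u),
\end{align*}
where $\mathcal{L}$ is the generator of $X$; analogously, writing $\int_{t_{k}^{(n)}}^{t_{k+1}^{(n)}}\{1/X_{i,j}(s)-1/X_{i,j}(t_{k+1}^{(n)})\}\,\rd s = -\int_{t_{k}^{(n)}}^{t_{k+1}^{(n)}}(u-t_{k}^{(n)})\,\rd(1/X_{i,j}(u))$ and expanding $\rd(1/X_{i,j})$ via It\^o produces drift terms involving products like $1/X_{i,j}^{2}\cdot 1/X_{i,k}$ and a martingale integrand of size $1/X_{i,j}^{2}$, all of which are $p$-integrable by H\"older with $\hat{p}=4p$.

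Next, I would return to the recursion in the proof of Theorem \ref{main_1}, but rather than bounding $\sum_{i}e_i(t_{k+1}^{(n)})r_i(k)$ by absolute values (which destroys martingale cancellation), split $e_i(t_{k+1}^{(n)}) = e_i(t_{k}^{(n)}) + \Delta e_i(k)$ and substitute the above decomposition. The dominant term $M_{m}^{(i)} := \sum_{\ell<m}e_i(t_{\ell}^{(n)})R^{m}_i(\ell)$ is a discrete martingale in $m$ (since $e_i(t_{\ell}^{(n)})\in\mf_{t_{\ell}^{(n)}}$ and $\e[R^{m}_i(\ell)\mid\mf_{t_{\ell}^{(n)}}]=0$), and Burkholder-Davis-Gundy together with the size estimate on $\Psi_i$, H\"older with exponents $(2,2)$ (precisely where $\hat{p}=4p$ is used, to bound $\e[\phi_i^{2p}]\lesssim\e[1/X_{i,j}^{4p}]$), and a uniform-in-$n$ moment bound $\sup_n\e[\sup_{k\leq n}|X^{(n)}(t_{k}^{(n)})|^{2p}]<\infty$ (which must be established as a separate lemma for the implicit scheme) yield
\begin{align*}
\e\bigl[\sup_{m\leq n}|M_{m}^{(i)}|^{p}\bigr]^{1/p}\leq C/n.
\end{align*}
The remaining pieces $\sum_{\ell}e_i(t_{\ell}^{(n)})R^{d}_i(\ell)$ and $\sum_{\ell}\Delta e_i(\ell)r_i(\ell)$ are controlled by Cauchy--Schwarz, the $L^p$-bounds on $R^d, R^m$, and the one-step estimate $|\Delta e_i(\ell)|\lesssim T/n\cdot(\text{integrable})$, each contributing $O(n^{-1})$. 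Plugging into the recursion for $|e(t_{k+1}^{(n)})|^{2}$ and applying the discrete Gronwall inequality concludes the argument.

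The main obstacle is the It\^o expansion of $1/X_{i,j}$. Because $X_{i,j}$ itself carries a drift containing neighbouring singular gaps $\gamma_{i,k}/(X_i-X_k)$ and $\gamma_{j,k}/(X_j-X_k)$, It\^o on $1/X_{i,j}$ produces products of up to three inverse-gap factors, and H\"older with exponents $(2,2)$ between $1/X_{i,j}^{2}$ and a simple gap $1/X_{i,k}$ (each raised to the $p$-th power) is precisely the source of the strengthened condition $\hat{p}=4p$, compared to $\hat{p}=3p$ in Theorem \ref{main_1}. Keeping the time-dependent weights $(u-t_{k}^{(n)})$ and $(t_{k+1}^{(n)}-u)$ inside the stochastic integrals without breaking $\mf_u$-predictability of the integrand (so that BDG is applicable and delivers the $(T/n)^{1/2}$ gain from the martingale piece), and in parallel establishing the uniform $L^{2p}$-moment estimate on the implicit scheme $X^{(n)}$, is the main bookkeeping challenge.
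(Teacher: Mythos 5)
Your proposal follows essentially the same route as the paper's proof: an It\^o expansion of the one-step remainder $r_i(k)$ into a bounded-variation part of $L^r$-size $O(n^{-2})$ and a martingale part of size $O(n^{-3/2})$, a rewriting of the recursion so that $r_i(k)$ is paired with the $\mathcal{F}_{t_k^{(n)}}$-measurable $e_i(t_k^{(n)})$, a Burkholder--Davis--Gundy bound for the resulting discrete martingale $\sum_{\ell<m}e_i(t_\ell^{(n)})R^m_i(\ell)$, and discrete Gronwall; your identification of the fourth inverse moment (hence $\hat p=4p$) as arising from the square of $\partial_m f_i \sim X_{i,j}^{-2}$ also matches the paper. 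The only cosmetic deviations are that the paper keeps the iterated integrals instead of invoking stochastic Fubini, and closes the recursion by feeding $\e[|e(t_j^{(n)})|^{2q}]$ directly into Gronwall rather than establishing a separate uniform moment lemma for the implicit scheme.
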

\begin{proof}
	For $\x =(x_1,\ldots, x_d) \in \Delta_d$, we  denote
	\begin{align*}
		f_{i}(\x)
		:=\sum_{j \neq i} \frac{\gamma_{i,j}}{x_i-x_j}.
	\end{align*}
	The first and second order derivatives of $f$  are given as follows:
	\begin{align*}
		\partial_m f_i(\x)
		= \frac{\partial f_i}{\partial x_m} :=  
		\left\{ \begin{array}{ll}
			\displaystyle -\sum_{j \neq i} \frac{\gamma_{i,j}}{(x_i-x_j)^2} &\text{ if } m=i,\\
			\displaystyle \frac{\gamma_{i,m}}{(x_i-x_m)^2}  &\text{ if } m \neq i,
		\end{array}\right.
	\end{align*}
	and
	\begin{align*}
		\partial_{\ell} \partial_m f_i(\x)
		:= \frac{\partial^2 f_i}{\partial x_\ell \partial x_m} = 
		\left\{ \begin{array}{ll}
			\displaystyle \sum_{j \neq i} \frac{2 \gamma_{i,j}}{(x_i-x_j)^3} &\text{ if } m=\ell=i,\\
			\displaystyle -\frac{2 \gamma_{i,\ell}}{(x_i-x_\ell)^3} &\text{ if } m=i, \ \ell \neq i,\\
			\displaystyle -\frac{2\gamma_{i,m}}{(x_i-x_m)^3}&\text{ if } m \not = i, \ell = i,\\
			\displaystyle \frac{2\gamma_{i,m}}{(x_i-x_m)^3}  &\text{ if } m \neq i, \ell = m,\\
			\displaystyle 0  &\text{ if } m \neq i, \ell \neq i, m \neq \ell.
		\end{array}\right.
	\end{align*}
	Recall that  for $k=0,\ldots,n-1$, we have
	\begin{align}\label{first_1}
		e_i(t_{k+1}^{(n)})
		=&e_i(t_{k}^{(n)})
		+\left\{
		f_i(X(t_{k+1}^{(n)})) - f_i(X^{(n)}(t_{k+1}^{(n)}))
		\right\} \frac{T}{n}\\
		&\quad 	+\left\{b_i(X(t_{k}^{(n)}))-b_i(X^{(n)}(t_{k}^{(n)}))\right\} \frac{T}{n}
		+r_i(k), \notag
	\end{align}
	and by using It\^o's formula, we have 
	\begin{align*}
		r_{i}(k)=r_{i}^{(1)}(k)+r_{i}^{(2)}(k)+r_{i}^{(3)}(k)+r_{i}^{(4)}(k),
	\end{align*}
	where
	\begin{align*}
		r_{i}^{(1)}(k)
		:=&\int_{t_{k}^{(n)} }^{t_{k+1}^{(n)}} \int_{t}^{t_{k+1}^{(n)}}
		h_{i}^{(1)}(X(s))
		\rd s\rd t, \quad
		r_{i}^{(2)}(k)
		:=\int_{t_{k}^{(n)} }^{t_{k+1}^{(n)}} \int_{t_{k}^{(n)}}^{t}
		h_{i}^{(2)}(X(s))
		\rd s\rd t,\\
		r_{i}^{(3)}(k)
		:=&\sum_{j=1}^{d} \int_{t_{k}^{(n)} }^{t_{k+1}^{(n)}} \int_{t}^{t_{k+1}^{(n)}}
		h_{i,j}^{(3)}(X(s))
		\rd W_j(s) \rd t, \\
		r_{i}^{(4)}(k)
		:=&\sum_{j=1}^{d} \int_{t_{k}^{(n)} }^{t_{k+1}^{(n)}} \int_{t_{k}^{(n)}}^{t}
		h_{i,j}^{(4)}(X(s))
		\rd W_{j}(s)\rd t,
	\end{align*}
	and for $\x=(x_1,\ldots,x_d) \in \Delta_d$,
	\begin{align*}
		h_{i}^{(1)}(\x)
		&:=-\sum_{m=1}^{d} \partial_{m} f_{i} (\x) (f_m(\x) + b_m(\x))
		-\sum_{m,k,k'=1}^d \frac{\sigma_{k,m}\sigma_{k',m}}{2}{\partial_k \partial_{k'}}f_i(\x) \\
		h_{i}^{(2)}(\x)
		&:=\sum_{m=1}^{d} \partial_{m} b_{i} (\x) (f_m(\x) + b_m(\x))
		+\sum_{m,k,k'=1}^d \frac{\sigma_{k,m}\sigma_{k',m}}{2}{\partial_k \partial_{k'}}b_i(\x) \\
		h_{i,j}^{(3)}(\x)
		&=-\sigma_{i,j} \sum_{m=1}^{d} \partial_{m} f_{i} (\x)
		\quad\text{and}\quad
		h_{i,j}^{(4)}(\x)
		=\sigma_{i,j} \sum_{m=1}^{d} \partial_{m} b_{i} (\x).
	\end{align*}
	From \eqref{first_1}, we have
	\begin{align*} 
		&\left|
		e_i(t_{k+1}^{(n)})
		-\left\{
		f_i(X(t_{k+1}^{(n)})) - f_i(X^{(n)}(t_{k+1}^{(n)}))
		\right\} \frac{T}{n}
		\right|^2\\
		&=
		\left|
		e_i(t_{k}^{(n)})
		+\left\{b_i(X(t_{k}^{(n)}))-b_i(X^{(n)}(t_{k}^{(n)}))\right\} \frac{T}{n}
		+r_i(k)
		\right|^2 \notag
	\end{align*}
	and thus
	\begin{align*}
		|e_i(t_{k+1}^{(n)})|^2
		=&
		|e_i(t_{k}^{(n)})|^2
		-\left| \left\{
		f_i(X(t_{k+1}^{(n)})) - f_i(X^{(n)}(t_{k+1}^{(n)}))
		\right\} \frac{T}{n} \right|^2\\
		&+\left|\left\{b_i(X(t_{k}^{(n)}))-b_i(X^{(n)}(t_{k}^{(n)}))\right\} \frac{T}{n}\right|^2
		+r_i(k)^2\\
		&+2 e_i(t_{k+1}^{(n)}) \left\{
		f_i(X(t_{k+1})) - f_i(X^{(n)}(t_{k+1}))
		\right\} \frac{T}{n}\\
		&+2 e_i(t_{k}^{(n)}) \left\{b_i(X(t_{k}^{(n)}))-b_i(X^{(n)}(t_{k}^{(n)}))\right\} \frac{T}{n}
		+2e_i(t_{k}^{(n)}) r_i(k)\\
		&+2\left\{b_i(X(t_{k}^{(n)}))-b_i(X^{(n)}(t_{k}^{(n)}))\right\} \frac{T}{n} r_i(k).
	\end{align*}
	Using the identity \eqref{id1}, the fact that  $e_j-e_i=X_{j,i}-X_{j,i}^{(n)}$, the Lipschitz continuity of $b_i$, the inequality $xy\leq x^2/2+y^2/2$ and the fact that $T/n \leq 1$, we get
	\begin{align*}
		|e(t_{k+1}^{(n)})|^2
		&\leq
		|e(t_{k}^{(n)})|^2
		+d \|b\|_{Lip}^2 |e(t_k^{(n)})|^2 \left( \frac{T}{n} \right)^2
		+2 d \|b\|_{Lip} |e(t_k^{(n)})|^2 \frac{T}{n}\\
		&\quad
		+2 \sum_{i=1}^{d} e_i(t_{k}^{(n)}) r_i(k)
		+2 \|b\|_{Lip} |e(t_{k}^{(n)})| \frac{T}{n} \sum_{i=1}^{d} r_i(k)
		+\sum_{i=1}^{d} r_i(k)^2\\
		&\leq
		|e(t_{k}^{(n)})|^2
		+C_2 |e(t_k^{(n)})|^2 \frac{T}{n}
		+2 \sum_{i=1}^{d} e_i(t_{k}^{(n)}) r_i(k)
		+\frac{3}{2}\sum_{i=1}^{d} r_i(k)^2,
	\end{align*}
	where $C_2:=d\{3\|b\|_{Lip}^2+2\|b\|_{Lip}\}$.
	Thus, we obtain
	\begin{align*}
		|e(t_{k}^{(n)})|^2
		\leq \sum_{j=0}^{k-1} \left\{
		C_2 |e(t_j^{(n)})|^2 \frac{T}{n}
		+2 \sum_{i=1}^{d} e_i(t_{j}^{(n)}) r_i(j)
		+\frac32 \sum_{i=1}^{d} r_i(j)^2
		\right\}.
	\end{align*}
	Hence for $p=2q\geq 2$, we have
	\begin{align}\label{first_1_1}
		\sup_{k=0,\ldots,\ell}|e(t_{k}^{(n)})|^{2q}
		&\leq 3^{q-1}C_2^{q} \sup_{k=0,\ldots,\ell} \left|
		\sum_{j=0}^{k-1} |e(t_j^{(n)})|^2 \frac{T}{n}
		\right|^{q} \\
		&\quad+ 3^{q-1}2^q \sup_{k=0,\ldots,\ell} \left|
		\sum_{j=0}^{k-1} \sum_{i=1}^{d} e_i(t_{j}^{(n)}) r_i(j)
		\right|^{q} \notag\\
		&\quad+ 3^{2q-1}2^{-q} \sup_{k=0,\ldots,\ell}
		\left|
		\sum_{j=0}^{k-1} \sum_{i=1}^{d} r_i(j)^2
		\right|^{q}. \notag
	\end{align}
	
	Now for $r \in [1,2q]$, we consider the upper bound of the $r$-th moment of $r_i^{(1)}(j), r_i^{(2)}(j), r_i^{(3)}(j)$ and $r_i^{(4)}(j)$.
	Using Jensen's inequality and the inequality $xy\leq x^2/2+y^2/2$, there exist $K_r^{(1)}, K_r^{(2)}, K_r^{(3)}$ and $K_r^{(4)}$ such that
	\begin{align*}
		&|h_{i}^{(1)}(\x)|^{r}	\leq (2d+d^3)^{r-1} \left\{ \sum_{m=1}^{d}|\partial_{m} f_{i} (\x)|^r (|f_m(\x)|^r + |b_m(\x)|^r) \right.\\
		&\qquad \left.	+\sum_{m,k,k'=1}^{d} |\sigma_{k,m}|^r|\sigma_{k',m}|^r|\partial_{k} \partial_{k'} f_i(\x)|^r \right\}\\
		&\leq
		(2d+d^3)^{r-1}\left\{ \sum_{m=1}^{d}(|\partial_m f_i(\x)|^{2r}+|f_m(\x)|^{2r}
		+\|b_m\|_{\infty}^{r}|\partial_{m}f_i(\x)|^{r}) \right.\\	
		&	\qquad + \left. \sum_{m,k,k'=1}^{d} |\sigma_{k,m}|^r|\sigma_{k',m}|^r|\partial_k \partial_{k'} f_i(\x)|^r \right\}\\
		&\leq K_r^{(1)}
		\left\{
		\sum_{j \neq i} \left|\frac{1}{x_i-x_j}\right|^{4r}
		+\sum_{j \neq i} \left|\frac{1}{x_i-x_j}\right|^{2r}
		+\sum_{j \neq i} \left|\frac{1}{x_i-x_j}\right|^{3r}
		\right\},
	\end{align*}
	and
	\begin{align*}
		&|h_i^{(2)}(\x)|^{r}\\
		&\leq (d+d^3)^{r-1}
		\left\{
		2^{r-1}
		\sum_{m=1}^{d}
		\|\partial_{m}b_i\|_{\infty}^{r} (|f_i(\x)|^r + \|b_i\|_{\infty}^{r})
		+\sum_{m,k,k'=1}^{d} \left|\frac{\sigma_{k,m} \sigma_{k',m}}{2}\right|^r
		\|\partial_k \partial_{k'} b_i\|_{\infty}^r
		\right\}\\
		&\leq K_r^{(2)}
		\left\{
		\sum_{m=1}^{d} \sum_{k \neq m} \left|\frac{1}{x_k-x_m}\right|^{r}
		+1
		\right\},
	\end{align*}
	and
	\begin{align*}
		&|h_{i,j}^{(3)}(\x)|^{r}
		\leq d^{r-1} |\sigma_{i,j}|^r \sum_{m=1}^{d} |\partial_m f_i(\x)|^{r}
		\leq K_r^{(3)} \sum_{j \neq i}
		\left|\frac{1}{x_i-x_j}\right|^{2r},
	\end{align*}
	and
	\begin{align*}
		|h_{i,j}^{(4)}(\x)|^{r}
		\leq d^{r-1} |\sigma_{i,j}|^r \sum_{m=1}^{d} \|\partial_{m} b_{i}\|_{\infty}^r
		\leq K_r^{(4)}.
	\end{align*}
	Thus, from Hypothesis \ref{Ip}, there exist $K_r^{(1,2)}$ and $K_r^{(3,4)}$ such that
	\begin{align*}
		&\e[|r_i^{(1)}(j)|^{r}]+\e[|r_i^{(2)}(j)|^{r}] \notag\\
		&\leq \left(\frac{T}{n}\right)^{2(r-1)}
		\int_{t_{k}^{(n)}}^{t_{k+1}^{(n)}} \rd t \int_{t_{k}^{(n)}}^{t_{k+1}^{(n)}}\rd s 
		\e\left[
		|h_{i}^{(1)}(X(s))|^{r}
		+|h_{i}^{(2)}(X(s))|^{r}
		\right]
		\leq K_r^{(1,2)} \left( \frac{T}{n} \right)^{2r}
	\end{align*}
	and by using Burkholder-Davis-Gundy's inequality,
	\begin{align*}
		&\e[|r_i^{(3)}(j)|^{r}]+\e[|r_i^{(4)}(j)|^{r}]\\	
		&\leq d^{r-1}\sum_{j=1}^{d} \left(\frac{T}{n}\right)^{r-1}
		\int_{t_{k}^{(n)} }^{t_{k+1}^{(n)}}
		\e\left[
		\left|
		\int_{t}^{t_{k+1}^{(n)}}
		h_{i,j}^{(3)}(X(s))
		\rd W_j(s) 
		\right|^r
		+
		\left|
		\int_{t_{k}^{(n)}}^{t}
		h_{i,j}^{(4)}(X(s))
		\rd W_j(s) 
		\right|^r
		\right]	
		\rd t \notag\\
		&\leq
		c_r d^{r-1}\sum_{j=1}^{d} \sum_{m'=3}^{4} \left(\frac{T}{n}\right)^{\frac{3r}{2}-2}
		\int_{t_{k}^{(n)} }^{t_{k+1}^{(n)}} \rd t \int_{t_{k}^{(n)} }^{t_{k+1}^{(n)}} \rd s
		\e\left[
		|h_{i,j}^{(m')}(X(s))|^{r}
		\right] \notag\\
		&\leq K_r^{(3,4)} \left( \frac{T}{n} \right)^{\frac{3r}{2}}. \notag
	\end{align*}
	
	Let $M_{k}:=\sum_{j=0}^{k-1} \sum_{i=1}^{d} e_i(t_{j}^{(n)}) \{r_i^{(3)}(j) + r_i^{(4)}(j)\}$.
	Then it follows from Hypothesis \ref{Ip} and the upper bound of $h_{i,m}^{(3)}(x)$ and $h_{i,j}^{(4)}(x)$ that
	\begin{align*}
		\e\left[M_k| \mathcal{F}_{t_{k-1}^{(n)}} \right]
		=M_{k-1}+\sum_{i=1}^{d} e_{i}(t_{k-1}) \e\left[r_i^{(3)}(k-1) + r_i^{(4)}(k-1) \Big| \mathcal{F}_{t_{k-1}^{(n)}} \right]
		=M_{k-1}.
	\end{align*}
	Hence $(M_k)_{k=1,\ldots,n}$ is a $(\mathcal{F}_{t_{k}^{(n)}})_{k=1,\ldots,n}$-martingale.
	By using Burkholder-Davis-Gundy's inequality, we have
	\begin{align*}
		\e\left[ \sup_{k=0,\ldots,\ell} |M_k|^q\right]
		&\leq c_q \e\left[ \left\{ \sum_{j=0}^{\ell-1} \sum_{i=1}^{d} |e_i(t_{j}^{(n)})|^2 |r_i^{(3)}(j) + r_i^{(4)}(j)|^2 \right\}^{q/2}\right] \notag\\
		&\leq 2^{q-1} d^{\frac{q}{2}-1} c_q \sum_{j=0}^{\ell-1} \sum_{i=1}^{d}  n^{\frac{q}{2}-1} \e\left[ |e_i(t_{j}^{(n)})|^{q} \{|r_i^{(3)}(j)|^q + |r_i^{(4)}(j)|^{q}\}\right].
	\end{align*}
	Therefore, by taking the expectation of \eqref{first_1_1}, we obtain
	\begin{align*}
		&\e\left[\sup_{k=0,\ldots,\ell}|e(t_{k}^{(n)})|^{2q}\right]\\
		&\leq 3^{q-1}C_2^{q} \e\left[\sup_{k=0,\ldots,\ell} \left|
		\sum_{j=0}^{k-1} |e(t_j^{(n)})|^2 \frac{T}{n}
		\right|^{q}\right]\\
		&\quad+ 3^{q-1}2^{2q-1} \e\left[\sup_{k=0,\ldots,\ell} \left|
		\sum_{j=0}^{k-1} \sum_{i=1}^{d} e_i(t_{j}^{(n)}) \{r_i^{(1)}(j)+r_i^{(2)}(j)\}
		\right|^{q} \right]
		+3^{q-1}2^{2q-1} \e\left[\sup_{k=0,\ldots,\ell} \left| M_k
		\right|^{q} \right]\\
		&\quad+ 3^{2q-1}2^{-q} \e\left[\sup_{k=0,\ldots,\ell}
		\left|
		\sum_{j=0}^{k-1} \sum_{i=1}^{d} r_i(j)^2
		\right|^{q}\right]\\
		&\leq
		3^{q-1}C_2^{q} \e\left[\sup_{k=0,\ldots,\ell} \left|
		\sum_{j=0}^{k-1} |e(t_j^{(n)})|^2 \frac{T}{n}
		\right|^{q}\right]\\
		&\quad+ 3^{q-1}2^{3q-2} d^{q-1} \sum_{j=0}^{\ell-1} \sum_{i=1}^{d} \sum_{m=1}^2 n^{q-1}
		\e\left[\left|e_i(t_{j}^{(n)})\right|^{q} |r_i^{(m)}(j)|^q \right]\\
		&\quad 	+3^{q-1}2^{3q-2} d^{\frac{q}{2}-1} c_q \sum_{j=0}^{\ell-1} \sum_{i=1}^{d}   \sum_{m=3}^4  n^{\frac{q}{2}-1} \e\left[ |e_i(t_{j}^{(n)})|^{q} |r_i^{(m)}(j)|^q \right]\\
		&\quad+ 3^{2q-1}2^{-q} d^{q-1} \sum_{j=0}^{\ell-1} \sum_{i=1}^{d} n^{q-1}\e\left[\left|r_i(j)\right|^{2q}\right]\\
		&=3^{q-1}C_2^{q} \e\left[\sup_{k=0,\ldots,\ell} \left|
		\sum_{j=0}^{k-1} |e(t_j^{(n)})|^2 \frac{T}{n}
		\right|^{q}\right]
		+I_{\ell}^{(1,2)}+I_{\ell}^{(3,4)}+J_{\ell}.
	\end{align*}
	From H\"older's inequality and the inequality $xy\leq x^2/2+y^2/2$, we have
	\begin{align*}
		I_{\ell}^{(1,2)}
		&\leq 3^{q-1}2^{3q-2} d^{q-1} \sqrt{K_{2q}^{(1,2)}} \sum_{j=0}^{\ell-1} \sum_{i=1}^{d} n^{q-1}
		\left(\e\left[\left|e_i(t_{j}^{(n)})\right|^{2q}\right]\right)^{1/2} \left( \frac{T}{n}\right)^{2q}\\
		&=(3T)^{q-1}2^{3q-2}d^{q-1} \sqrt{K_{2q}^{(1,2)}} \sum_{j=0}^{\ell-1} \sum_{i=1}^{d}
		\left(\e\left[\left|e_i(t_{j}^{(n)})\right|^{2q}\right]\right)^{1/2} \left( \frac{T}{n}\right)^{q+1}\\
		&\leq (3T)^{q-1}2^{3q-3} d^{q-1} \sqrt{K_{2q}^{(1,2)}} \sum_{j=0}^{\ell-1} \sum_{i=1}^{d} 
		\left\{
		\e\left[\left|e_i(t_{j}^{(n)})\right|^{2q}\right] \frac{T}{n}+\left( \frac{T}{n}\right)^{2q+1}
		\right\}\\
		&\leq \widetilde{K}_{2q}^{(1,2)}
		\left\{
		\sum_{j=0}^{\ell-1} \sum_{i=1}^{d} 
		\e\left[\left|e_i(t_{j}^{(n)})\right|^{2q}\right] \frac{T}{n}
		+\frac{1}{n^{2q}}
		\right\},
	\end{align*}
	for some constant $\widetilde{K}_{2q}^{(1,2)}$ and
	\begin{align*}
		I_{\ell}^{(3,4)}
		&\leq 3^{q-1}2^{3q-2} d^{\frac{q}{2}-1} c_q \sqrt{K_{2q}^{(3,4)}}\sum_{j=0}^{\ell-1} \sum_{i=1}^{d}   n^{\frac{q}{2}-1} \left(\e \left[ |e_i(t_{j}^{(n)})|^{2q} \right]\right)^{1/2} \left(\frac{T}{n}\right)^{3q/2}\\
		&=3^{q-1}2^{3q-2} (dT)^{\frac{q}{2}-1} c_q \sqrt{K_{2q}^{(3,4)}}\sum_{j=0}^{\ell-1} \sum_{i=1}^{d}\left( \e\left[ |e_i(t_{j}^{(n)})|^{2q} \right]\right)^{1/2} \left(\frac{T}{n}\right)^{q+1} \\
		&\leq 3^{q-1}2^{3q-3} d^{\frac{q}{2}-1} c_q \sqrt{K_{2q}^{(3,4)}} \sum_{j=0}^{\ell-1} \sum_{i=1}^{d} 
		\left\{
		\e\left[\left|e_i(t_{j}^{(n)})\right|^{2q}\right] \frac{T}{n}+\left( \frac{T}{n}\right)^{2q+1}
		\right\}\\
		&\leq \widetilde{K}_{2q}^{(3,4)}
		\left\{
		\sum_{j=0}^{\ell-1} \sum_{i=1}^{d} 
		\e\left[\left|e_i(t_{j}^{(n)})\right|^{2q}\right] \frac{T}{n}
		+\frac{1}{n^{2q}}
		\right\},
	\end{align*}
	for some constant $\widetilde{K}_{2q}^{(3,4)}$.
	Finally, we have
	\begin{align*}
		J_{\ell}
		&\leq 3^{2q-1} 2^{3q-2} d^{q-1} \sum_{j=0}^{\ell-1} \sum_{i=1}^{d} \sum_{m=1}^{4} n^{q-1}\e\left[\left|r_i^{(m)}(j)\right|^{2q}\right] \\
		&\leq 3^{2q-1} 2^{3q-2} (dT)^{q-1} \sum_{j=0}^{\ell-1} \sum_{i=1}^{d} \left\{ \widetilde{K}_{2q}^{(1,2)} \left(\frac{T}{n}\right)^{3q+1}+\widetilde{K}_{2q}^{(3,4)}\left(\frac{T}{n}\right)^{2q+1}\right\}\\
		&\leq \frac{\widetilde{K}_{2q}}{n^{2q}},
	\end{align*}
	for some $\widetilde{K}_{2q}$.
	
	Therefore, we obtain for some constant $C>0$ that 
	\begin{align*}
		\e\left[\sup_{k=0,\ldots,\ell}|e(t_{k}^{(n)})|^{2q}\right]
		\leq C \sum_{j=0}^{\ell-1} \e\left[|e(t_{j}^{(n)})|^{2q}\right] \frac{T}{n}
		+\frac{C}{n^{2q}}.
	\end{align*}
	By Gronwall's inequality, we conclude the proof.	
\end{proof}

\subsection{The case of general diffusion coefficient}

In this section, we develop the argument presented in the previous sections to establish the convergence in $L^2$-norm of the semi-implicit Euler-Maruyama scheme for equation \eqref{def:X} in the case that the diffustion coefficient $\sigma$ may depend on $X$. 


\begin{Thm}\label{main_3}
	Suppose that Hypothesis \ref{Ip} holds for $\widehat{p} = 6$.
	Then there exists $C>0$ which depends on $d$ such that for any $n \in \n$ with $T/n \leq 1$,
	\begin{align}\label{main_4_0}
		\sup_{k=1,\ldots,n}
		\e\left[\left|X(t_{k}^{(n)})-X^{(n)}(t_{k}^{(n)}) \right|^2\right]^{1/2}
		\leq \frac{C}{n^{1/2}},
	\end{align}
	and
	\begin{align}\label{main_4_1}
		\e\left[
		\sup_{k=1,\ldots,n}
		\left|
		X(t_{k}^{(n)})
		-X^{(n)}(t_{k}^{(n)})
		\right|^2
		\right]^{1/2}
		\leq \frac{C}{n^{1/4}}.
	\end{align}	
\end{Thm}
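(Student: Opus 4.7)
The plan is to extend the error analysis of Theorems \ref{pathwise1} and \ref{main_1} to cope with a state-dependent diffusion coefficient, for which the one-step error increment now contains a genuine martingale contribution
$
D_i(k):=\sum_{j=1}^{d}\{\sigma_{i,j}(X(t_k^{(n)}))-\sigma_{i,j}(X^{(n)}(t_k^{(n)}))\}\{W_j(t_{k+1}^{(n)})-W_j(t_k^{(n)})\},
$
and the residual $r_i(k)$ also carries a new stochastic-integral piece
$
r_i^{\mathrm{mart}}(k):=\sum_{j=1}^{d}\int_{t_k^{(n)}}^{t_{k+1}^{(n)}}\{\sigma_{i,j}(X(s))-\sigma_{i,j}(X(t_k^{(n)}))\}\,\mathrm{d}W_j(s),
$
on top of the deterministic-type correction $r_k^{\mathrm{det}}$ coming from the singular term and the drift. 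Writing $\Delta e=A+B+D+r$, where $A$ is the singular-interaction difference as in \eqref{def_ei}, and using the monotonicity identity \eqref{id1} in the form $\langle e(t_{k+1}^{(n)}),A\rangle\leq 0$ together with the algebraic expansion $|e(t_{k+1})|^2-|e(t_k)|^2=2\langle e(t_k),\Delta e\rangle+|\Delta e|^2$ and $-|A|^2\leq 0$, I would first derive the clean one-step estimate
\begin{align*}
|e(t_{k+1}^{(n)})|^2 \leq |e(t_k^{(n)})|^2 + 2\langle e(t_k^{(n)}),B_k+D_k+r_k\rangle + |B_k+D_k+r_k|^2,
\end{align*}
in which $e(t_k^{(n)})$ is $\mathcal{F}_{t_k^{(n)}}$-measurable on the right-hand side.

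Next I would assemble the one-step moment estimates. The Lipschitz properties give $|B_k|^2\leq d\|b\|_{Lip}^2 |e(t_k^{(n)})|^2(T/n)^2$ and $\e[|D_k|^2\mid\mathcal{F}_{t_k^{(n)}}]\leq d\|\sigma\|_{Lip}^2 |e(t_k^{(n)})|^2 T/n$ by the It\^o isometry. Hypothesis \ref{Ip} with $\widehat p=6$, together with the H\"older-type arguments already used in the proof of Theorem \ref{main_1}, yields $\e[|r_k^{\mathrm{det}}|^2]\leq C(T/n)^3$, while the It\^o isometry applied to the stochastic-integral piece gives $\e[|r_k^{\mathrm{mart}}|^2]\leq C(T/n)^2$.

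For \eqref{main_4_0} I would take the expectation of the one-step recursion. Because $e(t_k^{(n)})$ is $\mathcal{F}_{t_k^{(n)}}$-measurable, the cross terms $\e[\langle e(t_k^{(n)}),D_k\rangle]$ and $\e[\langle e(t_k^{(n)}),r_k^{\mathrm{mart}}\rangle]$ vanish, while the contributions from $B_k$ and $r_k^{\mathrm{det}}$ are handled by Young's inequality. This yields
$
\e[|e(t_{k+1}^{(n)})|^2]\leq (1+C_1T/n)\,\e[|e(t_k^{(n)})|^2] + C_2(T/n)^2,
$
and the discrete Gronwall lemma (applied as in the derivation of \eqref{est_sup_e}) delivers $\sup_k\e[|e(t_k^{(n)})|^2]\leq C/n$, proving \eqref{main_4_0}. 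For \eqref{main_4_1} I would iterate the recursion and take the supremum over $m$, which exposes the discrete martingale $M_m:=\sum_{k<m}\langle e(t_k^{(n)}),D_k+r_k^{\mathrm{mart}}\rangle$. The non-martingale sums $\sum_k|B_k+D_k+r_k|^2$ and $\sum_k|\langle e(t_k^{(n)}),B_k+r_k^{\mathrm{det}}\rangle|$ are each $O(1/n)$ in $L^1$ by \eqref{main_4_0} (together with Cauchy--Schwarz and the residual bounds above), and the Burkholder--Davis--Gundy inequality combined with Cauchy--Schwarz yields
\begin{align*}
\e\Bigl[\sup_m|M_m|\Bigr] \leq C\,\e\Bigl[\sup_k|e(t_k^{(n)})|^2\Bigr]^{1/2}\Bigl(\sum_k\e\bigl[|D_k|^2+|r_k^{\mathrm{mart}}|^2\bigr]\Bigr)^{1/2}.
\end{align*}
The inner sum is $O(1/n)$ by \eqref{main_4_0} and the moment bound on $r_k^{\mathrm{mart}}$, so $\e[\sup_m|M_m|]\leq C\,\e[\sup_k|e(t_k^{(n)})|^2]^{1/2}n^{-1/2}$. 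Plugging back into the iterated inequality and absorbing via Young's inequality closes the estimate and gives \eqref{main_4_1}.

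The main technical obstacle is precisely this control of the martingale supremum: one must avoid needing any uniform fourth-moment bound on the scheme $X^{(n)}$, which is not readily available from the assumptions, so BDG applied at the $L^1$ level---rather than a Doob-$L^2$ maximal inequality that would require estimating $\sum_k\e[|e(t_k^{(n)})|^4]$---is essential. Careful bookkeeping of which bounds are conditional on $\mathcal{F}_{t_k^{(n)}}$ versus unconditional, and feeding the pointwise estimate \eqref{main_4_0} into the BDG bound for $M_m$, is what allows Young's inequality to complete the argument.
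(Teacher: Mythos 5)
Your proof is correct. The derivation of the one-step inequality and of \eqref{main_4_0} coincides with the paper's argument: the same monotonicity trick (identity \eqref{id1}) eliminates the implicit singular term, the martingale cross terms vanish under expectation, the residuals are controlled exactly as in \eqref{pr_6}--\eqref{pr_9}, and discrete Gronwall closes the pointwise-in-$k$ bound. For \eqref{main_4_1}, however, you take a genuinely different route. The paper bounds the martingale sum $\sum_\ell \langle e(t_\ell^{(n)}), R_{\sigma}(\ell)+r_{\sigma}(\ell)\rangle$ crudely by absolute values and uses the independence of $e(t_\ell^{(n)})$ and the Brownian increments, which costs a factor $\e[|\Delta W_j|]\sim (T/n)^{1/2}$ per term; summing $n$ terms produces the factor $(nT)^{1/2}$ visible in the paper's final display, and feeding in \eqref{main_4_0} then yields only $\e[\sup_k|e(t_k^{(n)})|^2]\le C n^{-1/2}$, i.e.\ the stated rate $n^{-1/4}$. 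You instead keep the martingale structure and apply the Davis/BDG inequality at the $L^1$ level followed by Cauchy--Schwarz, giving $\e[\sup_m|M_m|]\le C\,\e[\sup_k|e(t_k^{(n)})|^2]^{1/2} n^{-1/2}$, after which Young's inequality absorbs the square root and yields $\e[\sup_k|e(t_k^{(n)})|^2]\le C/n$. This is strictly stronger than \eqref{main_4_1}: you obtain the rate $n^{-1/2}$ rather than $n^{-1/4}$ for the supremum error, and, as you note, without needing any fourth-moment control of the scheme. The only point worth making explicit is that the Young absorption requires $\e[\sup_k|e(t_k^{(n)})|^2]<\infty$ a priori for each fixed $n$; this follows by induction on $k$ from the one-step inequality (or from the $1$-Lipschitz dependence of the implicit step on its input, which is a consequence of the uniqueness argument in Proposition \ref{IEM_sol}), and the same finiteness is implicitly used in the paper's own Gronwall step.
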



\begin{proof}
	We first recall that $f_{i}(\x):=\sum_{j \neq i} \frac{\gamma_{i,j}}{x_i-x_j}$.
	It follows from \eqref{def_ei} that
	\begin{align*}
		&\left|
		e_i(t_{k+1}^{(n)})
		-\left\{
		f_i(X(t_{k+1}^{(n)})) - f_i(X^{(n)}(t_{k+1}^{(n)}))
		\right\}
		\frac{T}{n}
		\right|^2
		=
		\left|
		e_i(t_{k}^{(n)})
		+R_i(k)
		\right|^2,
	\end{align*}
	where $R_i(k):=R_{b,i}(k)+R_{\sigma,i}(k)+r_i(k)$ and
	\begin{align*}
		R_{b,i}(k)
		&:=
		\left\{
		b_i(X(t_{k}^{(n)}))-b_i(X^{(n)}(t_{k}^{(n)}))
		\right\}
		\frac{T}{n},\\
		R_{\sigma,i}(k)
		&:=\sum_{j=1}^{d}
		\left\{
		\sigma_{i,j}(X(t^{(n)}_{k}))
		-\sigma_{i,j}(X^{(n)}(t^{(n)}_{k}))
		\right\}
		\{W_j(t_{k+1}^{(n)})-W_j(t_{k}^{(n)})\},
	\end{align*}
	and $r_i(k)$ is defined by \eqref{def_rik}. Thus we have
	\begin{align*}
		|e_i(t_{k+1}^{(n)})|^2
		=&
		|e_i(t_{k}^{(n)})|^2
		-\left| \left\{
		f_i(X(t_{k+1}^{(n)})) - f_i(X^{(n)}(t_{k+1}^{(n)}))
		\right\} \frac{T}{n} \right|^2\\
		&+2 e_i(t_{k+1}^{(n)}) \left\{
		f_i(X(t_{k+1})) - f_i(X^{(n)}(t_{k+1}))
		\right\} \frac{T}{n}\\
		&+2e_i(t_{k}^{(n)}) R_i(k)
		+R_i(k)^2.
	\end{align*}
	Using the identity \eqref{id1}, the fact that  $e_j-e_i=X_{j,i}-X_{j,i}^{(n)}$, the Lipschitz continuity of $b_i$ and $T/n \leq 1$, we get
	\begin{align}\label{main3_1}
		&|e(t_{k+1}^{(n)})|^2
		\leq 
		|e(t_{k}^{(n)})|^2
		+2\sum_{i=1}^{d} e_i(t_{k}^{(n)}) R_i(k)
		+\sum_{i=1}^{d} R_i(k)^2 \\
		&\leq
		|e(t_{k}^{(n)})|^2
		+|e(t_{k}^{(n)})| \sum_{i=1}^{d} |e_i(t_{k}^{(n)})| \frac{2\|b\|_{Lip} T}{n}
		+2\sum_{i=1}^{d} e_i(t_{k}^{(n)}) R_{\sigma,i}(k)
		+2\sum_{i=1}^{d} e_i(t_{k}^{(n)}) r_{i}(k)  \notag\\
		&\quad
		+|e(t_{k}^{(n)})|^2 \frac{3d\|b\|_{Lip}^2T^2}{n^2}
		+3\sum_{i=1}^{d} R_{\sigma,i}(k)^2
		+3\sum_{i=1}^{d} r_{i}(k)^2  \notag\\
		&\leq
		|e(t_{k}^{(n)})|^2
		+C_3 |e(t_{k}^{(n)})|^2 \frac{T}{n}
		+2\sum_{i=1}^{d} e_i(t_{k}^{(n)}) R_{\sigma,i}(k)
		+2\sum_{i=1}^{d} e_i(t_{k}^{(n)}) r_{i}(k)  \notag\\
		&\quad+3\sum_{i=1}^{d} R_{\sigma,i}(k)^2
		+3\sum_{i=1}^{d} r_{i}(k)^2,  \notag
	\end{align}
	where $C_3:=2d\|b\|_{Lip}+3d\|b\|_{Lip}^2$.
	Because of the independent increment property of Brownian motion $W$, the expectation of $e_i(t_{k}^{(n)}) R_{\sigma,i}(k)$ equals to zero.
	Therefore, by taking the expectation in \eqref{main3_1} and using the Lipschitz continuity of $\sigma_{i,j}$, we obtain
	\begin{align*}
		\e\left[
		|e(t_{k+1}^{(n)})|^2
		\right]
		&\leq
		\e\left[
		|e(t_{k}^{(n)})|^2
		\right]
		+C_4 \e\left[
		|e(t_{k}^{(n)})|^2
		\right]
		\frac{T}{n}\\
		&\quad+2\sum_{i=1}^{d}
		\e\left[
		e_i(t_{k}^{(n)}) r_{i}(k)
		\right]
		+3
		\e\left[
		|r(k)|^2
		\right],
	\end{align*}
	where $C_4:=C_3+d^3 \|\sigma\|_{Lip}^2$.
	Thus we have for any $k=1,\ldots,n$,
	\begin{align}
		\label{pr_5}
		&\e\left[
		|e(t_{k}^{(n)})|^2
		\right]\\
		&\leq
		C_4 
		\sum_{\ell=0}^{k-1}
		\e\left[
		|e(t_{\ell}^{(n)})|^2
		\right]
		\frac{T}{n}
		+2\sum_{\ell=0}^{k-1}
		\sum_{i=1}^{d}
		\e\left[
		e_i(t_{\ell}^{(n)}) r_{i}(\ell)
		\right]
		+3\sum_{\ell=0}^{k-1}
		\e\left[
		|r(\ell)|^2
		\right]. \notag
	\end{align}
	Recall that $r_i(k)=r_{f,i}(k)+r_{b,i}(k)+r_{\sigma,i}(k)$, where
	\begin{align*}
		r_{f,i}(k)
		&:=
		\int_{t_{k}^{(n)}}^{t_{k+1}^{(n)}}
		\left\{
		f_i(X(s))-f_i(X(t_{k}^{(n)}))
		\right\}
		\mathrm{d}s, \\
		r_{b,i}(k)
		&:=\int_{t_{k}^{(n)}}^{t_{k+1}^{(n)}}
		\left\{
		b_i(X(s))-b_i(X(t_{k}^{(n)}))
		\right\}
		\mathrm{d}s, \\
		r_{\sigma,i}(k)
		&:=
		\sum_{j=1}^{d}
		\int_{t_{k}^{(n)}}^{t_{k+1}^{(n)}}
		\left\{
		\sigma_{i,j}(X(s)) -\sigma_{i,j}(X(t_{k}^{(n)}))
		\right\}
		\mathrm{d}W_j(s).
	\end{align*}
	
	We now estimate the expectation of $|r(\ell)|^2$.
	By using H\"older's inequality and Hypothesis \ref{Ip} with $\widehat{p}=6$, we have
	\begin{align}\label{pr_6}
		&\e\left[|r_{f}(\ell)|^2\right]
		=\sum_{i=1}^{d}\e\left[|r_{f,i}(\ell)|^2\right]\\
		&\leq \sum_{i=1}^{d} \frac{T}{n}
		\int_{t_{\ell}^{(n)}}^{t_{\ell+1}^{(n)}}
		\e\left[
		\left|
		f_i(X(s))
		-f_i(X(t_{\ell+1}^{(n)}))
		\right|^2
		\right]
		\mathrm{d}s \notag\\
		&\leq \sum_{i=1}^{d} \sum_{j \neq i}
		\frac{(d-1) \gamma_{i,j}^2 T}{n}
		\int_{t_{\ell}^{(n)}}^{t_{\ell+1}^{(n)}}
		\e\left[
		\frac{
			\left|X_{i,j}(s) -X_{i,j}(t_{\ell+1}^{(n)}) \right|^2
		}
		{
			\left|X_{i,j}(s)\right|^2\left|X_{i,j}(t_{\ell+1}^{(n)}) \right|^2
		}
		\right] \mathrm{d}s \notag\\
		&\leq \sum_{i=1}^{d} \sum_{j \neq i}
		\frac{(d-1) \gamma_{i,j}^2 T}{n}  \notag\\
		&\times \int_{t_{\ell}^{(n)}}^{t_{\ell+1}^{(n)}}
		\e\left[\left|X_{i,j}(s) -X_{i,j}(t_{\ell+1}^{(n)}) \right|^6\right]^{\frac{1}{3}}
		\e\left[ \left|X_{i,j}(s)\right|^{-6} \right]^{\frac{1}{3}}
		\e\left[ \left|X_{i,j}(t_{\ell+1}^{(n)}) \right|^{-6} \right]^{\frac{1}{3}}
		\mathrm{d}s \notag\\
		&\leq
		\sum_{i=1}^{d} \sum_{j \neq i}
		4\hat{C}(d-1) \gamma_{i,j}^2
		\left(
		\frac{T}{n}
		\right)^{3}
		=C_{f}
		\left(
		\frac{T}{n}
		\right)^{3}
		\notag.
	\end{align}
	From the Lipschitz continuity of $b_i$ for each $i=1,\ldots,d$ and Jensen's inequality, we have
	\begin{align}\label{pr_8}
		\e\left[
		|r_{b}(\ell) |^2
		\right]
		&=\sum_{i=1}^{d}
		\e\left[
		|r_{b,i}(\ell)|^2
		\right]\\
		&\leq \frac{d \|b\|_{Lip}^2 T}{n}
		\int_{t_{\ell}^{(n)}}^{t_{\ell+1}^{(n)}}
		\e\left[
		|X(s) - X(t_{\ell}^{(n)})|^2
		\right]
		\mathrm{d}s \notag\\
		& \leq d\widehat{C}^{1/3} \|b\|_{Lip}^2
		\left(
		\frac{T}{n}
		\right)^{3}
		=C_b
		\left(
		\frac{T}{n}
		\right)^{3}
		\notag.
	\end{align}
	From Burkholder-Davis-Gundy's inequality and the Lipschitz continuity of $\sigma_{i,j}$, there exists $c_2>0$ such that
	\begin{align}\label{pr_9}
		\e\left[
		\left|
		r_{\sigma}(\ell)
		\right|^2
		\right]
		&=\sum_{i=1}^{d}
		\e\left[
		\left|
		r_{\sigma,i}(\ell)
		\right|^2
		\right] \\
		&\leq d^3 c_2 \|\sigma\|_{Lip}^2
		\int_{t_{\ell}^{(n)}}^{t_{\ell+1}^{(n)}}
		\e\left[
		|X(s) - X(t_{\ell}^{(n)})|^2
		\right]
		\mathrm{d}s\notag\\
		&\leq
		d^3 c_2 \hat{C}^{1/3} \|\sigma\|_{Lip}^2
		\left(
		\frac{T}{n}
		\right)^2
		=
		C_{\sigma}
		\left(
		\frac{T}{n}
		\right)^{2}. \notag
	\end{align}
	
	Next we consider $\sum_{i=1}^{d}\e[e_i(t_{\ell}^{(n)}) r_{i}(\ell)]$.
	Since $e_i(t_{\ell}^{(n)})$ is $\mathcal{F}_{t_{\ell}^{(n)}}$-measurable and the conditional expectation $\e[ r_{\sigma,i}(\ell) ~|~\mathcal{F}_{t_{\ell}^{(n)}} ]$ equals to zero for each $i=1,\ldots,n$, we obtain
	\begin{align*}
		\sum_{i=1}^{d}
		\e[
		e_i(t_{\ell}^{(n)}) r_{\sigma, i}(\ell)
		]
		=\sum_{i=1}^{d}
		\e\left[
		e_i (t_{\ell}^{(n)})
		\e\left[
		r_{\sigma,i}(\ell)
		~\Big|~
		\mathcal{F}_{t_{\ell}^{(n)}}
		\right]
		\right]
		=0.
	\end{align*}
	Hence, from \eqref{pr_6} and \eqref{pr_8}  and the  inequality $xy \leq x^2/2 + y^2/2$, we have
	\begin{align}
		\label{pr_10}
		\sum_{i=1}^{d}
		\e[
		e_i(t_{\ell}^{(n)})
		r_{i}(\ell)
		]
		&=\sum_{i=1}^{d}
		\e[
		e_i(t_{\ell}^{(n)})
		(
		r_{f, i}(\ell)
		+r_{b, i}(\ell)
		)
		]\\
		&\leq
		\frac{1}{2}
		\e\left[
		|e_i(t_{\ell}^{(n)})|^2
		\right]
		\frac{T}{n}
		+\frac{1}{2} \sum_{i=1}^{d}
		\e\left[
		|r_{f, i}(\ell)+r_{b, i}(\ell)|^2
		\right]
		\frac{n}{T} \notag\\
		&\leq
		\frac{1}{2}\e\left[|e_i(t_{\ell}^{(n)})|^2\right] \frac{T}{n}
		+(C_f+C_b)
		\left(
		\frac{T}{n}
		\right)^{2}. \notag
	\end{align}

	Therefore, it follows from \eqref{pr_5},  \eqref{pr_6}, \eqref{pr_8}, \eqref{pr_9}, \eqref{pr_10} and the fact $T/n\leq 1$ that, for each $k=1,\ldots n$,
	\begin{align*}
		&\e\left[
		|e(t_{k}^{(n)})|^2
		\right]\\
		&\leq
		(C_4+1)
		\sum_{\ell=0}^{k-1}
		\e\left[
		|e(t_{\ell}^{(n)})|^2
		\right]
		\frac{T}{n}
		+\frac{2(C_f+C_b)T^2}{n}
		+\frac{6(C_f+C_b+C_{\sigma})T^2}{n}.
	\end{align*}
	Using discrete type Gronwall's inequality (e.g. Chapter XIV, Theorem 1 and Remark 1,2 in \cite{MiPeFi}, page 436-437), we obtain \eqref{main_4_0}.

	

	Now we prove \eqref{main_4_1}. 
	It follows from \eqref{main3_1}, the Lipschitz continuity of $\sigma_{i,j}$ and Schwarz's inequality that
	\begin{align*}
		\sup_{k=1,\ldots,n}|e(t_{k}^{(n)})|^2
		&\leq
		C_3 \sum_{\ell=1}^{n-1}
		|e(t_{\ell}^{(n)})|^2 \frac{T}{n} \\
		&\quad
		+2\|\sigma\|_{Lip}
		\sum_{\ell=1}^{n-1}
		\sum_{j=1}^{d}
		|e(t_{\ell}^{(n)})|^2
		|W_j(t_{\ell+1}^{(n)})-W_j(t_{\ell}^{(n)})|\notag\\
		&\quad
		+3\|\sigma\|_{Lip}^2\sum_{\ell=1}^{n-1}
		\sum_{j=1}^{d}
		|e(t_{\ell}^{(n)})|^2
		|W_j(t_{\ell+1}^{(n)})-W_j(t_{\ell}^{(n)})|^2 \notag\\
		&\quad
		+2\sum_{\ell=1}^{n-1}
		|e(t_{\ell}^{(n)})| |r(\ell)|
		+3\sum_{\ell=1}^{n-1}
		|r(\ell)|^2.  \notag
	\end{align*}
	Since the random variables $e(t_{\ell}^{(n)})$ and $W_j(t_{\ell+1}^{(n)})-W_j(t_{\ell}^{(n)})$ are independent, by taking the expectation and by using H\"older inequality, $T/n\leq 1$, \eqref{pr_6}, \eqref{pr_8} and \eqref{pr_9}, we have
	\begin{align*}
		&\e\left[
		\sup_{k=1,\ldots,n}|e(t_{k}^{(n)})|^2
		\right] \label{sup_pr3}\\
		&\leq
		\left\{
		C_3T
		+2d \|\sigma\|_{Lip} (nT)^{1/2}
		+3d \|\sigma\|_{Lip}^2 \frac{T}{n}
		\right\}
		\sup_{k=1,\ldots,n}
		\e\left[
		|e(t_{k}^{(n)})|^2
		\right]
		\notag\\
		&\quad
		+2\cdot 3^{1/2} (C_f+C_b+C_{\sigma})^{1/2}T
		\sup_{k=1,\ldots,n}
		\e\left[
		|e(t_{k}^{(n)})|^2
		\right]^{1/2} \notag\\
		\notag
		&\quad
		+\frac{9(C_f+C_b+C_{\sigma})T^2}{n}.
	\end{align*}
	This estimate together with \eqref{main_4_0} implies \eqref{main_4_1}.
\end{proof}


\section{Examples} \label{Sec:moment}
In this section, we will study some classes of SDEs \eqref{def:X} which have a unique non-colliding strong solution satisfying Hypothesis \ref{Ip}.
Note that under Assumptions (A1)-(A4), the coefficients of equation \eqref{def:X} are locally Lipschitz continuous on $\Delta_d$. Therefore, given $X(0)\in \Delta_d$, equation \eqref{def:X} has a unique strong local solution up to the stopping time 
\begin{equation} \label{def:tau}
	\tau=\inf\{t > 0: \min_{1\leq i \leq d-1}|X_{i+1}(t) - X_i(t)| = 0 \text{ or } \max_{1 \leq i \leq d}|X_i(t)| = \infty\}.
\end{equation}
In order to show the existence and uniqueness of global solution to equation \eqref{def:X}, it is sufficient to prove that $\tau = \infty$ almost surely. 

\subsection{Interacting Brownian particles}
We consider the following interacting Brownian particle systems
\begin{align} \label{def:ibp}
	\mathrm{d}X_i(t)
	=\left\{
	\sum_{j \not = i}\frac{\gamma}{X_i(t) - X_j(t)}
	+ b_i(X_i(t))
	\right\}dt
	+ \sum_{j=1}^d \sigma_{i,j}(X(t))\mathrm{d}W_j(t),
	\ i=1,\ldots,d,
\end{align}
with $X(0) \in \Delta_d=\{(x_1,\ldots,x_d)^{*} \in \real^d: x_1 < x_2 <\cdots < x_d\}$.

\begin{Ass}\label{Ass_2}
	Suppose that the domain of the drift coefficient $b$ is $\real$ and it holds that $b_i(x) \leq b_{i+1}(x)$ for any $x \in \real$.
\end{Ass}

These systems contain several classes of well-known particle systems such as the Dyson Brownian particle systems, Dyson-Ornstein-Uhlenbeck process, and the systems considered by  C\'epa and L\'epingle \cite{CepaLepingle}.
Graczyk and Malecki \cite{GrMa14} studied a  class of non-colliding particle systems satisfying condition   $\sigma_{i,j}(x) = \delta_{i,j}\sigma_i(x_i)$, where $\delta_{i,j}$ is the Dirac delta function.
In particular, they obtained the following result.   
\begin{Prop}[\cite{GrMa14}, Corollary 6.2]
	Suppose that 
	\begin{itemize}
		\item  $\sigma_{i,j}(x) = \delta_{i,j}\sigma_i(x_i)$ where $\sigma_i$ be at least $1/2$-H\"older and $\sigma_i^2(x) \leq 2 \gamma$;
		\item $b_i$ be Lipschitz and $b_i(x) \leq b_{i+1}(x)$, $b_i(x) x \leq c(1+|x|^2)$.
	\end{itemize}
	Then the system \eqref{def:ibp} has a unique strong solution in $\Delta_d$ for all $t>0$.
\end{Prop}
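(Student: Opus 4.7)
The plan is to prove $\tau = \infty$ almost surely, where $\tau$ is the explosion/collision time defined in \eqref{def:tau}. Since the coefficients of \eqref{def:ibp} are locally Lipschitz on $\Delta_d$, a unique strong local solution exists up to $\tau$, and the task reduces to ruling out both explosion and collision in finite time. I would handle explosion of $|X|$ first: It\^o's formula applied to $|X|^2$ collapses the interaction term by antisymmetrization,
\[
\sum_i X_i \sum_{j\neq i}\frac{\gamma}{X_i - X_j} = \frac{\gamma}{2}\sum_{i\neq j}\!\left(\frac{X_i}{X_i-X_j}+\frac{X_j}{X_j-X_i}\right) = \gamma\binom{d}{2},
\]
so combining with $b_i(x)x \leq c(1+|x|^2)$ and $\sigma_i^2 \leq 2\gamma$ produces a linear Gronwall bound on $\e[|X(t\wedge\tau_n)|^2]$ along localizing stopping times $\tau_n \uparrow \tau$.

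The core of the argument is a Lyapunov function to rule out collisions. I would take $V(\x) = -\sum_{i<j}\log(x_j - x_i)$, which blows up precisely when two particles meet. A direct computation gives $\partial_k V = -\sum_{j\neq k}(x_k - x_j)^{-1}$ and $\partial_k^2 V = \sum_{j\neq k}(x_k-x_j)^{-2}$; mixed second derivatives are irrelevant because $\sigma$ is diagonal. It\^o's formula then yields $dV(X(t)) = A(X(t))\,dt + dM(t)$ for a local martingale $M$, and the interaction piece of the drift equals
\[
-\gamma \sum_k \!\left(\sum_{j\neq k}\frac{1}{X_k-X_j}\right)^{\!2} = -2\gamma\sum_{i<j}\frac{1}{(X_j-X_i)^2},
\]
where the off-diagonal ``three-body'' terms in the expanded square cancel via the Lagrange identity
\[
\frac{1}{(a-b)(a-c)} + \frac{1}{(b-a)(b-c)} + \frac{1}{(c-a)(c-b)} = 0.
\]
The It\^o correction contributes $\tfrac12\sum_{i<j}(\sigma_i^2+\sigma_j^2)/(X_j-X_i)^2$, and the hypothesis $\sigma_i^2 \leq 2\gamma$ is exactly what makes the combined singular term
\[
\sum_{i<j}\frac{-2\gamma + (\sigma_i^2+\sigma_j^2)/2}{(X_j-X_i)^2} \leq 0,
\]
mirroring the non-attainment threshold for a Bessel process of dimension $\geq 2$.

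The drift from $b$ reduces by pairwise symmetrization to $-\sum_{i<j}(b_i(X_i) - b_j(X_j))/(X_i - X_j)$. Splitting $b_i(X_i) - b_j(X_j) = [b_i(X_i) - b_i(X_j)] + [b_i(X_j) - b_j(X_j)]$, Lipschitz continuity controls the first bracket while the monotonicity assumption $b_i \leq b_j$ for $i<j$ gives the correct sign on the second (because $X_i - X_j < 0$), so each summand is bounded above by $\|b\|_{Lip}$. Consequently $A(X(t)) \leq \binom{d}{2}\|b\|_{Lip}$ deterministically. Localizing by $\tau_n = \inf\{t : V(X(t))\vee|X(t)| \geq n\}$, taking expectations in $V(X(t\wedge\tau_n)) \leq V(X(0)) + Ct + M(t\wedge\tau_n)$ and applying Markov's inequality yield $\p(\tau_n \leq T) \leq (V(X(0))+CT)/n \to 0$. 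Together with the $|X|^2$ bound this forces $\tau = \infty$ a.s., and pathwise uniqueness on $\Delta_d$ then follows from local Lipschitz continuity on the set where $V$ is finite. The hardest part will be the Lagrange cancellation of the three-body terms combined with the sharp threshold $\sigma_i^2 \leq 2\gamma$: dropping either leaves an unsigned $(X_j-X_i)^{-2}$ singularity in $dV$ and the whole scheme breaks down.
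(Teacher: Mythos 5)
The paper does not prove this proposition at all: it is quoted verbatim from Graczyk and Ma\l ecki \cite{GrMa14}, Corollary 6.2, so there is no internal proof to compare against. Your non-collision argument is the classical Lyapunov scheme and the computations check out: the Lagrange cancellation of the three-body terms, the It\^o correction $\tfrac12\sum_{i<j}(\sigma_i^2+\sigma_j^2)/(x_j-x_i)^2$ matched against $-2\gamma\sum_{i<j}(x_j-x_i)^{-2}$ under $\sigma_i^2\leq 2\gamma$, and the sign of the drift contribution coming from $b_i\leq b_{i+1}$ are all right. (Before taking expectations you must control the negative part of $V$, since $-\log(x_j-x_i)$ is unbounded below; your $|X|^2$ moment bound gives $V\geq -C(1+|X|^2)$, so this is only a presentational gap. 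Likewise $\p(\tau_n\leq T)$ must be split according to whether $\tau_n$ is triggered by $V\geq n$ or by $|X|\geq n$.)

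The genuine gap is at the very first and very last steps. Under the hypotheses of this proposition $\sigma_i$ is only assumed $1/2$-H\"older, so the coefficients of \eqref{def:ibp} are \emph{not} locally Lipschitz on $\Delta_d$: you cannot invoke standard theory to get a ``unique strong local solution up to $\tau$'', nor conclude at the end that ``pathwise uniqueness follows from local Lipschitz continuity on the set where $V$ is finite''. This is precisely the difficulty that \cite{GrMa14} is built to overcome: pathwise uniqueness for the diagonal system $\sigma_{i,j}(x)=\delta_{i,j}\sigma_i(x_i)$ with H\"older-$1/2$ diffusion requires a Yamada--Watanabe-type argument applied to the difference of two solutions, combined with the dissipativity of the interaction drift (the same monotonicity identity that appears in the uniqueness part of Proposition \ref{IEM_sol}); existence of a strong solution then needs a separate construction rather than Picard iteration. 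Your Lyapunov computation correctly rules out collision and explosion of a solution \emph{once one exists}, but it neither produces the solution nor its uniqueness under these hypotheses. Note also that the paper's standing Assumption (A4) requires Lipschitz $\sigma$, which is strictly stronger than what this proposition assumes; under (A4) your opening step would be legitimate, but you would then be proving a weaker statement than the one quoted.
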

In the following we will establish a sufficient condition for the existence and uniqueness of a solution to equation \eqref{def:ibp}. Moreover, we show that Hypothesis \ref{Ip} holds under a certain condition on $\gamma, \sigma$ and $d$.

We need the following elementary inequality. 

\begin{Lem} \label{Lem:aux_1}
	For any $d \geq 2, \ p \geq 0$ and $(x_1,\ldots, x_d) \in \Delta_d$, it holds that
	$$  \sum_{i=1}^{d-1} \sum_{k \not = i, i+1} \frac{1}{(x_{i+1} - x_i)^p(x_{i+1} - x_k)(x_i - x_k)}   
	< \Big(2 - \frac3d\Big)\sum_{i=1}^{d-1} \frac{1}{(x_{i+1} - x_i)^{p+2}} .$$
\end{Lem}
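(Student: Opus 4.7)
The starting point is the partial fraction identity
$$\frac{1}{(x_{i+1}-x_k)(x_i - x_k)} = \frac{1}{y_i}\left(\frac{1}{x_i - x_k} - \frac{1}{x_{i+1} - x_k}\right),$$
where $y_i := x_{i+1} - x_i > 0$; both sides are positive for $k \notin \{i, i+1\}$ since then $x_i - x_k$ and $x_{i+1} - x_k$ share a sign. Multiplying by $y_i^{-p}$ and summing, the left-hand side rewrites as $\sum_{i=1}^{d-1} y_i^{-(p+1)} T_i$, where $T_i := \sum_{k \neq i, i+1}\bigl[\tfrac{1}{x_i - x_k} - \tfrac{1}{x_{i+1}-x_k}\bigr]$.

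Next I would complete the bracketed sums defining $T_i$: introducing $g(j) := \sum_{k \neq j}\tfrac{1}{x_j - x_k}$ and adding back the missing indices $k=i+1$ to the first sum (contributing $-\tfrac{1}{y_i}$) and $k=i$ to the second (contributing $+\tfrac{1}{y_i}$), I obtain the identity $T_i = \tfrac{2}{y_i} + g(i) - g(i+1)$. This gives the clean decomposition
$$\mathrm{LHS} = 2\sum_{i=1}^{d-1}\frac{1}{y_i^{p+2}} - \sum_{i=1}^{d-1}\frac{g(i+1)-g(i)}{y_i^{p+1}},$$
which reduces the lemma to the strict positivity estimate
$$\sum_{i=1}^{d-1}\frac{g(i+1)-g(i)}{y_i^{p+1}} > \frac{3}{d}\sum_{i=1}^{d-1}\frac{1}{y_i^{p+2}}. \qquad (\dagger)$$

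To prove $(\dagger)$, I would first verify that each increment $g(i+1) - g(i)$ is positive; an explicit calculation shows it equals $\tfrac{2}{y_i} - y_i \sum_{k \neq i, i+1}\tfrac{1}{(x_i-x_k)(x_{i+1}-x_k)}$, and the pointwise bound follows from applying the identity above term by term together with the inequality $\tfrac{1}{ab} \leq \tfrac12(\tfrac{1}{a^2}+\tfrac{1}{b^2})$ (AM--GM). The quantitative improvement to the constant $\tfrac{3}{d}$ would then come from an Abel summation in $i$ with weights $y_i^{-(p+1)}$, combined with the anti-symmetry identity $\sum_{j=1}^d g(j) = 0$; this ``mean-zero'' constraint is precisely what forces the telescoping sum $\sum_i [g(i+1) - g(i)] = g(d) - g(1)$ to concentrate a non-trivial mass at the endpoints, yielding an averaging factor of order $1/d$.

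I expect the main obstacle to be the last quantitative step: converting the qualitative positivity of $g(i+1)-g(i)$ into the sharp coefficient $\tfrac{3}{d}$. This is where the proof will require the most delicate bookkeeping --- likely a decomposition of the double sum according to the span of the triple $\{i, i+1, k\}$, with the inequality $\tfrac{1}{ab} \leq \tfrac12(\tfrac{1}{a^2}+\tfrac{1}{b^2})$ applied selectively to long-range factors (where it is strict by a margin controlled by $y_i^2 > 0$) while the nearest-neighbour contributions are treated exactly by the $d=3$ case of the lemma, in which the bound reduces by Young's inequality to the elementary identity $\tfrac{1}{y_1^py_2(y_1+y_2)}+\tfrac{1}{y_2^py_1(y_1+y_2)} < \tfrac{1}{y_1^{p+2}}+\tfrac{1}{y_2^{p+2}}$.
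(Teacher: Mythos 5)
Your opening reduction is correct and is genuinely different from (and cleaner than) the paper's starting point: the partial-fraction identity, the completion to $g(j)=\sum_{k\neq j}(x_j-x_k)^{-1}$, and the resulting equivalence of the lemma with
\begin{equation*}
\sum_{i=1}^{d-1}\frac{g(i+1)-g(i)}{y_i^{p+1}}\;>\;\frac{3}{d}\sum_{i=1}^{d-1}\frac{1}{y_i^{p+2}}\qquad(\dagger)
\end{equation*}
all check out. The problem is that you do not then prove $(\dagger)$, and the first concrete claim you make about it is false: the increments $g(i+1)-g(i)$ are \emph{not} pointwise positive. Take $d=3$, $(x_1,x_2,x_3)=(0,1,100)$: then $g(2)=1-\tfrac{1}{99}\approx 0.99$ while $g(3)=\tfrac{1}{100}+\tfrac{1}{99}\approx 0.02$, so $g(3)-g(2)<0$; equivalently, $\sum_{k\neq i,i+1}\tfrac{1}{(x_i-x_k)(x_{i+1}-x_k)}$ can far exceed $2/y_i^2$ when $y_i$ is the largest gap, so no application of $\tfrac{1}{ab}\le\tfrac12(\tfrac1{a^2}+\tfrac1{b^2})$ can rescue the pointwise bound. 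Inequality $(\dagger)$ is true only in aggregate, with the weights $y_i^{-(p+1)}$ doing essential work, and the remainder of your plan (Abel summation, the mean-zero identity $\sum_j g(j)=0$, ``selective'' AM--GM on long-range factors) is a heuristic, not an argument --- as you yourself concede, the coefficient $3/d$ is exactly the part left unproved, and it is the entire content of the lemma.

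For comparison, the paper attacks the double sum directly (split into $k>i+1$ and $k<i$), applies Young's inequality in the form $a^pbc\le\tfrac{p}{p+2}a^{p+2}+\tfrac{1}{p+2}b^{p+2}+\tfrac{1}{p+2}c^{p+2}$ with $b,c$ the \emph{averaged} gaps $\tfrac{x_k-x_{i+1}}{k-i-1}$, $\tfrac{x_k-x_i}{k-i}$, then uses convexity of $a\mapsto a^{-(p+2)}$ to redistribute each $(x_k-x_i)^{-(p+2)}$ over the consecutive gaps it spans, and finally evaluates the resulting combinatorial coefficients by telescoping ($\sum 1/(k(k+1))$, $\sum(1/k^2+1/(k(k+1)^2))$) together with $\tfrac1i+\tfrac1{d-i}\ge\tfrac4d$. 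Some version of this weighted bookkeeping (or another genuinely quantitative mechanism) is unavoidable; your reduction to $(\dagger)$ could well be a useful reformulation, but as it stands the proof has a genuine gap.
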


\begin{proof}
	For each $(x_1, \ldots, x_d) \in \Delta_d$, we denote 
	\begin{align*}
		S_1 &= \sum_{i=1}^{d-2} \sum_{k =i+2}^d  \frac{1}{(x_{i+1} - x_i)^p(x_{i+1} - x_k)(x_i - x_k)},\\
		S_2 &= \sum_{i=2}^{d-1} \sum_{k =1}^{i-1}  \frac{1}{(x_{i+1} - x_i)^p(x_{i+1} - x_k)(x_i - x_k)}.
	\end{align*}
	Using Young's inequality
	$$\frac{p}{p+2} a^{p+2} + \frac{1}{p+2} b^{p+2} + \frac{1}{p+2} c^{p+2} \geq a^p b c, \quad a, b,c >0,$$
	we get 
	\begin{align*}
		S_1 &= \sum_{i=1}^{d-2} \sum_{k =i+2}^d  \frac{1}{(k-i-1)(k-i)} \frac{1}{(x_{i+1} - x_i)^p \frac{x_{k} - x_{i+1}}{k-i-1}\frac{x_k - x_i}{k-i}}\\
		&\leq \sum_{i=1}^{d-2} \sum_{k =i+2}^d  \frac{1}{(k-i-1)(k-i)}  \left\{ \frac{p}{p+2} \frac{1}{(x_{i+1} - x_i)^{p+2}} \right. \\
		&\quad + \left. \frac{1}{p+2} \frac{1}{\Big( \frac{x_k - x_{i+1}}{k-i-1}\Big)^{p+2}} 
		+ \frac{1}{p+2} \frac{1}{\Big( \frac{x_k - x_{i}}{k-i}\Big)^{p+2}} \right\}.
	\end{align*}
	Next, using the convexity of the function $a \mapsto a^{-(p+2)}$,  we have the following estimate
	\begin{align} \label{jentsen} \frac{1}{\Big( \frac{a_1+\cdots + a_k}{k} \Big)^{p+2}} \leq \frac{1}{k} \left(\frac{1}{a_1^{p+2}} + \cdots + \frac{1}{a_k^{p+2}}\right), \quad k\geq 1, \  a_1,\ldots, a_k>0. 
	\end{align}
	Since $x_k - x_{i+1} = \sum_{j=i+1}^{k-1}(x_{j+1} - x_j)$ and  $x_k - x_{i} = \sum_{j=i}^{k-1}(x_{j+1} - x_j)$, by applying the inequality \eqref{jentsen}, we get 
	\begin{align*}
		S_1&\leq \sum_{i=1}^{d-2} \sum_{k =i+2}^d  \frac{1}{(k-i-1)(k-i)}  \Bigg\{ \frac{p}{p+2} \frac{1}{(x_{i+1} - x_i)^{p+2}}+ \\
		&\qquad + \frac{1}{p+2} \frac{1}{k-i-1} \sum_{j=i+1}^{k-1} \frac{1}{(x_{j+1}-x_j)^{p+2}}  
		+ \frac{1}{p+2} \frac{1}{k-i} \sum_{j=i}^{k-1} \frac{1}{(x_{j+1}-x_j)^{p+2}}\Bigg\} \\
		&= S_{11} + S_{12},
	\end{align*}
	where 
	\begin{align*}
		S_{11} &= \sum_{i=1}^{d-2}  \sum_{k =i+2}^d \left\{    \frac{p}{p+2}\frac{1}{(k-i-1)(k-i)} + \frac{1}{p+2} \frac{1}{(k-i-1)(k-i)^2}\right\}  \frac{1}{(x_{i+1} - x_i)^{p+2}},\\
		S_{12}& =   \frac{1}{p+2} \sum_{i=1}^{d-2} \sum_{k=i+2}^d \frac{1}{(k-i)(k-i-1)} \Big( \frac{1}{k-i-1} + \frac{1}{k-i}\Big)  \sum_{j=i+1}^{k-1} \frac{1}{(x_{j+1}-x_j)^{p+2}}.
	\end{align*}
	We have 
	\begin{align*}
		S_{11} &= \sum_{i=1}^{d-2}   \left\{    \frac{p}{p+2}\sum_{k =i+2}^d \Big(\frac{1}{k-i-1} - \frac{1}{k-i}\Big) + \frac{1}{p+2} \sum_{k =i+2}^d \frac{1}{(k-i-1)(k-i)^2} \right\}  \frac{1}{(x_{i+1} - x_i)^{p+2}}\\
		& 	= \sum_{i=1}^{d-2}   \left\{    \frac{p}{p+2} \Big(1 - \frac{1}{d-i}\Big) + \frac{1}{p+2} \sum_{k =1}^{d-i-1} \frac{1}{k(k+1)^2}\right\}  \frac{1}{(x_{i+1} - x_i)^{p+2}}
	\end{align*}
	Since $\{(i,j,k) \in \mathbb{N}^3: \ 1 \leq i \leq d-2, \ i+2 \leq k \leq d, \ i+1 \leq j \leq k-1\} = \{(i,j,k) \in \mathbb{N}^3: \ 2 \leq j \leq d-1, \ 1 \leq i \leq j-1, \ j+1 \leq k \leq d\},$ we can rewrite $S_{12}$ as 
	\begin{align*}
		S_{12}
		&= \frac{1}{p+2} \sum_{j=2}^{d-1} \sum_{i=1}^{j-1} \sum_{k=j+1}^d  \frac{1}{(k-i)(k-i-1)} \Big( \frac{1}{k-i-1} + \frac{1}{k-i}\Big)   \frac{1}{(x_{j+1}-x_j)^{p+2}}\\
		&= 	\frac{1}{p+2} \sum_{j=2}^{d-1} \sum_{i=1}^{j-1}  \sum_{k=j+1}^d  \Big( \frac{1}{(k-i-1)^2} - \frac{1}{(k-i)^2}\Big)  \frac{1}{(x_{j+1}-x_j)^{p+2}}\\
		&= 	\frac{1}{p+2} \sum_{j=2}^{d-1} \sum_{i=1}^{j-1}   \Big( \frac{1}{(j-i)^2} - \frac{1}{(d-i)^2}\Big)  \frac{1}{(x_{j+1}-x_j )^{p+2}}\\
		&= 	\frac{1}{p+2} \sum_{j=2}^{d-1}    \Big( \sum_{k=1}^{j-1} \frac{1}{k^2} - \sum_{k=d-j+1}^{d-1} \frac{1}{k^2}\Big)  \frac{1}{(x_{j+1}-x_j )^{p+2}}\\
		&= 	\frac{1}{p+2} \sum_{i=2}^{d-1}    \Big( \sum_{k=1}^{i-1} \frac{1}{k^2} - \sum_{k=d-i+1}^{d-1} \frac{1}{k^2}\Big)  \frac{1}{(x_{i+1}-x_i )^{p+2}},
	\end{align*}	 
	where we replace the index $j$ by $i$ at the last equality. 
	Therefore, 
	\begin{align*}
		S_1&\leq \sum_{i=1}^{d-1}\left\{ \frac{p}{p+2} \left(1-\frac{1}{d-i} \right) +  \right.\\
		&\quad \left.+ \frac{1}{p+2}\left(\sum_{k=1}^{d-i-1}\frac{1}{k(k+1)^2} + \sum_{k=1}^{i-1}  \frac{1}{k^2}- \sum_{k=d-i+1}^{d-1}  \frac{1}{k^2}\right)\right\}\frac{1}{(x_{i+1}-x_i)^{p+2}},
	\end{align*}
	where we shall use from now on the convention that $\sum_{i=m}^n a_i = 0$ if $m > n$. 
	By following a similar argument, we can bound $S_2$ as
	\begin{align*}
		S_2&\leq \sum_{i=1}^{d-1}\left\{ \frac{p}{p+2} \left(1-\frac{1}{i} \right) + \right.\\
		&\quad \left.+\frac{1}{p+2}\left(\sum_{k=1}^{i-1}\frac{1}{k(k+1)^2} + \sum_{k=1}^{d-i-1}  \frac{1}{k^2}- \sum_{k=i+1}^{d-1}  \frac{1}{k^2}\right)\right\}\frac{1}{(x_{i+1}-x_i)^{p+2}}.
	\end{align*}
	Therefore 
	\begin{align} \label{aux_1}
		S_1+ S_2 \leq \sum_{i=1}^{d-1} \frac{ \varphi_i}{(x_{i+1}-x_i)^{p+2}},
	\end{align}
	where $\varphi_i$ is defined by 
	\begin{align*}
		\varphi_i =& \frac{p}{p+2}\left(2 - \frac{1}{i} - \frac{1}{d-i}\right)\\
		&+ \frac{1}{p+2} \left\{ \sum_{k=1}^{i-1} \left( \frac{1}{k^2} + \frac{1}{k(k+1)^2}\right) +  \sum_{k=1}^{d-i-1} \left( \frac{1}{k^2} + \frac{1}{k(k+1)^2}\right) -  \sum_{k=i+1}^{d-1}  \frac{1}{k^2} -  \sum_{k=d-i+1}^{d-1}  \frac{1}{k^2} \right\}.
	\end{align*}
	From the fact that 
	$$\sum_{k=1}^{n}\Big(\frac{1}{k^2}+\frac{1}{k(k+1)^2}\Big)=\sum_{k=1}^{n} \Big( \frac{1}{k^2}+\frac{1}{k(k+1)} - \frac{1}{(k+1)^2}\Big) = 2-\frac{1}{n+1}-\frac{1}{(n+1)^2},$$
	we have 
	\begin{align*}
		\varphi_i & = \frac{p}{p+2}\left(2 - \frac{1}{i} - \frac{1}{d-i}\right)
		+ \frac{1}{p+2} \left\{ 4- \frac{1}{i} - \frac{1}{d-i} - \sum_{k=i}^{d-1} \frac{1}{k^2} - \sum_{k=d-i}^{d-1} \frac{1}{k^2}  \right\}\\
		& \leq \frac{p}{p+2} \left(2 - \frac{1}{i} - \frac{1}{d-i} \right) + \frac{1}{p+2} \left\{ 4- \frac{1}{i} - \frac{1}{d-i} - \sum_{k=i}^{d-1} \frac{1}{k(k+1)} - \sum_{k=d-i}^{d-1} \frac{1}{k(k+1)}  \right\} \\
		&=  \frac{p}{p+2} \left(2 - \frac{1}{i} - \frac{1}{d-i} \right) + \frac{1}{p+2} \left\{ 4- \frac{2}{i} - \frac{2}{d-i} + \frac2d  \right\}.
	\end{align*}
	By using the estimate  $\frac{1}{i} + \frac{1}{d-i} \geq \frac 4d$ we get 
	\begin{equation*}
		\varphi_i \leq 2 - \frac 3d, \quad i=1, \ldots, d-1.
	\end{equation*}
	This estimate together with \eqref{aux_1} implies the desired result.
\end{proof}

Recall that $X_{i,j}(t)=X_i(t)-X_j(t)$. Then, for $i>j$, we have
\begin{align*}
	X_{i,j}(t)
	=&X_{i,j}(0)
	+\int_{0}^{t} \frac{2\gamma}{X_{i,j}(s)} \mathrm{d}s
	-\int_{0}^{t} \sum_{k \neq i,j} \frac{\gamma X_{i,j}(s)}{X_{i,k}(s) X_{j,k}(s)} \mathrm{d}s\\
	&+\int_{0}^{t} \left\{ b_i(X_i(s))-b_j(X_j(s)) \right\} \mathrm{d}s
	+\sum_{k=1}^{d}\int_{0}^{t} \left\{ \sigma_{i,k}(X(s))-\sigma_{j,k}(X(s))\right\} \mathrm{d} W_k(s).
\end{align*}

For each $N>0$, we define the stopping time 
\begin{equation} \label{def:tauN}
	\tau_N:=\inf\{s >0 : \inf_{1\leq i \leq d-1} X_{i+1,i}(s) \leq 1/N \text{ or } \sup_{i=1,\ldots,d}|X_i(s)| \geq N\}.
\end{equation}
It is clear that $\tau_N \uparrow \tau$ as $N \to \infty$.

Before stating the next lemma, we recall that $\sigma_d^2:=\displaystyle \sup_{i=1,\ldots,d} \sup_{x \in \real^d} \sum_{k=1}^d \sigma_{i,k}(x)^2$.
\begin{Lem}\label{IM_0}
	Suppose that Assumption \ref{Ass_2} holds.
	Assume that $\frac{3\gamma}{d\sigma_d^2} \geq 1$, $p\in [0,\frac{3\gamma}{d\sigma_d^2}-1]$, $T>0$ and $\e[X_{i+1,i}(0)^{-p}]<\infty$ for each $i=1,\ldots,d-1$.
	Then it holds that 
	\begin{align*}
		\sum_{i=1}^{d-1} \sup_{0\leq t \leq T}\e\left[ X_{i+1,i}(t \wedge \tau)^{-p} \right]
		\leq \Big(\sum_{i=1}^{d-1} \e[X_{i+1,i}(0)^{-p}]\Big) e^{pT\|b\|_{Lip}}.
	\end{align*}
	In particular, 
	$$\sup_{i \not = j}  \sup_{0\leq t \leq T} \e[X_{i,j}(t\wedge \tau)^{-p}] 	\leq \Big(\sum_{i=1}^{d-1} \e[X_{i+1,i}(0)^{-p}]\Big) e^{pT\|b\|_{Lip}}.$$
\end{Lem}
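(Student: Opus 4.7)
The plan is to apply Itô's formula to $Z_i(t)^{-p}$ where $Z_i(t) := X_{i+1,i}(t) = X_{i+1}(t)-X_i(t)$, sum over $i=1,\ldots,d-1$, and use Lemma~\ref{Lem:aux_1} to cancel the most singular terms. Since the coefficients are only locally defined, I would first localize using the sequence $\tau_N$ defined in \eqref{def:tauN}: on $[0,t\wedge\tau_N]$, all quantities $Z_i, X_{i,k}$ are bounded away from $0$ and $\infty$, so Itô's formula is valid and the local martingale pieces are true martingales. Using the expression for $dX_{i,j}$ already displayed in the excerpt with $j=i$ replaced by $i+1,i$, the drift of $Z_i$ consists of $\tfrac{2\gamma}{Z_i}$, the cross terms $-\sum_{k\neq i,i+1}\tfrac{\gamma Z_i}{X_{i+1,k}X_{i,k}}$ (note $X_{i+1,k}X_{i,k}>0$ whenever $k\notin\{i,i+1\}$), and $b_{i+1}(X_{i+1})-b_i(X_i)$; the quadratic variation is $\Sigma_i^2\,dt$ where $\Sigma_i^2:=\sum_{k=1}^{d}(\sigma_{i+1,k}-\sigma_{i,k})^2$, bounded via $(a-b)^2\le 2a^2+2b^2$ and (A4).

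Applying $d(Z_i^{-p})=-pZ_i^{-p-1}dZ_i+\tfrac{p(p+1)}{2}Z_i^{-p-2}\Sigma_i^2\,dt$ and summing over $i$, the crucial term is
\begin{equation*}
p\gamma \sum_{i=1}^{d-1}\sum_{k\neq i,i+1}\frac{Z_i^{-p}}{X_{i+1,k}X_{i,k}}
\leq p\gamma\Bigl(2-\tfrac{3}{d}\Bigr)\sum_{i=1}^{d-1}Z_i^{-p-2},
\end{equation*}
which is exactly what Lemma~\ref{Lem:aux_1} provides (applied with the current $p$). Combined with the $-2p\gamma\sum_i Z_i^{-p-2}$ contribution from the $\tfrac{2\gamma}{Z_i}$ drift and the diffusion term $\tfrac{p(p+1)}{2}\Sigma_i^2\, Z_i^{-p-2}$, the coefficient of $\sum_i Z_i^{-p-2}$ is at most $-\tfrac{3p\gamma}{d}+p(p+1)\sigma_d^2$, which is $\le 0$ precisely under the hypothesis $p\le \tfrac{3\gamma}{d\sigma_d^2}-1$. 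This is the point where the constant $3/d$ from Lemma~\ref{Lem:aux_1} is indispensable, and it is also the main technical obstacle of the argument.

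For the drift term involving $b$, I would split
\begin{equation*}
b_{i+1}(X_{i+1})-b_i(X_i)=[b_{i+1}(X_{i+1})-b_{i+1}(X_i)]+[b_{i+1}(X_i)-b_i(X_i)]\geq -\|b\|_{Lip}Z_i,
\end{equation*}
using (A3) on the first bracket and Assumption~\ref{Ass_2} on the second, so that $-pZ_i^{-p-1}(b_{i+1}-b_i)\le p\|b\|_{Lip}Z_i^{-p}$. Taking expectations (the local martingale is bounded on $[0,t\wedge\tau_N]$ and so its expectation vanishes), the singular $\sum_i Z_i^{-p-2}$ terms are absorbed and I obtain
\begin{equation*}
\e\Bigl[\sum_{i=1}^{d-1}Z_i(t\wedge \tau_N)^{-p}\Bigr]
\leq \sum_{i=1}^{d-1}\e[Z_i(0)^{-p}]+p\|b\|_{Lip}\int_0^t \e\Bigl[\sum_{i=1}^{d-1}Z_i(s\wedge \tau_N)^{-p}\Bigr]ds.
\end{equation*}
Gronwall's inequality then yields the claimed exponential bound uniformly in $N$; passing $N\to\infty$ and applying Fatou's lemma gives the same bound at $t\wedge\tau$. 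The final statement follows by noting that for $i<j$, $|X_{i,j}(t)|=\sum_{k=i}^{j-1}Z_k(t)\geq Z_k(t)$ for every $k\in\{i,\dots,j-1\}$, hence $\e[|X_{i,j}(t\wedge\tau)|^{-p}]\leq \e[Z_k(t\wedge\tau)^{-p}]\leq \sum_{k=1}^{d-1}\e[Z_k(t\wedge\tau)^{-p}]$.
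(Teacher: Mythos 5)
Your proposal is correct and follows essentially the same route as the paper's proof: localization by $\tau_N$, It\^o's formula for $X_{i+1,i}(t\wedge\tau_N)^{-p}$, Lemma \ref{Lem:aux_1} to dominate the cross terms by $(2-\frac{3}{d})\sum_i X_{i+1,i}^{-p-2}$, the monotonicity-plus-Lipschitz split of $b_{i+1}(X_{i+1})-b_i(X_i)$, vanishing of the (bounded) stochastic integrals, Gronwall's inequality, and the passage $N\to\infty$. The one point worth flagging --- inherited from the paper's own proof --- is that your coefficient $p(p+1)\sigma_d^2$ tacitly presumes $\sum_k(\sigma_{i+1,k}-\sigma_{i,k})^2\le 2\sigma_d^2$, whereas the inequality $(a-b)^2\le 2a^2+2b^2$ you invoke together with (A4) only yields $4\sigma_d^2$; the sharper constant is exact in the diagonal case $\sigma_{i,j}=\delta_{i,j}\sigma_i$, but in general it would cost a factor of $2$ in the admissible range of $p$.
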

\begin{proof}
	By using It\^o's formula, we have
	\begin{align*}
		&	X_{i+1,i}(t \wedge \tau_N)^{-p}\\
		=&X_{i+1,i}(0)^{-p}
		+\int_{0}^{t \wedge \tau_N}\left\{
		\frac{-2p\gamma}{X_{i+1,i}(s)^{p+2}}
		+ \frac{p\gamma}{X_{i+1,i}(s)^{p}} \sum_{k \neq i,i+1} \frac{1}{X_{i,k}(s) X_{i+1,k}(s)}
		\right\} \mathrm{d}s\\
		&-\int_{0}^{t \wedge \tau_N}
		\frac{p\{b_{i+1}(X_{i+1}(s))-b_i(X_i(s))\}}{X_{i+1,i}(s)^{p+1}}
		\mathrm{d}s\\
		&+\sum_{k=1}^{d}\int_{0}^{t \wedge \tau_N}
		\frac{p(p+1) |\sigma_{i+1,k}(X(s))-\sigma_{i,k}(X(s))|^2}{2X_{i+1,i}(s)^{p+2}} \mathrm{d}s\\
		&-\sum_{k=1}^{d}\int_{0}^{t \wedge \tau_N} \frac{p\left\{\sigma_{i+1,k}(X(s))-\sigma_{i,k}(X(s))\right\} }{X_{i+1,i}(s)^{p+1}}\mathrm{d}W_{k}(s).
	\end{align*}
	Since for each $i=1,\ldots,d$,
	\begin{align*}
		\int_{0}^{t}
		\left| \frac{ \left\{\sigma_{i+1,k}(X(s))-\sigma_{i,k}(X(s))\right\}}{X_{i+1,i}(s)^{p+1}} \1_{\{s \leq \tau_N\}} \right|^2\mathrm{d}s
		\leq 2\sigma_d^2 N^{2(p+1)}t,
	\end{align*}
	thus the expectations of the above stochastic integrals are zero.
	Moreover, since $(p+1)d \sigma_d^2 < 3\gamma $, by applying Lemma \ref{Lem:aux_1} we obtain 
	\begin{align*}
		\sum_{i=1}^{d-1} \e[X_{i+1,i}(t \wedge \tau_N)^{-p}]
		\leq& \sum_{i=1}^{d-1} \e[X_{i+1,i}(0)^{-p}] 
		-  \sum_{i=1}^{d-1}\e\left[\int_{0}^{t\wedge \tau_N}
		\frac{p\{b_{i+1}(X_{i+1}(s))-b_{i}(X_i(s))\}}{X_{i+1,i}(s)^{p+1}}
		\mathrm{d}s\right].
	\end{align*}
	Since $b_{i+1} \geq b_i$ and $b_i$ is Lipschitz continuous, we have
	\begin{align*}
		\sum_{i=1}^{d-1} \e[X_{i+1,i}(t \wedge \tau_N)^{-p}]
		\leq&  \sum_{i=1}^{d-1} \e[X_{i+1,i}(0)^{-p}]
		- \sum_{i=1}^{d-1}\e\left[\int_{0}^{t\wedge \tau_N}
		\frac{p\{b_{i}(X_{i+1}(s))-b_i(X_i(s))\}}{X_{i+1,i}(s)^{p+1}}\mathrm{d}s\right] \notag\\
		\leq& \sum_{i=1}^{d-1}\e[X_{i+1,i}(0)^{-p}]
		+p \|b\|_{Lip}\int_{0}^{t}
		\sum_{i=1}^{d-1}\e[X_{i,i+1}(s\wedge \tau_N)^{-p}]
		\mathrm{d}s.
	\end{align*}
	Using Gronwall's inequality, we get 
	$$\sum_{i=1}^{d-1} \e[X_{i+1,i}(t \wedge \tau_N)^{-p}] \leq \Big( \sum_{i=1}^{d-1}\e[X_{i+1,i}(0)^{-p}]
	\Big) e^{pt \|b\|_{Lip}}.$$
	Let $N\to \infty$ we conclude the proof of the Lemma.
\end{proof}

\begin{Lem}\label{momen_Hol_0}
	Suppose that Assumption \ref{Ass_2} holds.
	Assume that	$\frac{3\gamma}{d\sigma_d^2} \geq 2$, $p\in [1,\frac{3\gamma}{d\sigma_d^2}-1]$, $T>0$, $\e[|X(0)|^{p}]<\infty$ and $\e[X_{i+1,i}(0)^{-p}]<\infty$ for each $i=1,\ldots,d-1$.
	Then there exists a finite constant $C$ such that
	\begin{align}\label{momen_Hol_1}
		\sup_{0\leq t \leq T}\sup_{1\leq i \leq d} \e[|X_{i}(t\wedge \tau)|^{p}] \leq C.
	\end{align}
\end{Lem}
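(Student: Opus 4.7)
The plan is to bound $\sup_i \e[|X_i(t\wedge\tau)|^p]$ by the single quantity $\e[(1+Z(t\wedge\tau))^{p/2}]$, where
\[
 Z(t):=\sum_{i=1}^{d} X_i(t)^2,
\]
since $|X_i|^p\leq |X|^p=Z^{p/2}\leq (1+Z)^{p/2}$. So the task reduces to estimating the $p/2$-th moment of $1+Z$.

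First I would apply It\^o's formula to $Z$, exploiting the key algebraic cancellation that makes the singular drift disappear. Using identity \eqref{id1} with $A_i=X_i$ and $B_{i,j}=(X_i-X_j)^{-1}$, one has
\[
 \sum_{i=1}^{d} X_i \sum_{j\neq i}\frac{\gamma}{X_i-X_j}
 =\gamma\sum_{i<j}\frac{X_i-X_j}{X_i-X_j}=\frac{\gamma d(d-1)}{2},
\]
so the pairwise repulsion contributes a mere constant. Combining this with the Lipschitz bound on $b$ (which gives $|X_ib_i(X_i)|\leq C(1+X_i^2)$) and the bound $\sum_{i,k}\sigma_{i,k}^2\leq d\sigma_d^2$, I obtain a differential inequality of the form
\[
 dZ(t)\leq C_1\bigl(1+Z(t)\bigr)dt+dM(t),
\]
for a constant $C_1$ depending only on $d,\gamma,\|b\|_{Lip},\sigma_d,$ and $\max_i|b_i(0)|$, where $M$ is a local martingale.

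Next I would apply It\^o's formula to $\phi(z):=(1+z)^{p/2}$ (chosen smooth on $[0,\infty)$ so that no issue at $Z=0$ arises for arbitrary $p\geq 1$). To control the It\^o correction I need the quadratic variation of $Z$: since $d\langle Z\rangle/dt=4\sum_k\bigl(\sum_iX_i\sigma_{i,k}(X)\bigr)^2$, Cauchy--Schwarz and the bound on $\sigma$ give
\[
 \frac{d\langle Z\rangle}{dt}\leq 4d\sum_{i,k}X_i^2\sigma_{i,k}(X)^2\leq 4d\sigma_d^2 Z\leq 4d\sigma_d^2(1+Z).
\]
Combining, and bounding $(1+Z)^{p/2-1}\leq (1+Z)^{p/2}$ (since $1+Z\geq 1$ and $p/2-1\leq p/2$), we obtain
\[
 d\phi(Z)\leq C_2\,\phi(Z)\,dt+\phi'(Z)\,dM,
\]
with $C_2$ depending on $d,p,\gamma,\|b\|_{Lip},\sigma_d$.

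Finally, stop at $\tau_N$ defined in \eqref{def:tauN} so that the martingale piece is a genuine martingale, take expectations, and apply Gronwall's inequality to obtain
\[
 \e\bigl[\phi(Z(t\wedge\tau_N))\bigr]\leq \e[\phi(Z(0))]\,e^{C_2 T}, \qquad t\in[0,T],
\]
uniformly in $N$. Since $\e[\phi(Z(0))]=\e[(1+|X(0)|^2)^{p/2}]\leq 2^{p/2}(1+\e[|X(0)|^p])<\infty$ by hypothesis, letting $N\to\infty$ and using Fatou together with $\tau_N\uparrow\tau$ and path continuity yields $\e[\phi(Z(t\wedge\tau))]\leq C$, hence the claim \eqref{momen_Hol_1}.

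The main technical point is the cancellation identity that turns the singular $\sum_i X_i\sum_{j\neq i}\gamma/(X_i-X_j)$ into a harmless constant; without this the drift in $dZ$ would itself involve the dangerous $(X_i-X_j)^{-1}$ terms and a moment estimate via Gronwall would fail. Everything else is a standard It\^o--Gronwall argument with a stopping-time approximation. The condition $3\gamma/(d\sigma_d^2)\geq 2$ is not actually needed for this upper bound in isolation; it is imposed for compatibility with Lemma \ref{IM_0} so that the resulting pair of estimates delivers Hypothesis \ref{Ip} for the same exponent.
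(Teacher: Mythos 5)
Your proof is correct, but it takes a genuinely different route from the paper. The paper bounds each coordinate directly from the integral equation: it writes $|X_i(t\wedge\tau_N)|$ as the sum of the initial value, the drift integrals and the stochastic integral, raises to the power $p$, and then pays for the singular term $\int_0^{t}\gamma^p|X_{i,j}(s\wedge\tau_N)|^{-p}\,\rd s$ by invoking the inverse-moment bound of Lemma \ref{IM_0}; this is precisely where the hypotheses $\frac{3\gamma}{d\sigma_d^2}\geq 2$ and $p\leq\frac{3\gamma}{d\sigma_d^2}-1$ enter, after which Burkholder--Davis--Gundy and Gronwall finish the argument. You instead use the Lyapunov function $Z=\sum_i X_i^2$ together with the electrostatic cancellation $\sum_i X_i\sum_{j\neq i}\gamma/(X_i-X_j)=\gamma d(d-1)/2$ (an instance of identity \eqref{id1}), which removes the singular drift entirely, and then run a standard It\^o--Gronwall argument on $(1+Z)^{p/2}$ with the stopping times $\tau_N$ and Fatou. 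I checked the details: the quadratic-variation bound $\rd\langle Z\rangle/\rd t\leq 4d\sigma_d^2(1+Z)$, the treatment of both $p<2$ and $p\geq 2$ via $(1+Z)^{p/2-1}\leq(1+Z)^{p/2}$, and the integrability of $(1+|X(0)|^2)^{p/2}$ are all sound. Your closing remark is also accurate: your argument does not use the ratio condition on $\gamma/\sigma_d^2$ at all, so it proves the moment bound under weaker hypotheses; that condition is genuinely needed elsewhere (for Lemma \ref{IM_0}, and hence for the increment estimate \eqref{momen_Hol_2} and Hypothesis \ref{Ip}, where no such cancellation is available for a single coordinate). The trade-off is that your argument exploits the special additive structure of the full pairwise interaction, whereas the paper's coordinatewise argument reuses machinery it must develop anyway.
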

\begin{proof}
	Since $|b_i(x)|\leq |b_i(0)|+\|b\|_{Lip}|x|$ for any $x \in \real$, we have
	\begin{align*}
		|X_i(t \wedge \tau_N)| \leq& |X_i(0)|  + |b_i(0)|t + \|b\|_{Lip}\int_0^{t\wedge \tau_N} |X_i(s)|\rd s \\
		&+ \sum_{j\not = i}\int_0^{t\wedge \tau_N} \frac{\gamma}{|X_{i,j}(s)|}\rd s+  \sum_{j=1}^d \Big| \int_0^{t\wedge \tau_N} \sigma_{i,j}(X(s))\rd W_j(s)\Big|.
	\end{align*}
	A simple calculation yields
	\begin{align*}
		\frac{|X_i(t \wedge \tau_N)|^p}{(2d+2)^{p-1}} 
		\leq &
		|X_i (0)|^p +  |b_i(0)|^p t^p  + \|b\|_{Lip}^p t^{p-1} \int_0^{t} |X_i(s\wedge \tau_N)|^{p}\rd s \\
		& + t^{p-1} \sum_{j\not = i}  \int_0^{t} \frac{ \gamma^p}{|X_{i,j}(s\wedge \tau_N)|^p}\rd s
		+ \sum_{j=1}^{d} \Big| \int_0^{t\wedge \tau_N} \sigma_{i,j}(X(s))\rd W_j(s)\Big|^{p}.
	\end{align*}
	Denote $C_0 = \Big(\sum_{i=1}^{d-1} \e[X_{i+1,i}(0)^{-p}]\Big) e^{pT\|b\|_{Lip}}$.
	From Lemma \ref{IM_0}, Burkholder-Davis-Gundy's inequality and the boundedness of $\sigma_{i,j}$, by taking expectation,
	\begin{align*}
		\frac{\e[|X_i(t \wedge \tau_N)|^p]}{(2d+2)^{p-1}} 
		&\leq \e[|X_i(0)|^p] + |b_i(0)|^p t^p  \\
		&\quad + \|b\|_{Lip}^p t^{p-1} \int_0^{t} \e [|X_i(s\wedge \tau_N)|^{p}]\rd s  + (d-1)t^{p} \gamma^p C_0  + c(p)dt^{p/2}\sigma_d^p.
	\end{align*}
	It then follows from  Gronwall's inequality that  $\e[|X_i(t \wedge \tau_N)|^p]$ is bounded by 
	$$ (2d+2)^{p-1}\Big(  \e[|X_i(0)|^p] + |b_i(0)|^p t^p + (d-1)t^{p} \gamma^p C_0  + c(p)dt^{p/2}\sigma_d^p\Big)e^{ (2d+2)^{p-1} \|b\|_{Lip}^p t^{p-1}}.$$
	Let $N\to \infty$, we obtain 
	\begin{align*}
		\e[|X_i(t \wedge \tau)|^p] 
		\leq & (2d+2)^{p-1}\Big(  \e[|X_i(0)|^p] + |b_i(0)|^p t^p \\
		&+ (d-1)t^{p} \gamma^p C_0  + c(p)dt^{p/2}\sigma_d^p\Big)e^{ (2d+2)^{p-1} \|b\|_{Lip}^p t^{p-1}}.
	\end{align*}
	This implies  the  assertion of Lemma \ref{momen_Hol_0}.
\end{proof}
The main result of this section reads as follows.
\begin{Thm} \label{momen_Hol_0x}
	Suppose that Assumption \ref{Ass_2} holds.
	Assume that $\frac{3\gamma}{d\sigma_d^2} \geq 2$, $p\in [1,\frac{3\gamma}{d\sigma_d^2}-1]$, $\e[|X(0)|^{p}]<\infty$ and $\e[X_{i+1,i}(0)^{-p}]<\infty$ for each $i=1,\ldots,d-1$.
	Then the equation \eqref{def:X} has a unique strong solution $X(t)$ such that $X(t) \in \Delta_d$ almost surely for all $t>0$.
	Moreover, for any $T>0$, there exists a finite constant $C$ such that for any $0\leq s<t\leq T$
	\begin{align}\label{momen_Hol_2}
		\sup_{i=1,\ldots,d}\e[|X_i(t)-X_i(s)|^{p}] \leq C(t-s)^{p/2}.
	\end{align}
	
\end{Thm}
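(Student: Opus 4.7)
The plan is to combine Lemmas \ref{IM_0} and \ref{momen_Hol_0} to show that the stopping time $\tau$ in \eqref{def:tau} is almost surely infinite, then use these same moment/inverse-moment controls to deduce the H\"older estimate \eqref{momen_Hol_2}.

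\textbf{Step 1: Non-explosion and non-collision, i.e. $\tau = \infty$ a.s.} Since the coefficients of \eqref{def:X} are locally Lipschitz on $\Delta_d$, there is a unique strong local solution up to $\tau$, and $\tau_N \uparrow \tau$ a.s.\ where $\tau_N$ is defined in \eqref{def:tauN}. By Lemma \ref{IM_0} (applied with the chosen $p$) and Lemma \ref{momen_Hol_0}, there exists a finite constant $C = C(T,p,d,\gamma,\|b\|_{Lip}, \sigma_d, \ldots)$ such that, uniformly in $N$,
\begin{align*}
\sum_{i=1}^{d-1}\e\bigl[X_{i+1,i}(T\wedge\tau_N)^{-p}\bigr] + \sum_{i=1}^{d}\e\bigl[|X_i(T\wedge\tau_N)|^{p}\bigr] \leq C.
\end{align*}
On the event $\{\tau_N \leq T\}$, by definition of $\tau_N$, either $\min_i X_{i+1,i}(T\wedge\tau_N) \leq 1/N$ or $\max_i |X_i(T\wedge\tau_N)| \geq N$, so $\sum_i X_{i+1,i}(T\wedge\tau_N)^{-p} \geq N^p$ or $\sum_i |X_i(T\wedge\tau_N)|^p \geq N^p$. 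Markov's inequality therefore gives $\p(\tau_N \leq T) \leq 2C/N^p \to 0$, so $\p(\tau \leq T) = 0$ for every $T > 0$ and $\tau = \infty$ a.s.

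\textbf{Step 2: Global strong existence and uniqueness.} Existence of the strong solution on $[0,\infty)$ follows by pasting the local strong solutions on $[0, \tau_N]$. Uniqueness on $[0,\tau_N]$ is standard because the coefficients are Lipschitz on the closed set $\{\x \in \Delta_d : 1/N \leq \min_i x_{i+1,i}, \max_i |x_i| \leq N\}$; since $\tau_N \uparrow \infty$ a.s.\ by Step 1, uniqueness extends to $[0,\infty)$.

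\textbf{Step 3: The H\"older estimate \eqref{momen_Hol_2}.} For $0\leq s<t\leq T$, decompose
\begin{align*}
X_i(t)-X_i(s) = \int_s^t \sum_{j\neq i}\frac{\gamma}{X_{i,j}(u)}\,\mathrm{d}u + \int_s^t b_i(X_i(u))\,\mathrm{d}u + \sum_{j=1}^{d}\int_s^t \sigma_{i,j}(X(u))\,\mathrm{d}W_j(u).
\end{align*}
For the drift singularity, observe that for any $j\neq i$ one has $|X_{i,j}(u)| \geq X_{i+1,i}(u)$ or $|X_{i,j}(u)| \geq X_{i,i-1}(u)$ depending on whether $j>i$ or $j<i$, so
\begin{align*}
\sum_{j\neq i}\frac{1}{|X_{i,j}(u)|^p} \leq (d-i) X_{i+1,i}(u)^{-p} + (i-1) X_{i,i-1}(u)^{-p}.
\end{align*}
Applying H\"older's inequality in $u$ together with Lemma \ref{IM_0} (valid now on $[0,T]$ by Step 1) yields $\e\bigl[|\int_s^t\sum_{j\neq i}\gamma X_{i,j}(u)^{-1}\mathrm{d}u|^p\bigr] \leq C(t-s)^p$. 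The Lipschitz drift term is bounded by $C(t-s)^p$ using Lemma \ref{momen_Hol_0}, and Burkholder--Davis--Gundy together with the boundedness of $\sigma_{i,j}$ gives $C(t-s)^{p/2}$ for the stochastic integral. Since $t-s \leq T$, the $(t-s)^p$ contributions are absorbed into $(t-s)^{p/2}$, yielding \eqref{momen_Hol_2}.

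The only delicate point is the treatment of the singular drift in Step 3: one must be careful that $|X_{i,j}|^{-p}$, summed over $j$, remains controlled purely in terms of the nearest-neighbour inverse gaps $X_{i+1,i}^{-p}$, which is exactly what Lemma \ref{IM_0} provides. Everything else reduces to standard estimates once $\tau = \infty$ has been established.
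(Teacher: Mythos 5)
Your proposal is correct and follows essentially the same route as the paper: non-collision and non-explosion via Lemmas \ref{IM_0} and \ref{momen_Hol_0} (you merely spell out the Markov-inequality step that the paper leaves implicit), and then the H\"older bound \eqref{momen_Hol_2} from the same decomposition of $X_i(t)-X_i(s)$, H\"older's inequality in time, the inverse-moment bound, the $p$-th moment bound, and Burkholder--Davis--Gundy. The nearest-neighbour reduction in your Step 3 is fine but unnecessary, since the ``in particular'' part of Lemma \ref{IM_0} already controls $\e[X_{i,j}(t)^{-p}]$ uniformly over all pairs $i\neq j$.
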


\begin{proof}
	By applying  Lemma \ref{IM_0} and Lemma \ref{momen_Hol_0} with $p=1$ we deduce that $\tau= \infty$, which implies that the equation \eqref{def:X} has a unique global strong solution $X(t)$ whose value is in $\Delta_d$ for all $t>0$.
	
	Now we consider the second statement \eqref{momen_Hol_2}.
	For any $0\leq s < t \leq T$,
	\begin{align*}
		\frac{|X_i(t) - X_i(s)|^p}{(2d+1)^{p-1}} & \leq    
		| b_i(0)|^p (t-s)^{p} + \|b\|_{Lip}^p (t-s)^{p-1} \int_s^{t} |X_i(u)|^p \rd u \\
		&+ \sum_{j\not = i}(t-s)^{p-1} \int_s^{t} \frac{\gamma^p}{|X_{i,j}(u)|^p}\rd u+  \sum_{j=1}^d \Big| \int_s^{t} \sigma_{i,j}(X(u))\rd W_j(u)\Big|^p.
	\end{align*}
	It follows from   Lemma \ref{IM_0}, estimate \eqref{momen_Hol_1} and Burkholder-Davis-Gundy's inequality that 
	\begin{align*}
		\e[|X_i(t)-X_i(s)|^p]
		&\leq C(t-s)^{p}
		+\sum_{j=1}^d c(p) \e\Big[\Big( \int_s^{t} \sigma_{i,j}^2(X(u)) \rd u \Big)^{p/2}\Big] \\
		&\leq C (t-s)^{p/2}.
	\end{align*}
	This concludes the second assertion \eqref{momen_Hol_2}.
\end{proof}

\begin{Rem}
	Under the assumption of Theorem \ref{momen_Hol_0x} it is straightforward to verify that  the Hypothesis \ref{Ip} holds with $\hat{p}=p$.
\end{Rem}

\begin{Rem}
	The existence and uniqueness of  non-colliding solution in this paper are established under stricter conditions on $\gamma/\sigma^2$ than  in \cite{RogerShi, CepaLepingle} and \cite{GrMa14}. Note that these papers only considered  a particular case of  systems \eqref{def:X} where each coordinate $X_i$ is driven by a  independent Brownian motion and $\sigma_{i,j}(X(t)) = \delta_{i,j}\sigma_i(X_i(t))$, where $\delta_{i,j}$ is the Dirac delta function. Thanks to that stricter condition, the existence and uniqueness can be proven for a more general class of equations where the driving Brownian motions of each exponent can be correlated. More importantly, that condition allows us to obtain the moment estimation \eqref{momen_Hol_2} which is the key to study the strong rate of convergence for the discrete approximation for equation  \eqref{def:X}.
\end{Rem}


\subsection{Brownian particles with nearest neighbor repulsion}

In this section we consider the process  $X=(X_1,\ldots,X_d)$  given by the following SDEs
\begin{equation} \label{repX}
	\begin{cases} \rd X_1(t) = \left\{ \frac{\gamma}{X_1(t)-X_2(t)} + b_1(X_1(t))\right\} \rd t + \sum_{j=1}^d  \sigma_{1,j}(X(t))\rd W_j(t),\\
		\rd X_i(t) = \left\{ \frac{\gamma}{X_i(t)-X_{i-1}(t)} +\frac{\gamma}{X_i(t)-X_{i+1}(t)} + b_i(X_i(t))\right\}\rd t + \sum_{j=1}^d  \sigma_{i,j}(X(t))\rd W_j(t),\\
		\hspace{9cm} i=2,\ldots, d-1,\\
		\rd X_d(t) = \left\{ \frac{\gamma}{X_d(t)-X_{d-1}(t)} + b_d(X_{d}(t))\right\} \rd t + \sum_{j=1}^d  \sigma_{d,j}(X(t))\rd W_j(t),
	\end{cases}
\end{equation} 
with $X(0) \in \Delta_d$. Let the Assumptions (A1)--(A4) hold. Since the coefficients of equation \eqref{repX} are locally Lipschitz continuous in $\Delta_d$, given $X(0) \in \Delta_d$, equation \eqref{repX} has a unique strong local solution up to the stopping time $\tau$ defined by \eqref{def:tau}.

\begin{Rem}
	These kind of systems were studied in \cite{GrMa14, RoVa, Lep}. In particular, \cite{GrMa14} considered the following SDEs
	\begin{equation*} 
		\begin{cases} \rd X_1(t) =  \frac{\gamma}{X_1(t)-X_2(t)} \rd t +   \sigma_{1}(X_1(t))\rd W_1(t),\\
			\rd X_i(t) = \left\{ \frac{\gamma}{X_i(t)-X_{i-1}(t)} +\frac{\gamma}{X_i(t)-X_{i+1}(t)} \right\} \rd t +  \sigma_{i}(X_i(t))\rd W_i(t),  i=2,\ldots, d-1,\\
			\rd X_d(t) =  \frac{\gamma}{X_d(t)-X_{d-1}(t)} \rd t +  \sigma_{d}(X_d(t))\rd W_d(t).
		\end{cases}
	\end{equation*}
	It is shown that the system has a unique strong solution with no collisions and no explosions if $d=3$, $\gamma \geq \frac 34$ and $|\sigma_i|\leq 1$. 
\end{Rem}

In the following, we apply the method introduced in the previous sections, which is essentially different from the one in \cite{GrMa14}, to study the existence, uniquess, non-collision and non-explosions of the solution to the general equation \eqref{repX}.

\begin{Lem} \label{Lem:aux_2}
	For any $d\geq 3$ and $p \geq 0$, there exists a constant $\ochi(d,p) < 2 $ such that 
	\begin{align*}
		\sum_{i=1}^{d-2}&
		\left\{
		\frac{1}{(x_{i+2}-x_{i+1})(x_{i+1}-x_i)^{p+1}} + \frac{1}{(x_{i+2}-x_{i+1})^{p+1} (x_{i+1}-x_i)}
		\right\}\\
		&	\leq \ochi(d,p)\sum_{i=1}^{d-1} \frac{1}{(x_{i+1}-x_i)^{p+2}},
	\end{align*}
	for any $(x_1,\ldots, x_d) \in \Delta_d$.
\end{Lem}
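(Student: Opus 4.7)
The plan is to rewrite the inequality on a compact simplex, bound each summand via weighted AM--GM, and convert the resulting pointwise strict inequality into a uniform constant through compactness. Setting $y_i := x_{i+1}-x_i > 0$ and $w_i := y_i^{-(p+2)}$ for $i = 1,\ldots,d-1$, the claim becomes the existence of $\ochi(d,p) < 2$ such that
$$L(w) := \sum_{i=1}^{d-2}\bigl(w_i^{\alpha} w_{i+1}^{\beta} + w_i^{\beta} w_{i+1}^{\alpha}\bigr) \leq \ochi(d,p) \sum_{i=1}^{d-1} w_i$$
for all $w \in (0,\infty)^{d-1}$, where $\alpha := (p+1)/(p+2)$ and $\beta := 1/(p+2)$ satisfy $\alpha+\beta = 1$. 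Since both sides are homogeneous of degree one in $w$, it suffices to work on the compact simplex $\Sigma := \{w \in [0,\infty)^{d-1} : \sum_i w_i = 1\}$, on which $L$ extends continuously (each summand is continuous in $w$ and vanishes whenever one of its two arguments does).

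Weighted AM--GM gives $u^\alpha v^\beta + u^\beta v^\alpha \leq u+v$, with equality iff $u = v$. Summed over $i$ this yields
$$L(w) \leq \sum_{i=1}^{d-2}(w_i + w_{i+1}) = 2 - w_1 - w_{d-1} \leq 2$$
on $\Sigma$, which is not yet strict. To upgrade, note that each summand $T_i := w_i^\alpha w_{i+1}^\beta + w_i^\beta w_{i+1}^\alpha$ vanishes as soon as one of $w_i, w_{i+1}$ does. Let $S := \{i : w_i > 0\}$ and decompose $S$ into its maximal contiguous runs $S_j = \{a_j,\ldots,b_j\}$, $j = 1,\ldots,r$. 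Applying the AM--GM bound within each run and summing,
$$L(w) \leq \sum_{j=1}^r\Bigl(2\sum_{i \in S_j} w_i - w_{a_j} - w_{b_j}\Bigr) = 2 - \sum_{j=1}^r\bigl(w_{a_j}+w_{b_j}\bigr) < 2,$$
the strict inequality following because $\sum_i w_i = 1$ forces $S$ to be non-empty and every endpoint $w_{a_j}, w_{b_j}$ is strictly positive by the definition of $S$.

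Finally, $L$ is continuous on the compact set $\Sigma$, so it attains its maximum, and the pointwise strict inequality $L(w) < 2$ forces $\ochi(d,p) := \max_{w \in \Sigma} L(w) < 2$, which is the desired constant. The main subtlety lies in the second step: the naive term-wise AM--GM bound is sharp at $w_i = w_{i+1}$ and only delivers $L \leq 2$, so the strict gap must be extracted from the support structure of $w$, after which compactness converts pointwise strictness into a uniform constant below $2$.
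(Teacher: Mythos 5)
Your proposal is correct and follows essentially the same route as the paper: both normalize by homogeneity to a compact set (your simplex $\sum_i w_i=1$ is the paper's constraint $\sum_i \xi_i^{p+2}=1$ under the substitution $w_i=\xi_i^{p+2}$), define $\ochi(d,p)$ as the attained maximum of the left-hand side on that set, and deduce $\ochi(d,p)<2$ from a pointwise strict inequality. The only real difference is how strictness is obtained: the paper uses the exact identity $2-\sum_{i=1}^{d-2}(\xi_{i+1}\xi_i^{p+1}+\xi_{i+1}^{p+1}\xi_i)=\sum_{i=1}^{d-2}(\xi_i-\xi_{i+1})(\xi_i^{p+1}-\xi_{i+1}^{p+1})+\xi_1^{p+2}+\xi_{d-1}^{p+2}>0$, which packages your weighted AM--GM deficit together with the boundary terms in a single formula and thereby avoids the support-run decomposition you need to handle points where interior coordinates vanish.
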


\begin{proof}
	Denote 
	$L= \sum_{i=1}^{d-1} \frac{1}{(x_{i+1}-x_i)^{p+2}}$ 
	and 
	$\xi_i = \frac{1}{(x_{i+1}-x_i) L^{1/(p+2)}}$. 
	We have $\sum_{i=1}^{d-1} \xi_i^{p+2} = 1$.
	Denote $S^+_{p,d-1} := \{\xi=(\xi_1, \ldots, \xi_{d-1})\in \real_+^{d-1}\ : \   \sum_{i=1}^{d-1} \xi_i^{p+2} = 1 \}$,
	and 
	$\ochi(d,p) = \sup_{(\xi_1, \ldots, \xi_{d-1}) \in S^+_{p,d-1}} \sum_{i=1}^{d-2} (\xi_{i+1}\xi_i^p +\xi_{i+1}^p\xi_i).$
	Since  $S^+_{p,d-1}$
	is a compact subset of $\real^{d-1}$, the supremum is attainable. On the other hand, for any $\xi \in S^+_{p,d-1}$,
	\begin{align*} 
		2 - \sum_{i=1}^{d-2} (\xi_{i+1}\xi_i^{p+1} +\xi_{i+1}^{p+1}\xi_i) &= 2 \sum_{i=1}^{d-1} \xi_i^{p+2}  - \sum_{i=1}^{d-2} (\xi_{i+1}\xi_i^{p+1} +\xi_{i+1}^{p+1}\xi_i) \\
		&= \sum_{i=1}^{d-2} (\xi_i - \xi_{i+1})(\xi_i^{p+1} - \xi_{i+1}^{p+1}) + \xi_1^{p+2} + \xi_{d-1}^{p+2}.
	\end{align*}
	The last term is strictly positive since  for any  non-negative constants $a$ and $b$ the quantity $(a-b)(a^{p+1}-b^{p+1})$ is non-negative and it equals to zeros if and only if $a = b$. 
	This implies the desired result.
\end{proof}

We denote $X_{i+1,i}(t)=X_{i+1}(t)-X_i(t)$. Let $\tau_N$ be defined as in \eqref{def:tauN}.

\begin{Lem}\label{zIM_0}
	Suppose that Assumption \ref{Ass_2} holds.
	Let $p$ be a positive number satisfying $\frac{\gamma}{2\sigma_d^2} \geq \frac{p+1}{2-\ochi(d,p)}$.
	Suppose that $\e[X_{i+1,i}(0)^{-p}]<\infty$ for each $i=1,\ldots,d-1$.
	Then for any $T>0$, it holds that 
	\begin{align*}
		\sum_{i=1}^{d-1} \sup_{0\leq t \leq T}\e\left[ X_{i+1,i}(t \wedge \tau)^{-p} \right]
		\leq \Big(\sum_{i=1}^{d-1} \e[X_{i+1,i}(0)^{-p}]\Big) e^{pT\|b\|_{Lip}}.
	\end{align*}
	In particular, 
	$$\sup_{i < j}  \sup_{0\leq t \leq T} \e[X_{j,i}(t\wedge \tau)^{-p}] 	\leq \Big(\sum_{i=1}^{d-1} \e[X_{i+1,i}(0)^{-p}]\Big) e^{pT\|b\|_{Lip}}.$$
\end{Lem}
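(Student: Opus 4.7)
The plan is to mimic the proof of Lemma \ref{IM_0}, with the combinatorial Lemma \ref{Lem:aux_2} now playing the role previously played by the algebraic identity that bounded the cross terms in the full-interaction case. For $N > 0$ I would apply It\^o's formula to $X_{i+1,i}(t\wedge \tau_N)^{-p}$. From \eqref{repX} the drift of the gap $X_{i+1,i}$ equals
\[ \frac{2\gamma}{X_{i+1,i}} - \frac{\gamma}{X_{i+2,i+1}} - \frac{\gamma}{X_{i,i-1}} + \{b_{i+1}(X_{i+1}) - b_i(X_i)\} \]
in the bulk $2\le i\le d-2$, with the absent neighbour term suppressed at the endpoints $i=1$ and $i=d-1$. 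After the chain rule the drift of $X_{i+1,i}^{-p}$ splits into (i) the dissipative term $-2p\gamma/X_{i+1,i}^{p+2}$; (ii) the positive cross terms $p\gamma/(X_{i+1,i}^{p+1}X_{i+2,i+1})$ and $p\gamma/(X_{i+1,i}^{p+1}X_{i,i-1})$ coming from the nearest-neighbour repulsion; (iii) the $b$-term $-p\{b_{i+1}(X_{i+1}) - b_i(X_i)\}/X_{i+1,i}^{p+1}$; and (iv) the It\^o correction $\tfrac{p(p+1)}{2}X_{i+1,i}^{-p-2}\sum_k(\sigma_{i+1,k}(X) - \sigma_{i,k}(X))^2$, which by (A4) is at most $2p(p+1)\sigma_d^2/X_{i+1,i}^{p+2}$.

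Localization by $\tau_N$ makes the stochastic integrand bounded, so its expectation vanishes. Summing (i)--(iv) over $i=1,\ldots,d-1$ and re-indexing the ``lower'' cross sum via $j=i-1$, the total positive cross contribution is exactly
\[ p\gamma\sum_{i=1}^{d-2}\left\{\frac{1}{X_{i+1,i}^{p+1}X_{i+2,i+1}} + \frac{1}{X_{i+1,i}X_{i+2,i+1}^{p+1}}\right\}, \]
which is the left-hand side of Lemma \ref{Lem:aux_2} (applied with $X$ in place of $x$), and hence is bounded by $p\gamma\,\ochi(d,p) \sum_{i=1}^{d-1} X_{i+1,i}^{-p-2}$. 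Combining (i), (ii) and (iv) produces the coefficient $-p\{\gamma(2-\ochi(d,p)) - 2(p+1)\sigma_d^2\}$ in front of $\sum_{i} X_{i+1,i}^{-p-2}$, which is nonpositive precisely under the hypothesis $\gamma/(2\sigma_d^2) \ge (p+1)/(2-\ochi(d,p))$. For (iii), Assumption \ref{Ass_2} and Lipschitz continuity give $b_{i+1}(X_{i+1}) - b_i(X_i) \ge b_i(X_{i+1}) - b_i(X_i) \ge -\|b\|_{Lip}X_{i+1,i}$, so after division by $X_{i+1,i}^{p+1}$ the $b$-contribution is dominated by $p\|b\|_{Lip}/X_{i+1,i}^{p}$.

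Putting these together yields the integral inequality
\[ \sum_{i=1}^{d-1} \e[X_{i+1,i}(t\wedge \tau_N)^{-p}] \leq \sum_{i=1}^{d-1}\e[X_{i+1,i}(0)^{-p}] + p\|b\|_{Lip} \int_0^t \sum_{i=1}^{d-1} \e[X_{i+1,i}(s\wedge \tau_N)^{-p}]\,\mathrm{d}s, \]
to which Gronwall's inequality applies; Fatou's lemma as $N \to \infty$ then produces the stated bound. The ``in particular'' statement follows because $X_{j,i} \geq X_{i+1,i}$ for any $j>i$, so $X_{j,i}^{-p} \leq X_{i+1,i}^{-p}$. The main obstacle is the indexing bookkeeping: verifying that the positive cross contributions from It\^o's formula, after re-indexing and careful treatment of the endpoints $i=1$ and $i=d-1$, assemble into precisely the expression bounded by Lemma \ref{Lem:aux_2}, so that the non-positivity of the coefficient of $\sum X_{i+1,i}^{-p-2}$ reduces exactly to the hypothesis of the lemma. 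The rest is a routine Gronwall argument parallel to the proof of Lemma \ref{IM_0}.
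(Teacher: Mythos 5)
Your proposal is correct and follows essentially the same route as the paper: It\^o's formula applied to $X_{i+1,i}(t\wedge\tau_N)^{-p}$ (with the endpoint gaps $i=1$ and $i=d-1$ treated separately), re-indexing of the cross terms so that Lemma \ref{Lem:aux_2} absorbs them into the dissipative term, the monotonicity-plus-Lipschitz bound on the $b$-contribution exactly as in Lemma \ref{IM_0}, and Gronwall followed by $N\to\infty$. Your explicit bookkeeping of the It\^o correction, which yields the condition $\gamma(2-\ochi(d,p))\geq 2(p+1)\sigma_d^2$, in fact matches the hypothesis as stated in the lemma more cleanly than the paper's own proof, which cites the condition with a $d$ in place of the $2$.
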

\begin{proof}
	By using It\^o's formula, for each $2\leq i \leq d-2$, we have 
	\begin{align*}
		&	X_{i+1,i}(t \wedge \tau_N)^{-p}\\
		=&X_{i+1,i}(0)^{-p}
		+\int_{0}^{t \wedge \tau_N} \frac{p\gamma}{X_{i+1,i}(s)^{p+1}} \left\{
		\frac{-2}{X_{i+1,i}(s)}
		+  \frac{1}{X_{i+2,i+1}(s)} + \frac{1}{X_{i,i-1}(s)} \right\} \mathrm{d}s\\
		&-\int_{0}^{t \wedge \tau_N}
		\frac{p\{b_{i+1}(X_{i+1}(s))-b_i(X_i(s))\}}{X_{i+1,i}(s)^{p+1}}
		\mathrm{d}s \\
		&+\sum_{k=1}^{d}\int_{0}^{t \wedge \tau_N}
		\frac{p(p+1) |\sigma_{i+1,k}(X(s))-\sigma_{i,k}(X(s))|^2}{2X_{i+1,i}(s)^{p+2}} \mathrm{d}s\\
		&-\sum_{k=1}^{d}\int_{0}^{t \wedge \tau_N} \frac{p\left\{\sigma_{i+1,k}(X(s))-\sigma_{i,k}(X(s))\right\} }{X_{i+1,i}(s)^{p+1}}\mathrm{d}W_{k}(s).
	\end{align*}
	In addition, we have 
	\begin{align*}
		&	X_{2,1}(t \wedge \tau_N)^{-p}\\
		=&X_{2,1}(0)^{-p}
		+\int_{0}^{t \wedge \tau_N} \frac{p\gamma}{X_{2,1}(s)^{p+1}} \left\{ \frac{-2}{X_{2,1}(s)}
		+  \frac{1}{X_{3,2}(s)}  \right\} \mathrm{d}s\\
		&-\int_{0}^{t \wedge \tau_N}
		\frac{p\{b_{2}(X_{2}(s))-b_1(X_1(s))\}}{X_{2,1}(s)^{p+1}}
		\mathrm{d}s\\
		&+\sum_{k=1}^{d}\int_{0}^{t \wedge \tau_N}
		\frac{p(p+1) |\sigma_{2,k}(X(s))-\sigma_{1,k}(X(s))|^2}{2X_{2,1}(s)^{p+2}} \mathrm{d}s\\
		&-\sum_{k=1}^{d}\int_{0}^{t \wedge \tau_N} \frac{p\left\{\sigma_{2,k}(X(s))-\sigma_{1,k}(X(s))\right\} }{X_{2,1}(s)^{p+1}}\mathrm{d}W_{k}(s),
	\end{align*}
	and
	\begin{align*}
		&	X_{d,d-1}(t \wedge \tau_N)^{-p}\\
		=&X_{d,d-1}(0)^{-p}
		+\int_{0}^{t \wedge \tau_N} \frac{p\gamma}{X_{d,d-1}(s)^{p+1}} \left\{
		\frac{-2}{X_{d,d-1}(s)}
		+ \frac{1}{X_{d-1,d-2}(s)} \right\} \mathrm{d}s\\
		&-\int_{0}^{t \wedge \tau_N}
		\frac{p\{b_{d}(X_{d}(s))-b_{d-1}(X_{d-1}(s))\}}{X_{d,d-1}(s)^{p+1}}
		\mathrm{d}s\\
		&+\sum_{k=1}^{d}\int_{0}^{t \wedge \tau_N}
		\frac{p(p+1) |\sigma_{d,k}(X(s))-\sigma_{d-1,k}(X(s))|^2}{2X_{d,d-1}(s)^{p+2}} \mathrm{d}s\\
		&-\sum_{k=1}^{d}\int_{0}^{t \wedge \tau_N} \frac{p\left\{\sigma_{d,k}(X(s))-\sigma_{d-1,k}(X(s))\right\} }{X_{d,d-1}(s)^{p+1}}\mathrm{d}W_{k}(s).
	\end{align*}
	Since $\frac{\gamma}{d\sigma_d^2} \geq \frac{p+1}{2-\ochi(d,p)}$, by applying Lemma \ref{Lem:aux_2} we obtain
	\begin{align*}
		&\sum_{i=1}^{d-1} \e[X_{i+1,i}(t \wedge \tau_N)^{-p}]\\
		&\leq \sum_{i=1}^{d-1} \e[X_{i+1,i}(0)^{-p}] 
		-  \sum_{i=1}^{d-1}\e\left[\int_{0}^{t\wedge \tau_N}
		\frac{p\{b_{i+1}(X_{i+1}(s))-b_{i}(X_i(s))\}}{X_{i+1,i}(s)^{p+1}}
		\mathrm{d}s\right].
	\end{align*}
	The proof is concluded by following the same argument as in the proof of Lemma \ref{IM_0}. 
\end{proof}

By using Lemma \ref{zIM_0} and adapting the argument of the previous sections, we can show the following result.
\begin{Thm} \label{zmomen_Hol_0x}
	Suppose that Assumption \ref{Ass_2} holds.
	Assume that there exists constant $p\geq 1$ such that $\frac{\gamma}{2\sigma_d^2} \geq \frac{p+1}{2-\ochi(d,p)}$, $\e[|X(0)|^p] < \infty$ and $\e[X_{i+1,i}(0)^{-p}]<\infty$ for each $i=1,\ldots,d-1$.
	Then the equation \eqref{repX} has a unique strong solution $X(t)$ such that $X(t) \in \Delta_d$ almost surely for all $t>0$.
	Moreover, Hypothesis \ref{Ip} holds for $\hat{p} = p$.
\end{Thm}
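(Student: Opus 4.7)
The plan is to mirror the three-step strategy used for Theorem \ref{momen_Hol_0x}, replacing Lemma \ref{IM_0} by its nearest-neighbour counterpart Lemma \ref{zIM_0} (already proven above). The only structural difference between \eqref{def:ibp} and \eqref{repX} is that the singular drift of $X_i$ now involves at most two terms $\gamma/X_{i+1,i}$ and $\gamma/X_{i,i-1}$ rather than a full sum over $j\ne i$. Consequently, every singular quantity appearing in the forthcoming moment estimates is of the form $X_{i+1,i}^{-p}$ for some $i$, which is exactly what Lemma \ref{zIM_0} controls under the standing hypothesis $\tfrac{\gamma}{2\sigma_d^2}\ge\tfrac{p+1}{2-\ochi(d,p)}$.

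First I would establish the analogue of Lemma \ref{momen_Hol_0}, namely
\[
\sup_{0\le t\le T}\sup_{1\le i\le d}\e[|X_i(t\wedge\tau)|^{p}]<\infty.
\]
Starting from the integral form of \eqref{repX} stopped at $\tau_N$ defined by \eqref{def:tauN}, I apply $(a_1+\cdots+a_m)^{p}\le m^{p-1}\sum a_j^{p}$, take expectation, and use: (i) the Lipschitz bound on $b_i$ for the regular drift term (producing a Gronwall term in $\e[|X_i|^p]$); (ii) Lemma \ref{zIM_0} to bound $\e\int_0^{t\wedge\tau_N}|X_{i+1,i}(s)|^{-p}\rd s$ and the corresponding neighbour term uniformly in $N$; and (iii) Burkholder-Davis-Gundy together with the uniform bound $\sigma_d^2<\infty$ for the martingale part. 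Gronwall's inequality then yields a bound independent of $N$, and $N\to\infty$ gives the claim. Combining this with Lemma \ref{zIM_0} applied at $p=1$, a standard localisation argument (the event $\{\tau<\infty\}$ would force either $\min_i X_{i+1,i}$ to vanish or $\max_i|X_i|$ to blow up in finite time, both of which are ruled out by integrability) shows $\tau=\infty$ a.s., yielding global existence and uniqueness in $\Delta_d$.

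To finish verifying Hypothesis \ref{Ip} with $\hat p=p$, it remains to prove the Kolmogorov-type estimate
\[
\sup_{i=1,\ldots,d}\e[|X_i(t)-X_i(s)|^{p}]\le C(t-s)^{p/2},\qquad 0\le s<t\le T.
\]
This follows exactly as in the derivation of \eqref{momen_Hol_2}: split $X_i(t)-X_i(s)$ into its Lipschitz drift, its (at most two) singular drift terms, and its stochastic integral; use $(a_1+\cdots+a_m)^{p}\le m^{p-1}\sum a_j^p$ and H\"older in time; and control each piece by, respectively, the moment bound on $|X_i|^p$, Lemma \ref{zIM_0}, and Burkholder-Davis-Gundy with bounded $\sigma_{i,j}$. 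The martingale part contributes the dominant $(t-s)^{p/2}$ rate while the drift contributions are of order $(t-s)^{p}$, giving the desired estimate.

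The main obstacle is conceptually mild but technically delicate: one must check that the quantitative condition $\tfrac{\gamma}{2\sigma_d^2}\ge\tfrac{p+1}{2-\ochi(d,p)}$ of Lemma \ref{zIM_0} is actually the sharpest one available, so that the It\^o calculation for $X_{i+1,i}^{-p}$ in the nearest-neighbour case closes. This is where Lemma \ref{Lem:aux_2}, absorbing the cross terms $1/(X_{i+2,i+1}X_{i+1,i}^{p+1})$ into a multiple of $\sum_i X_{i+1,i}^{-(p+2)}$ with constant $\ochi(d,p)<2$, plays the role that Lemma \ref{Lem:aux_1} played in the proof of Theorem \ref{momen_Hol_0x}; the rest of the argument is a faithful transcription of that proof with all non-neighbour indices removed.
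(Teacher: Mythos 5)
Your proposal is correct and follows exactly the route the paper intends: the paper's own ``proof'' of this theorem is a one-line instruction to substitute Lemma \ref{zIM_0} for Lemma \ref{IM_0} and repeat the arguments of Lemma \ref{momen_Hol_0} and Theorem \ref{momen_Hol_0x}, which is precisely the three-step adaptation (inverse moments, positive moments via Gronwall, Kolmogorov estimate, then $\tau=\infty$ by localisation) that you spell out. The only cosmetic remark is that for the non-explosion step it is cleaner to note that the exponent-$p$ bounds with $p\ge 1$ already imply the exponent-$1$ bounds by Jensen, rather than re-invoking Lemma \ref{zIM_0} at $p=1$, since the hypothesis $\frac{\gamma}{2\sigma_d^2}\ge\frac{p+1}{2-\ochi(d,p)}$ is only assumed for the given $p$.
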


\section{Numerical approximation for system of equation \eqref{eqn_xi}} \label{Sec:4}

In this section, we discuss how to approximate the solution of the system of equations \eqref{eqn_xi}. 
Denote $x_i = \xi_{i+1} - \xi_i$ and $a_i = b_{i+1} - b_i$. We can rewrite \eqref{eqn_xi} as the following system of equations of variables $x_i, 1\leq i \leq d-1$,
\begin{align}\label{L0}
	x_i = a_i + &\frac{2c_{i,i+1}}{x_i} + \sum_{j< i} \Big( \frac{c_{i+1,j}}{x_j + \ldots + x_i} -  \frac{c_{i,j}}{x_j + \ldots + x_{i-1}}\Big) \\
	&- \sum_{j> i+1} \Big( \frac{c_{i+1,j}} {x_{i+1}+\ldots + x_{j-1}} - \frac{c_{i,j}}{x_{i} + \ldots + x_{j-1}}\Big). \notag
\end{align}
It is clear that there is an one-to-one correspondence between $(\xi_i)_{1 \leq i \leq d}$ and $(x_i)_{1 \leq i \leq d-1}$. Note that since both systems \eqref{eqn_xi} and  \eqref{L0} are highly non-linear and very stiff, it is very hard to find an effective numerical approximation scheme for them in the general case. In the following we will construct some iterative schemes for the system of equations \eqref{L0} and show their convergence in some particular cases. 

We first consider the case that $c_{i,j} = 0$ for all $i,j$ satisfying $|i-j| \geq 2$, which  corresponds to the system of Brownian particles with nearest neighbor repulsion. We denote $c_{i,i+1} = c_i$ and $k=d-1$ for the sake of simplicity.

\begin{Prop} \label{iter1}
	Let $a  = (a_1, \ldots, a_k) \in \mathbb{R}^k$ and  $c_i>0$ for all $i=1,\ldots, k$. The following system of equations 
	\begin{equation} \label{L1}
		\begin{cases}
			x_1 - \frac{2c_{1}}{x_1} = a_1 - \frac{c_{2}}{x_2} \\
			x_i - \frac{2c_{i}}{x_i} = a_i - \frac{c_{i-1}}{ x_{i-1}} - \frac{c_{i+1}}{x_{i +1}} , \quad i=2, \ldots, k-1,\\
			x_k - \frac{2c_{k}}{x_k} = a_k - \frac{c_{k-1}}{x_{k-1}}
		\end{cases}
	\end{equation}
	has a unique solution $(x_1^*, \ldots, x_k^*) \in \mathbb{R}_+^k$. Moreover, if we consider the sequence
	\begin{align*}
		& x^{(0)}_i = \frac{1}{2}( a_i + \sqrt{a_i^2 + 8c_i}), \quad i = 1, \ldots, k\\
		& \begin{cases} 
			x^{(n+1)}_1 = \dfrac{1}{2} \Big( a_1 - \frac{c_2}{x^{(n)}_2} + \sqrt{\Big(a_1 - \frac{c_2}{x^{(n)}_2 }\Big)^2+8c_1}\Big)\\
			x^{(n+1)}_i = \dfrac{1}{2} \Big( a_i - \frac{c_{i-1}}{x^{(n)}_{i-1}}-\frac{c_{i+1}}{x^{(n)}_{i+1}}  + \sqrt{\Big(a_i - \frac{c_{i-1}}{x^{(n)}_{i-1}}-\frac{c_{i+1}}{x^{(n)}_{i+1}} \Big)^2 + 8c_i}\Big),  i=2,\ldots, k-1, n \geq 0.\\ 
			x^{(n+1)}_k=  \dfrac{1}{2} \Big( a_k - \frac{c_{k-1}}{x^{(n)}_{k-1}} + \sqrt{\Big(a_k - \frac{c_{k-1}}{x^{(n)}_{k-1}} \Big)^2+8c_k}\Big)
		\end{cases}
	\end{align*} 
	Then for each $i=1, \ldots, k$, the sequence $x^{(n)}_i$ decreases to $x^*_i$ as $n$ tends to infinity.
\end{Prop}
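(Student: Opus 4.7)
The plan is to reduce the existence and uniqueness statement to Proposition \ref{IEM_sol}, and then to establish monotone convergence of the iteration by exploiting the fact that the recursion is component-wise monotone in its inputs.

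For existence and uniqueness, given $a=(a_1,\ldots,a_k)$, set $b_1 = 0$, $b_{i+1} = b_i + a_i$, and define $c_{i,j} := c_{\min(i,j)}$ if $|i-j|=1$ and $c_{i,j} := 0$ otherwise. Proposition \ref{IEM_sol} with $d=k+1$ yields a unique $\xi^*\in\Delta_{k+1}$ solving \eqref{eqn_xi}, and $x^*_i := \xi^*_{i+1} - \xi^*_i$ provides a positive solution to \eqref{L1} (obtained by subtracting consecutive equations of \eqref{eqn_xi}). For uniqueness, any positive $y$ solving \eqref{L1} can be lifted to a solution $\eta$ of \eqref{eqn_xi} by choosing $\eta_1 := b_1 - c_1/y_1$ (which makes the first equation of \eqref{eqn_xi} hold) and $\eta_{i+1} := \eta_i + y_i$; the remaining equations of \eqref{eqn_xi} then follow from \eqref{L1}. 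Proposition \ref{IEM_sol} forces $\eta = \xi^*$, hence $y = x^*$.

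For the iteration itself, write $x^{(n+1)}_i = F_i(g_i^{(n)})$, where $F_i(y) := \tfrac{1}{2}\bigl(y + \sqrt{y^2 + 8c_i}\bigr)$ is the unique positive root of $x^2 - yx - 2c_i = 0$, and $g_i^{(n)} := a_i - c_{i-1}/x^{(n)}_{i-1} - c_{i+1}/x^{(n)}_{i+1}$ with the convention that the missing nearest-neighbour term is dropped at $i=1$ and $i=k$. Two monotonicity facts drive the rest: $F_i$ is strictly increasing on $\mathbb{R}$, and $g_i^{(n)}$ is increasing in each positive input $x^{(n)}_{i\pm 1}$ because $x\mapsto -1/x$ is increasing on $(0,\infty)$ and $c_{i\pm 1}>0$. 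Note also that $x^*$ is a fixed point: $x^*_i = F_i(g_i^*)$.

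I would then establish two inductive statements in parallel. First, $x^{(n)}_i \geq x^*_i$ for all $n$ and $i$: the base case $x^{(0)}_i = F_i(a_i) \geq F_i(a_i - c_{i-1}/x^*_{i-1} - c_{i+1}/x^*_{i+1}) = x^*_i$ uses only that the subtracted terms are non-negative, and the inductive step uses the two monotonicity facts. Second, $x^{(n+1)}_i \leq x^{(n)}_i$ for all $n$ and $i$: the base case $x^{(1)}_i \leq x^{(0)}_i$ again uses non-negativity of the subtracted terms, and the inductive step applies the monotonicity facts to $x^{(n)}_{i\pm 1} \leq x^{(n-1)}_{i\pm 1}$. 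Combining these, each $(x^{(n)}_i)_{n\geq 0}$ is decreasing and bounded below by $x^*_i>0$, hence converges to some $y^*_i \geq x^*_i > 0$. The strict positivity ensures that $F_i$ and $g_i$ are continuous at the limit point, so $y^*$ is a fixed point of the iteration and therefore solves \eqref{L1}; by the uniqueness already shown, $y^* = x^*$. The main obstacle is the lower bound $x^{(n)}_i\geq x^*_i$: without anchoring the induction by the already-established existence of $x^*$, one cannot rule out that some coordinate collapses to $0$, at which point neighbouring iterates would become undefined. Existence of $x^*$ thus plays the double role of providing both the candidate limit and the lower bound that keeps the iteration well-behaved.
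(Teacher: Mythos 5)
Your proof is correct and follows essentially the same route as the paper: existence and uniqueness via Proposition \ref{IEM_sol} (through the correspondence $x_i=\xi_{i+1}-\xi_i$), and monotone decrease of the iterates via the strict monotonicity of $x\mapsto x-\frac{2c}{x}$ on $(0,\infty)$, equivalently of its inverse $F_i$. The one genuine addition is your inductive lower bound $x^{(n)}_i\ge x^*_i$, which the paper's proof omits but which is needed to rule out a coordinate collapsing to $0$ and to justify passing to the limit in the recursion to identify the limit with $x^*$; on this point your argument is the more complete of the two.
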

\begin{proof}
	The existence and uniqueness of solution of \eqref{L1} is a direct consequence of Proposition \ref{IEM_sol}. It is clear that 
	$x_i^{(0)} - \frac{2c_i}{x_i^{(0)}} = a_i,$
	and 
	$$\begin{cases}
	x^{(n+1)}_1 - \frac{2c_1}{x^{(n+1)}_1} = a_1 - \frac{c_2}{x^{(n)}_2} \\
	x^{(n+1)}_i - \frac{2c_i}{x^{(n+1)}_i} = a_i - \frac{c_{i-1}}{ x^{(n)}_{i-1}} - \frac{c_{i+1}}{x^{(n)}_{i +1}} , \quad i=2, \ldots, k-1\\
	x^{(n+1)}_k - \frac{2c_k}{x^{(n+1)}_k} = a_k - \frac{c_{k-1}}{x^{(n)}_{k-1}}.
	\end{cases}$$
	Note that if $c>0$ then the mapping $x\mapsto x - \frac{2c}{x}$ is strictly increasing on $(0,+\infty)$. 
	Since $x^{(n+1)}_i - \frac{2c_i}{x^{(n+1)}_i} < a_i = x_i^{(0)} - \frac{2c_i}{x_i^{(0)}}$, we have   
	$x^{(n+1)}_i < x^{(0)}_i$ for all $n \geq 0, 1\leq i \leq k$. In particular, we have $x^{(1)}_i < x^{(0)}_i$. Using the induction method, we obtain that for each $i$, the sequence $(x^{(n)}_i)_{n\geq 0}$ is a decreasing sequence of positive numbers. Indeed, suppose that $x^{(n+1)}_i < x^{(n)}_i$ for all $i = 1, 2, \ldots, k$. Then for any $i=2, \ldots, k-1$, it holds
	\begin{align*}
		&\left(x_i^{(n+1)} - \frac{2c_i}{x_i^{(n+1)} }\right)  - \left( x_i^{(n+2)} - \frac{2c_i}{x_i^{(n+2)} }\right)  \\
		&= c_{i-1} \left( \frac{1}{x^{(n+1)}_{i-1}} - \frac{1}{x^{(n)}_{i-1}}\right) + c_{i+1} \left(\frac{1}{x^{(n+1)}_{i+1}} - \frac{1}{x^{(n)}_{i+1}}\right) > 0,
	\end{align*}
	which implies $x_i^{(n+1)} > x_i^{(n+2)}$ for $i=2,\cdots, k-1.$ A similar argument yields that $x_i^{(n+1)} > x_i^{(n+2)}$ for $i=1$ and $i=k$ as well. 
	Therefore, for each $i$, sequence $(x^{(n)}_i)_{n\geq 0}$ converges to the desired limits $x^*_i$.
\end{proof}

Next we consider the system \eqref{eqn_xi} when $d= 3$.
\begin{Prop} \label{iter2}
	Let $a, b \in \mathbb{R}$. The following system of equations 
	$$\begin{cases}
	x - \frac 2x = a - \frac 1y + \frac{1}{x+y}\\ 
	y - \frac 2y = b - \frac 1x + \frac{1}{x+y}\\
	\end{cases}$$
	has a unique solution $(x^*,y^*) \in \real_+^2$. Moreover, if we consider the sequence 
	$$x_1 = \frac 12 (a + \sqrt{a^2+8}), \quad  y_1 = \frac{1}{2}\Big( b - \frac{|a|+\sqrt{2}}{2} + \sqrt{\big(b - \frac{|a|+\sqrt{2}}{2}\big)^2 + 6}\Big),$$
	and 
	$$\begin{cases} 
	x_{n+1} = \frac 12 \Big( a - \frac{1}{y_n} + \frac{1}{x_n + y_n} + \sqrt{ \big(a - \frac{1}{y_n} + \frac{1}{x_n + y_n}\big)^2 + 8}\Big),\\
	y_{n+1} = \frac 12 \Big( b - \frac{1}{x_n} + \frac{1}{x_n + y_n} + \sqrt{ \big(b - \frac{1}{x_n} + \frac{1}{x_n + y_n}\big)^2 + 8}\Big)
	\end{cases}$$
	Then
	$$\lim {x_n} = x^* \text{ and } \quad \lim y_n = y^*.$$
\end{Prop}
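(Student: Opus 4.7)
The plan is to deduce existence and uniqueness of $(x^*,y^*) \in \real_+^2$ directly from Proposition \ref{IEM_sol} (with $d=3$, via the change of variables $(\xi_1,\xi_2,\xi_3) \mapsto (\xi_2-\xi_1, \xi_3-\xi_2)$), and then to analyse the Jacobi-type iteration $(x_{n+1},y_{n+1}) = \Phi(x_n,y_n)$ as a strict contraction on a carefully chosen invariant compact rectangle $K \subset \real_+^2$ containing both $(x_1,y_1)$ and $(x^*,y^*)$.

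First I would rewrite both updates through the function $f(c) := \tfrac12\bigl(c + \sqrt{c^2+8}\bigr)$, which is the unique positive inverse of $h(x) = x - 2/x$ on $(0,\infty)$ and satisfies $f'(c) \in (0,1)$ for every $c \in \real$. With $A(u,v) := a - 1/v + 1/(u+v)$ and $B(u,v) := b - 1/u + 1/(u+v)$, the recursion becomes $x_{n+1} = f(A(x_n,y_n))$ and $y_{n+1} = f(B(x_n,y_n))$, and Proposition \ref{IEM_sol} identifies the unique fixed point of $\Phi := (f\circ A,\, f\circ B)$ in $\real_+^2$ with $(x^*,y^*)$.

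Next I would build $K = [\alpha,U_x] \times [\beta,U_y]$. Upper bounds are immediate: $A \leq a$ and $B \leq b$ pointwise, so monotonicity of $f$ yields $x_n \leq f(a) = x_1$ and $y_n \leq f(b)$ for $n \geq 2$. For lower bounds the key input is the elementary estimate $1/x_1 \leq (|a|+\sqrt{2})/2$, which I would verify by splitting into the cases $a \geq 0$ (where $x_1 \geq \sqrt{2}$) and $a < 0$ (where $h(x_1)=a$ forces $1/x_1 = (x_1+|a|)/2 < (\sqrt{2}+|a|)/2$). This estimate is exactly the reason for the unusual shift in the definition of $y_1$: it forces $B_1 \geq b - (|a|+\sqrt{2})/2$ and hence yields an explicit positive lower bound on $y_2$. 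The lower bound can then be propagated by induction, since $x_n, y_n \geq \epsilon$ implies $A_n, B_n \geq \min(a,b) - 1/\epsilon$, and any $\epsilon > 0$ satisfying $\epsilon^2 - \epsilon \min(a,b) \leq 1$ also satisfies $f(\min(a,b) - 1/\epsilon) \geq \epsilon$. On $K$ the map $\Phi$ will be a strict contraction: using $|f'|<1$ together with the bounds $|\partial_u A|, |\partial_v A|, |\partial_u B|, |\partial_v B| \leq C(\alpha,\beta)$ valid on $K$, the Jacobian of $\Phi$ has operator norm $\rho = \rho(a,b,\alpha,\beta) < 1$ provided $\alpha,\beta$ are not too small, so that $\|(x_n,y_n) - (x^*,y^*)\| \leq \rho^{n-1}\|(x_1,y_1)-(x^*,y^*)\|$.

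The hard part will be producing $\alpha,\beta>0$ that are simultaneously small enough to include $(x_1,y_1)$ in $K$ and large enough to drive $\rho$ strictly below $1$. This will likely require a two-step bootstrap in which a rough lower bound is first established from the explicit shift in $y_1$ and then improved using the update formulas; the bespoke form of $y_1$ in the statement appears engineered precisely to make this bootstrap go through cleanly. Once $K$ is fixed, the remaining estimates reduce to routine calculus on $f$ and on the rational functions $A$ and $B$.
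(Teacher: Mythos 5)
Your reduction of existence and uniqueness to Proposition \ref{IEM_sol} is exactly what the paper does, and your observation that $y_1$ is engineered around the bound $1/x_1\leq(|a|+\sqrt2)/2$ is on target (the paper uses precisely $\sqrt{c^2+8}\leq|c|+\sqrt8$ at that spot). The problem is the convergence mechanism. The paper does \emph{not} prove a contraction; it proves the interlacing monotonicity $x_1>x_3>x_5>\cdots$, $x_2<x_4<\cdots$, $y_1<y_3<\cdots$, $y_2>y_4>\cdots$ (seeded by showing $y_1<y_3$, which is the sole purpose of the odd definition of $y_1$), extracts the four monotone subsequential limits $\hat x_1,\hat x_2,\hat y_1,\hat y_2$, and kills the possible period-two cycle by an algebraic identity. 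Your contraction plan, as stated, has a genuine gap that no choice of $\alpha,\beta$ repairs.

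Concretely, write $\Phi=(f\circ A,f\circ B)$ with $f'(c)=\frac{f(c)^2}{f(c)^2+2}$ and note $\partial_v A=1/v^2-1/(u+v)^2$. Take $a=10$, $b=-10$: the fixed point is roughly $x^*\approx5.4$, $y^*\approx0.2$, so at $(x^*,y^*)$ one has $f'(A)\approx0.94$ and $\partial_vA\approx1/y^{*2}\approx26$, hence $\partial x_{n+1}/\partial y_n\approx24$. Since any invariant rectangle $K$ used in a fixed-point argument must contain $(x^*,y^*)$, the operator norm of $D\Phi$ on $K$ in the Euclidean or sup norm is forced to be of order $1/\beta^2\gg1$; the constraint ``$\alpha,\beta$ not too small'' is incompatible with ``$(x^*,y^*)\in K$''. (The spectral radius of $D\Phi$ at the fixed point does appear to be below $1$ --- the large off-diagonal entry is compensated by the tiny entry $\partial y_{n+1}/\partial x_n\approx f'(B)/x^{*2}$ --- so a contraction proof is conceivable, but it would need a weighted norm $|u|+\lambda|v|$ with $\lambda=\lambda(a,b)$ chosen small, a uniform spectral estimate over all of $K$, and a completed version of the lower-bound bootstrap you only sketch. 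None of this is in the proposal.) As written, the argument would fail for $a$ large positive and $b$ large negative; either supply the weighted-norm analysis or switch to the monotone-subsequence route.
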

\begin{proof}
	Step 1: It is clear that 
	$$\begin{cases} 
	x_{n+1} - \frac {2}{x_{n+1}} = a - \frac {1}{y_n} + \frac{1}{x_n+y_n}\\ 
	y_{n+1} - \frac {2}{y_{n+1}} = b - \frac {1}{x_n} + \frac{1}{x_n+y_n}.
	\end{cases}$$
	Since $x_{n+1} - \frac {2}{x_{n+1}} < a $ then $x_{n+1} < \frac 12 \big( \sqrt{a^2+8} + a\big)$ for all $n \geq 1$. Similarly, $y_{n+1} < \frac 12 \big( \sqrt{b^2+8} + b\big)$ for all $n \geq 1$. 
	
	Step 2: We show that $y_1 < y_3$. Indeed,  we have 
	\begin{align*}
		y_3 - \frac{2}{y_3 } &> b - \frac{1}{x_2} \\
		&= b - \frac 14 \Big( \sqrt{\big(a - \frac{1}{y_1} + \frac{1}{x_1+y_1}\big)^2 + 8} - a + \frac{1}{y_1} - \frac{1}{x_1+y_1}\Big).
	\end{align*}
	Applying the simple estimate, $\sqrt{a^2 +b^2} \leq |a| + |b|$, we get 
	\begin{align*}
		\sqrt{\big(a - \frac{1}{y_1} + \frac{1}{x_1+y_1}\big)^2 + 8}  &\leq |a| + \frac{1}{y_1}+ \frac{1}{x_1+y_1} + \sqrt{8} . 
	\end{align*}
	Therefore 
	\begin{align*}
		y_3 - \frac{2}{y_3} > b - \frac{1}{2y_1} - \frac{|a|+\sqrt{2}}{2}  = y_1 - \frac{2}{y_1}.
	\end{align*}
	This implies $y_3 > y_1$. 
	
	Step 3: 
	Since the function $x \mapsto \frac 12 (x + \sqrt{x^2+8})$ is strictly increasing on $\mathbb{R}$, for any $k\geq 0$, we have the following relation.
	\begin{align}
		x_{k+2} < x_{k+4} &\Leftrightarrow \frac{-1}{y_{k+1}} + \frac{1}{x_{k+1}+y_{k+1}} <  \frac{-1}{y_{k+3}} + \frac{1}{x_{k+3}+y_{k+3}} \notag\\
		&\Leftrightarrow y_{k+1} + \frac{y_{k+1}^2}{x_{k+1}} < y_{k+3} + \frac{y_{k+3}^2}{x_{k+3}}. \label{app1}
	\end{align}
	Similarly, for any $k\geq 0$, we have 
	\begin{align} \label{app2}
		y_{k+2} > y_{k+4} \Leftrightarrow 
		x_{k+3} + \frac{x_{k+3}^2}{y_{k+3}} < x_{k+1}+ \frac{x_{k+1}^2}{y_{k+1}}.
	\end{align}
	Since $x_1> x_3>0$ and $y_3 > y_1>0$, it follows from the relations \eqref{app1} and \eqref{app2} with $k=0$ that $x_2< x_4$ and $y_2> y_4$. Using the relations \eqref{app1} and \eqref{app2} again with $k=1$ yields $x_3> x_5$ and $y_3< y_5$. By repeating this argument, we get 
	\begin{equation} \label{thutu}
		\begin{cases} &y_1 < y_3 < y_5 < y_7 <  \ldots \\
			& y_2 > y_4 > y_6 > y_8 > \ldots\\
			& x_1 > x_3 > x_5 > x_7 > \ldots\\
			& x_2 < x_4 < x_ 6 < x_8 < \ldots 
		\end{cases}
	\end{equation}

	
	Step 4: It follows from Step 1 and \eqref{thutu} that the sequences $(y_{2k+1}), (y_{2k}), (x_{2k+1}), (x_{2k})$ converge to non-negative constants $\hat{y}_1, \hat{y}_2, \hat{x}_1, \hat{x}_2$, respectively. Moreover, $\hat{y}_1, \hat{y}_2, \hat{x}_1, \hat{x}_2$ satisfy
	$$\begin{cases} 
	\hat{y}_1 - \frac{2}{\hat{y}_1}= b - \frac{1}{\hat{x}_2}+ \frac{1}{\hat{x}_2 + \hat{y}_2}\\
	\hat{y}_2 - \frac{2}{\hat{y_2}}= b - \frac{1}{\hat{x}_1}+ \frac{1}{\hat{x}_1 + \hat{y}_1}\\
	\hat{x}_1 - \frac{2}{\hat{x}_1}= a - \frac{1}{\hat{y}_2}+ \frac{1}{\hat{x}_2 + \hat{y}_2}\\
	\hat{x}_2 - \frac{2}{\hat{x}_2}= a - \frac{1}{\hat{y}_1}+ \frac{1}{\hat{x}_1 + \hat{y}_1}.
	\end{cases}$$
	The first two equations imply
	$$(\hat{y}_2 - \hat{y}_1) \Big( 1 + \frac{2}{\hat{y}_1 \hat{y}_2} - \frac{1}{(\hat{x}_1+\hat{y}_1)(\hat{x}_2+\hat{y}_2)}\Big) = (\hat{x}_2 - \hat{x}_1) \Big(\frac{1}{\hat{x}_1\hat{x}_2} - \frac{1}{(\hat{x}_1+\hat{y}_1)(\hat{x}_2+\hat{y}_2)}\Big) ,$$
	while the last two equations imply
	$$(\hat{x}_2 - \hat{x}_1) \Big( 1 + \frac{2}{\hat{x}_1 \hat{x}_2} - \frac{1}{(\hat{x}_1+\hat{y}_1)(\hat{x}_2+\hat{y}_2)}\Big) = (\hat{y}_2 - \hat{y}_1) \Big(\frac{1}{\hat{y}_1\hat{y}_2} - \frac{1}{(\hat{x}_1+\hat{y}_1)(\hat{x}_2+\hat{y}_2)}\Big).$$
	We show that  $\hat{x}_1 = \hat{x}_2$ and  $\hat{y}_1 = \hat{y_2}$ by contradiction method. Indeed,  suppose that  $\hat{x}_1 \not = \hat{x}_2$ then $\hat{y}_1 \not = \hat{y_2}$ and  
	\begin{align*}
		&\Big( 1 + \frac{2}{\hat{y}_1 \hat{y}_2} - \frac{1}{(\hat{x}_1+\hat{y}_1)(\hat{x}_2+\hat{y}_2)}\Big)  \Big( 1 + \frac{2}{\hat{x}_1 \hat{x}_2} - \frac{1}{(\hat{x}_1+\hat{y}_1)(\hat{x}_2+\hat{y}_2)}\Big) \\
		&\quad = \Big(\frac{1}{\hat{x}_1\hat{x}_2} - \frac{1}{(\hat{x}_1+\hat{y}_1)(\hat{x}_2+\hat{y}_2)}\Big)  \Big(\frac{1}{\hat{y}_1\hat{y}_2} - \frac{1}{(\hat{x}_1+\hat{y}_1)(\hat{x}_2+\hat{y}_2)}\Big) .
	\end{align*}
	This is not true since the term on left hand side is alway strictly greater than the one on right hand side. It means that  $\hat{x}_1  = \hat{x}_2$ and  $\hat{y}_1 = \hat{y_2}$.
\end{proof}
\begin{Rem}
	The iterative method in Proposition \ref{iter2} could be generalized to the case that $d\geq 3$ in a straightforward way. However, our simulation shows that that scheme may not converge when $d \geq 4$.  
\end{Rem}

\section*{Acknowledgements}

This work was supported by JSPS KAKENHI Grant Number 16J00894 and 17H06833, and by the
Vietnamese National Foundation for Science and Technology Development
(NAFOSTED) Grant Number 101.03-2017.308.
Part of this work was  carried out while H.N. was visiting Ritsumeikan University and  Vietnam Institute for Advanced Study in Mathematics (VIASM).
He would like to thank these institutes for their support.

\end{document}